\tikzstyle arrowstyle=[scale=1]
\tikzstyle directed=[postaction={decorate,decoration={markings,
	mark=at position 0.5 with {\arrow[arrowstyle]{stealth}}}}]
\tikzstyle reverse directed=[postaction={decorate,decoration={
	markings,mark=at position 0.5 with {\arrowreversed[arrowstyle]
	{stealth}}}}]
\newtheorem{defn}{{\bf Definition}}[section]
\newtheorem{eg}[defn]{{\bf Example}}
\newtheorem{lemma}[defn]{{\bf Lemma}}
\newtheorem{prop}[defn]{{\bf Proposition}}
\newtheorem{theo}[defn]{{\bf Theorem}}
\newtheorem{cor}[defn]{{\bf Corollary}}
\newcommand{\Kfour}{{\mathcal{K}}(4)}
\newcommand{\Kd}{{\mathcal{K}}(d)}
\newcommand{\lKd}{\overline{\mathcal{K}}(d)}
\newcommand{\lKfive}{\overline{\mathcal{K}}(5)}
\newcommand{\skel}[2]{\mathrm{skel}_{#1}(#2)}
\newcommand{\lk}[2]{\mathrm{lk}_{#1}(#2)}
\newcommand{\Kdn}{{\mathcal K}^\ast(d)}
\newcommand{\lKdn}{{\overline{\mathcal K}}^{\,\ast}(d)}
\newcommand{\Kfourn}{{\mathcal K}^\ast(4)}
\newcommand{\lKfiven}{{\overline{\mathcal K}}^{\,\ast}(5)}
\newcommand{\st}[2]{\mathrm{st}_{#1}(#2)}
\newcommand{\ovr}{\overrightarrow}
\newcommand{\bkt}[1]{[#1]}
\newcommand{\TPSS}{S^{\hspace{.2mm}3}\! \times \hspace{-3.3mm}_{-} \,
S^{\hspace{.1mm}1}}
\newcommand{\TPSSDS}{S^{\hspace{.2mm}d-1}\! \times \hspace{-3.3mm}_{-} \, S^{\hspace{.1mm}1}}
\newcommand{\TPS}{S^{\hspace{.2mm}3}\! \times \hspace{-5.50mm}_{-} \, S^{\hspace{.1mm}1}}
\title{Minimal triangulations of $(S^3\times S^1)^{\#3}$ and
$(\TPS)^{\#3}$}
\author{\bf Nitin Singh \\
	{\small Department of Mathematics,
	Indian Institute of Science,
	Bangalore-560012, India.} \\
	{\small email: nitin@math.iisc.ernet.in}
}
\begin{document}
\maketitle
%\hrule

%%%% ABSTRACT %%%%
\abstract{A triangulated $d$-manifold $K$, satisfies the inequality
$\binom{f_0(K)-d-1}{2}\geq \binom{d+2}{2}\beta_1(K;\mathbb{Z}_2)$ for
$d\geq 3$. The triangulated
$d$-manifolds that meet the bound with equality are called {\em tight
neighborly}. In this paper, we present tight neighborly triangulations of $4$-manifolds
on $15$ vertices with $\mathbb{Z}_3$ as automorphism group. One such example was
constructed by Bagchi and Datta in 2011. We show that there are exactly 12
such triangulations up to isomorphism, 10 of which are orientable. 
}
\bigskip

\noindent{\small {\em MSC 2000\,:} 57Q15, 57R05.

\noindent {\em Keywords:} Stacked sphere; Tight neighborly triangulation;
Minimal triangulation.
}
\bigskip
%\hrule

%%%% INTRODUCTION %%%%
\section{Introduction}
Minimal triangulations play an important role in combinatorial topology. In
this regard, lower bounds on the number of vertices needed in a
triangulation of a topological space, in terms of it's topological
invariants are particulary important. For compact surfaces, Heawood's
inequality is one such lower bound, which states that a surface with
Euler characteristic $\chi$, requires at least $\lceil \frac{1}{2}
(7+\sqrt{49-24\chi})\rceil$ vertices for it's triangulation. A similar
analogoue for higher dimensions was proved by Novik and Swartz in
\cite{ns}. They prove that a triangulation of any manifold $K$, of dimension
$d\geq 3$ satisfies,
\begin{equation}\label{eq:tn}
\binom{f_0(K)-d-1}{2}\geq \binom{d+2}{2}\beta_1(K;\mathbb{Z}_2).
\end{equation}
The triangulations which satisfy
(\ref{eq:tn}) with equality were called {\em tight neighborly} by Lutz
{\em et al.} in \cite{lss}. It must be noted that tight neighborly triangulations,
when they exist, are vertex minimal. Infact, for $d\geq 4$, they have
compontent-wise minimal $f$-vector. A trivial example of a tight neighborly triangulation
is the $(d+2)$-vertex sphere $S^d_{d+2}$ for $d\geq 3$. In 1986, K\"{u}hnel
constructed $2d+3$ vertex triangulations of $S^{\,d-1}\times S^1$ for even
$d$, and $\TPSSDS$ for odd $d$. K\"{u}hnel's triangulations are tight
neighborly with $\beta_1=1$. Until recently, very few examples of tight
neighborly triangulations apart from K\"{u}hnel's series were known. The triangulation of a $4$-manifold constructed by Bagchi and
Datta in \cite{bd10}, was a first sporadic example of a tight neighborly
triangulation. Very recently Datta and Singh \cite{bdnsinf}, have given another infinite
family of tight neighborly triangulations, exploiting some
unique combinatorial properties of such triangulations. These techniques
were further extended in \cite{nsnon} to show the non-existence of
tight neighborly triangulations for $(S^{\,d-1}\times S^1)^{\#2}$ and
$(\TPSSDS)^{\#2}$. In this paper, our aim is to employ the recently
developed combinatorial criteria in \cite{bdnsinf} and \cite{nsnon} to enumerate all
possible tight neighborly triangulations of $(\TPSS)^{\#3}$ and
$(S^{\,3}\times S^1)^{\#3}$, which have a non-trival $\mathbb{Z}_3$ action
as the Bagchi-Datta example. As a result, we obtain one more tight neighborly triangulation of $(\TPSS)^{\#3}$ and $10$ new tight neighborly
triangulations of $(S^{\,3}\times S^1)^{\#3}$. 

\section{Preliminaries}
\subsection{Triangulations}
All simplicial complexes considered here are finite and abstract. Members
of a simplicial complex are called {\em faces}. The empty set is a face of
every simplicial complex. A
simplicial complex is called {\em pure} if all it's maximal faces (called
{\em facets}) have the
same dimension. A pure
$d$-dimensional simplicial complex is called a {\em weak pseudomanifold}
without boundary (resp., with boundary), if all it's $d-1$ dimensional
faces occur in exactly two (resp., at most two) facets. With a weak
pseudomanifold $X$, we associate the graph $\Lambda(X)$, whose vertices are
facets of $X$, and two facets are adjacent in $\Lambda(X)$ if they
intersect in a face of co-dimension one. A weak pseudomanifold $X$ of
dimension $d$ is called a $d$-pseudomanifold, if $\Lambda(X)$ is a
connected graph. Triangulations of connected manifolds are naturally pseudomanifolds.

If $X$ is a $d$-dimensional simplicial complex then, for $0\leq j\leq d$,
the number of its $j$-faces are denoted by $f_j=f_j(X)$. The vector
$(f_0,\ldots,f_d)$ is called the {\em face vector} of $X$. For simplicial complexes $X$ and $Y$, we define the {\em join} of $X$ and $Y$, denoted by $X\ast Y$
as,
\[ X\ast Y := \{\alpha\sqcup \beta : \alpha\in X,\beta\in Y\}. \]
If $X$ and $Y$ are pseudomanifolds of dimension $m$ and $n$ respectively
then, $X\ast Y$ is a
pseudomanifold of dimension $m+n+1$. When $X$ consists of a single vertex
$x$, we denote the join as $x\ast Y$, and call the join as {\em cone} over
$Y$. 

For a vertex $x\in X$, we define the subcomplex $\lk{X}{x}$,
called the {\em link} of $x$ in $X$ as,
\[ 
\lk{X}{x} := \{\alpha\in X: \alpha\cup \{x\}\in X, x\not\in \alpha\}. 
\]
The cone $x\ast \lk{X}{x}$ is called the {\em star} of $x$ in
$X$ and is denoted by $\st{X}{x}$. By the $k$-skeleton of a simplicial
complex $X$, denoted by $\skel{k}{X}$, we shall mean the subcomplex of
$X$ consisting of faces of dimension at most $k$. We call the
$1$-skeleton of a simplicial complex as it's {\em edge graph}. When the
edge graph is a complete graph on the vertex set, we will call the complex
to be {\em neighborly}.

\subsection{Walkup's class $\Kd$}
A very natural class of triangulations of a topological ball is the class of {\em
stacked} balls. A {\em standard $d$-ball} is the pure $d$-dimensional
simplicial complex with one facet. A simplicial complex $X$ is called a {\em
stacked $d$-ball} if it is obtained from standard $d$-ball by successively
pasting $d$-simplices along $(d-1)$-faces (cf. \cite[Section 2]{bdnsinf}).
A {\em stacked} $d$-sphere is defined as the boundary of a stacked
$(d+1)$-ball. Clearly a stacked $d$-ball and a stacked $d$-sphere triangulate
the $d$-ball and $d$-sphere, respectively. Following result from
\cite{bdnsinf}, gives a combinatorial characterization of a stacked ball.
\begin{prop}\label{prop:bdns}
Let $X$ be a pure $d$-dimensional simplicial complex. 
\begin{enumerate}[{\rm (a)}]
\item If $\Lambda(X)$ is a tree then $f_0(X)\leq f_d(X)+d$.
\item $\Lambda(X)$ is a tree and $f_0(X)=f_d(X)+d$ if and only if $X$ is a
stacked $d$-ball.
\end{enumerate}
\end{prop}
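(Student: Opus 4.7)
The plan is to prove both parts simultaneously by induction on the number of facets $f_d(X)$, peeling off one facet at a time using the classical observation that every finite tree with at least two vertices has a leaf. The geometric key is that a leaf $\sigma$ of $\Lambda(X)$ shares a $(d-1)$-face with exactly one other facet of $X$, so at least $d$ of its $d+1$ vertices already reappear in the pure subcomplex $X'$ obtained from $X$ by deleting $\sigma$ as a facet; consequently $f_0(X)\le f_0(X')+1$, with equality precisely when $\sigma$ contributes a genuinely new apex vertex.

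For part (a) the base case $f_d(X)=1$ is immediate, since the unique facet is a $d$-simplex on $d+1$ vertices. For the induction step, removing a leaf of a tree leaves a tree, so $\Lambda(X')$ is a tree, and the inductive hypothesis gives $f_0(X')\le f_d(X')+d=f_d(X)-1+d$; combined with $f_0(X)\le f_0(X')+1$ this yields $f_0(X)\le f_d(X)+d$.

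The ``if'' direction of (b) I would verify by running the stacking construction step by step: the base standard $d$-ball satisfies $f_0=d+1=f_d+d$ and has $\Lambda$ equal to a single vertex, and each subsequent pasting of a $d$-simplex along a boundary $(d-1)$-face introduces one new apex vertex and one new facet attached to $\Lambda$ by a single new edge, preserving both the equality $f_0=f_d+d$ and the tree property of $\Lambda$.

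The heart of the proof is the converse. Assuming $\Lambda(X)$ is a tree and $f_0(X)=f_d(X)+d$, I peel off a leaf $\sigma$ as above. The hypothesis forces both inequalities $f_0(X')\le f_d(X')+d$ and $f_0(X)\le f_0(X')+1$ to hold with equality, so $X'$ satisfies the same hypotheses with one fewer facet and $\sigma$ contributes a fresh apex vertex $v\notin X'$. By induction $X'$ is a stacked $d$-ball, and then $X$ is produced from $X'$ by pasting the $d$-simplex $\sigma$ along the $(d-1)$-face it shares with its unique neighbour in $\Lambda(X)$, completing the stacking construction. The main obstacle is bookkeeping rather than ideas: one must check that the shared $(d-1)$-face of $\sigma$ lies on the boundary of $X'$ (it does, since $\sigma$ was its second containing facet in $X$) and that the freshness of $v$ guarantees the remaining $(d-1)$-faces of $\sigma$ are not already present in $X'$, so the pasting is a legitimate simplicial operation conforming to the stacked-ball recipe.
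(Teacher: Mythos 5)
Your argument is correct, and there is nothing in the paper to compare it against line by line: the paper states this proposition without proof, quoting it from \cite{bdnsinf}, and your leaf-peeling induction is the standard argument (essentially the one in that source). The delicate points are all in order: a leaf $\sigma$ of the tree $\Lambda(X)$ meets exactly one other facet, and in exactly one $(d-1)$-face $\tau$, so the pure complex $X'$ spanned by the remaining facets satisfies $\Lambda(X')=\Lambda(X)-\sigma$ (again a tree) and $f_0(X)\le f_0(X')+1$; in the equality case both of your inequalities are forced to be equalities, $\tau$ is a free $(d-1)$-face of $X'$ (its only other containing facet in $X$ was $\sigma$ itself, by the leaf property), and the apex of $\sigma$ is a genuinely new vertex, so $\overline{\sigma}\cap X'=\overline{\tau}$ and $X$ arises from the stacked ball $X'$ by a legitimate pasting. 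The one caveat is definitional rather than mathematical: in the ``if'' direction the tree property only survives if ``pasting a $d$-simplex along a $(d-1)$-face'' is understood as attaching a simplex with a fresh apex along a face contained in exactly one facet of the current complex (pasting onto an interior face would create a cycle in $\Lambda$ and destroy the ball), and this is exactly the reading you adopt, which is the intended one since the paper asserts that stacked $d$-balls triangulate balls.
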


In \cite{Walkup}, Walkup introduced the class $\Kd$ of simplicial
complexes, all whose vertex-links are {\em stacked spheres}. Clearly
members of $\Kd$ are triangulated $d$-manifolds. The
following result of Novik and Swartz shows that for dimensions four and
above, tight neighborly triangulations lie within the class $\Kd$. 
\begin{prop}[{Novik \& Swartz \cite{ns}}]\label{prop:ns}
For $d\geq 4$, if $M$ is tight neighborly, then $M$ is a neighborly member of
$\Kd$.  
\end{prop}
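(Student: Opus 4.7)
The plan is to derive the proposition from the generalized lower bound theorem of Novik and Swartz \cite{ns}: for every triangulated $d$-manifold $M$ with $d\geq 3$, one has
\[ g_2(M) \,:=\, f_1(M) - (d+1)f_0(M) + \binom{d+2}{2} \,\geq\, \binom{d+2}{2}\beta_1(M;\mathbb{Z}_2), \]
and, when $d\geq 4$, equality holds if and only if $M$ belongs to Walkup's class $\Kd$. Everything in the proposition reduces to unpacking this inequality together with its equality case.

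First I would translate tight neighborliness into a statement about $g_2$. Since every $1$-face lies in $\binom{f_0}{2}$, we always have $f_1(M)\leq \binom{f_0(M)}{2}$, and substituting this into the definition of $g_2$ gives
\[ g_2(M) \,\leq\, \binom{f_0(M)}{2} - (d+1)f_0(M) + \binom{d+2}{2} \,=\, \binom{f_0(M)-d-1}{2}, \]
with equality iff $M$ is neighborly (the last identity is a routine binomial rearrangement). Chaining this with the Novik--Swartz bound yields
\[ \binom{f_0(M)-d-1}{2} \,\geq\, g_2(M) \,\geq\, \binom{d+2}{2}\beta_1(M;\mathbb{Z}_2), \]
which is precisely the inequality (\ref{eq:tn}) that defines tight neighborliness.

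Now suppose $M$ is tight neighborly, so both inequalities above are saturated simultaneously. Saturation of the left inequality forces $f_1(M)=\binom{f_0(M)}{2}$, hence $M$ is neighborly, giving half the conclusion. Saturation of the right inequality is exactly the equality case of the Novik--Swartz theorem, which for $d\geq 4$ forces $M\in\Kd$. Combining the two facts shows $M$ is a neighborly member of $\Kd$, as required.

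The main obstacle is not the algebraic manipulation but the equality case of the Novik--Swartz inequality itself. The inequality proper follows from a rigidity-theoretic argument in the style of Kalai, implemented on the Stanley--Reisner ring of $M$ using a generic linear system of parameters; the equality case requires a finer analysis of socle elements in the associated Artinian reduction, and it is here that the hypothesis $d\geq 4$ enters to guarantee that equality is strong enough to force every vertex link to be a stacked sphere. I would import this equality case from \cite{ns} rather than re-derive it, since it is the heart of the generalized lower bound theorem.
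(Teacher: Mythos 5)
The paper offers no proof of this proposition—it is imported verbatim from Novik and Swartz \cite{ns}—and your derivation is exactly the standard way it follows from their results: the identity $\binom{f_0}{2}-(d+1)f_0+\binom{d+2}{2}=\binom{f_0-d-1}{2}$ is correct, so tight neighborliness saturates both $f_1\le\binom{f_0}{2}$ (forcing neighborliness) and the bound $g_2\ge\binom{d+2}{2}\beta_1(M;\mathbb{Z}_2)$, whose equality case for $d\ge 4$ places $M$ in $\Kd$. Since you import the equality case from \cite{ns}, just as the paper imports the whole statement, your argument is correct and consistent with the paper's treatment.
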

Further, the following result by Kalai specifies the topological space
determined by members of $\Kd$. 

\begin{prop}[Kalai \cite{ka}]\label{prop:kalai}
Let $X\in \Kd$ and let $\beta_1=\beta_1(X;\mathbb{Z}_2)$. If $d\geq 4$, then $X$ triangulates $(S^{\,d-1}\times
S^1)^{\#\beta_1}$ if it is orientable, and it triangulates
$(\TPSSDS)^{\#\beta_1}$ if it is non-orientable. 
\end{prop}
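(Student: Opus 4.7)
The plan is to argue by induction on $\beta_1 = \beta_1(X;\mathbb{Z}_2)$, showing that every $X \in \Kd$ is built from a stacked $d$-sphere by iteratively performing a combinatorial ``handle addition'': given a $d$-pseudomanifold $Y \in \Kd$ with two vertex-disjoint facets $\sigma$ and $\tau$ and a bijection $\varphi\colon \sigma \to \tau$, remove both facets and identify their boundaries via $\varphi$ to obtain $Y'$. The topological effect of such an identification is precisely attaching a $1$-handle to $|Y|$, which changes its diffeomorphism type by connected sum with $S^{d-1}\times S^1$ when the identification preserves orientation and with $\TPSSDS$ otherwise. Since the Euler characteristic is unchanged in both cases but $\beta_1$ increases by one, one obtains the numerical correspondence $\#$ of handle additions $= \beta_1(Y')$.

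For the base case $\beta_1 = 0$, I would show directly that $X$ is a stacked $d$-sphere (hence triangulates $S^d$, which is $(S^{d-1}\times S^1)^{\#0}$ by convention). Since every vertex link $\lk{X}{v}$ is a stacked $(d-1)$-sphere, it is the boundary of a stacked $d$-ball by definition, and by Proposition 2.7(b) applied to those bounding balls, $\Lambda$ of each such ball is a tree. Passing this tree structure up to $X$ itself, together with the vanishing of $H_1(X;\mathbb{Z}_2)$, lets one assemble $X$ as the boundary of a stacked $(d+1)$-ball. The dimension hypothesis $d \geq 4$ enters here: it guarantees that links of edges are $(d-2)$-spheres with $d-2 \geq 2$, so the links themselves lie in a Walkup-type class and their combinatorics rigidly determine the global shelling.

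For the inductive step with $\beta_1 \geq 1$, the key is to extract one handle. I would look for a pair of vertex-disjoint $(d-1)$-faces $\sigma, \tau$ in $X$ whose removal, followed by a ``cut-and-cap'' that replaces them with two new cone facets $x_\sigma \ast \partial\sigma$ and $x_\tau \ast \partial\tau$, yields an $X' \in \Kd$ with $\beta_1(X') = \beta_1(X) - 1$ and $|X| \cong |X'| \# (S^{d-1}\times S^1)$ (or the twisted bundle). The existence of such a handle candidate follows from analyzing the tree structure of the stacked balls bounding the vertex links: a non-tree cycle in the appropriate dual graph of $X$ must arise and yields the required facet pair. Applying the inductive hypothesis to $X'$ and then performing the handle addition gives the result, with orientability determining the twist as governed by the classification of $S^{d-1}$-bundles over $S^1$.

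The main obstacle is the structural lemma behind the inductive step: producing, inside an arbitrary member of $\Kd$ with $\beta_1 \geq 1$, a valid pair $(\sigma, \tau, \varphi)$ whose reversal stays in $\Kd$. This is delicate because an arbitrary cut need not leave all vertex links stacked; one must choose $\sigma, \tau$ so that the local surgery in each affected link preserves the stacked-sphere property, and one must verify that the change in $\beta_1$ is exactly $1$ rather than $0$ or $2$. The $d \geq 4$ hypothesis is essential here, since in low dimension the vertex-link combinatorics is too flexible to force this rigidity.
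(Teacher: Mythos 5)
This proposition is quoted by the paper as a known result of Kalai (cited as \cite{ka}); the paper gives no proof of it, so there is no internal argument to compare yours against. Your overall strategy is indeed the one behind the actual theorem: Walkup's class of complexes obtained from stacked $d$-spheres by combinatorial handle additions, and Kalai's theorem that for $d\geq 4$ every member of $\Kd$ arises this way, after which the topological identification with $(S^{\,d-1}\times S^1)^{\#\beta_1}$ or $(\TPSSDS)^{\#\beta_1}$ follows from the handle count and orientability.

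The genuine gap is that your write-up asserts, rather than proves, exactly the two statements that constitute the theorem. The base case --- that $X\in\Kd$ with $\beta_1(X;\mathbb{Z}_2)=0$ and $d\geq 4$ is a stacked sphere --- is not a routine consequence of ``each link bounds a stacked ball plus $H_1=0$''; the passage from local tree structures on the vertex links to a global stacked $(d+1)$-ball whose boundary is $X$ is a substantial rigidity argument (this is where Kalai's lower-bound machinery, or alternatively the Bagchi--Datta correspondence $\partial:\overline{\mathcal K}(d+1)\to\Kd$ of Proposition \ref{prop:bijection}, does real work), and your sketch gives no mechanism for it. The inductive step is in even worse shape: you need to locate, inside an arbitrary $X\in\Kd$ with $\beta_1\geq 1$, an induced standard $(d-1)$-sphere along which one can cut and cap so that the result is again in $\Kd$ and $\beta_1$ drops by exactly one; you yourself label this ``the main obstacle'' and offer only the hope that ``a non-tree cycle in the appropriate dual graph must arise and yields the required facet pair.'' That existence statement is the heart of Kalai's theorem, not a lemma one can wave at, so as it stands the proposal is a plausible outline of the known proof route with its essential content missing. (Two smaller points: the reverse of a handle addition caps the two copies of the separating $\partial\Delta^d$ with $d$-simplices, not with cones over new vertices, and the admissibility condition on the identification $\varphi$ --- no vertex adjacent to its image --- is needed for the quotient to remain a simplicial complex; neither is addressed.)
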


The $4$-manifold constructed by Bagchi-Datta in \cite{bd10}, is tight
neighborly with parameters $(f_0,\beta_1)=(15,3)$ and is a member of
$\Kfour$. Analogous to the class $\Kd$, we define the class $\lKd$ of
triangulated manifolds, where the link of each vertex is a stacked $(d-1)$-ball.
We will use the notation $\Kdn$ and $\lKdn$ to denote neighborly members of
class $\Kd$ and $\lKd$ respectively. From the results in \cite{bd16}, we have the following correspondence:
\begin{prop}[Bagchi \& Datta]\label{prop:bijection}
If $d\geq 4$, then $M\mapsto \partial M$ is a bijection from
$\overline{\cal K}(d+1)$ to $\Kd$. 
\end{prop}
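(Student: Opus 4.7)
The plan is to verify that $\Phi\colon M\mapsto \partial M$ is a well-defined bijection from $\overline{\cal K}(d+1)$ to $\Kd$, by establishing well-definedness, injectivity, and surjectivity in turn.

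For well-definedness, observe that for $M\in\overline{\cal K}(d+1)$ every vertex link $\lk{M}{v}$ is a stacked $d$-\emph{ball}; in particular $M$ has no interior vertex, so $\partial M$ is a closed triangulated $d$-manifold on the same vertex set as $M$. The standard identity $\lk{\partial M}{v}=\partial\bigl(\lk{M}{v}\bigr)$ expresses each vertex link of $\partial M$ as the boundary of a stacked $d$-ball, hence as a stacked $(d-1)$-sphere, so $\partial M\in\Kd$.

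For injectivity, suppose $M_1,M_2\in\overline{\cal K}(d+1)$ satisfy $\partial M_1=\partial M_2=:N$. For each vertex $v$ of $N$ the complexes $\lk{M_1}{v}$ and $\lk{M_2}{v}$ are stacked $d$-balls sharing the boundary $\lk{N}{v}$. The decisive input is the \emph{uniqueness of stacked filling}: for $d\ge 4$, any stacked $(d-1)$-sphere bounds a unique stacked $d$-ball on the same vertex set. I would deduce this from Proposition~2.2 by arguing that the dual tree $\Lambda$ of a filling, together with its gluing data, is forced by the missing-edge pattern of the boundary sphere once $d\ge 4$. Once $\lk{M_1}{v}=\lk{M_2}{v}$ for every $v$, the equality $M_1=M_2$ is immediate because a $(d+1)$-simplex $\{v\}\cup\tau$ is a facet of $M_i$ iff $\tau$ is a facet of $\lk{M_i}{v}$.

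For surjectivity, given $N\in\Kd$ I would construct $M$ by filling each vertex link: let $B_v$ denote the unique stacked $d$-ball bounded by $\lk{N}{v}$, and declare a $(d+1)$-subset $\sigma$ of the vertex set of $N$ to be a facet of $M$ precisely when $\sigma\setminus\{v\}$ is a facet of $B_v$ for every $v\in\sigma$. It then remains to verify that $M$ is a $(d+1)$-pseudomanifold with boundary, that $\lk{M}{v}=B_v$ for each $v$ (so $M\in\overline{\cal K}(d+1)$), and that $\partial M=N$.

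The main obstacle lies in this last step, namely showing that the local fillings $\{B_v\}$ glue coherently, so that $M$ actually contains enough facets to make each $B_v$ equal to $\lk{M}{v}$ rather than only a subcomplex of it. I would handle this by induction on the number of handles making up $N$: starting from $\partial\Delta^{d+1}$, where $M=\Delta^{d+1}$ is the obvious filling, I would show that adding a handle to $N$ matches a canonical modification of the filling. The hypothesis $d\ge 4$ enters twice here, once to provide uniqueness of stacked filling, and once to ensure that the local change of a vertex link under a handle addition is rigid enough to be lifted compatibly to $M$.
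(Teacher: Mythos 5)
First, a point of reference: the paper does not prove Proposition \ref{prop:bijection} at all --- it is imported verbatim from Bagchi and Datta \cite{bd16} --- so your attempt can only be measured against that source, not against an argument contained in this paper. Your overall architecture is the standard one: well-definedness via $\lk{\partial M}{v}=\partial(\lk{M}{v})$ together with the absence of interior vertices (this part is fine and needs no dimension hypothesis); injectivity reduced to uniqueness of the stacked $d$-ball bounded by a stacked $(d-1)$-sphere; surjectivity by filling each vertex link and gluing. However, the latter two steps are declarations of intent rather than proofs, and they are exactly where the content of the proposition lies.

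Concretely, there are two gaps. (i) The uniqueness of the stacked filling is asserted, supported only by the remark that the dual tree is ``forced by the missing-edge pattern'' and a pointer to Proposition \ref{prop:ns} (you presumably intend Proposition \ref{prop:bdns}; Proposition \ref{prop:ns} is the Novik--Swartz statement and is irrelevant here). To turn this into a proof you need two facts stated nowhere in the paper: in a stacked $d$-ball every face of dimension at most $d-2$ lies on the boundary, so for $d\geq 3$ the ball and its boundary sphere have the same edge graph; and a stacked ball equals the clique complex of its edge graph (for instance via the Helly property of the subtrees $T_x$ of facets containing a fixed vertex). Together these recover the ball from the sphere; note they already work for $d\geq 3$, so the hypothesis $d\geq 4$ does not really ``enter'' here, contrary to your framing. (ii) Surjectivity, which you yourself flag as the main obstacle, is left as a plan: the proposed induction on ``the number of handles making up $N$'' presupposes the Walkup--Kalai structure theorem that every member of $\mathcal{K}(d)$, $d\geq 4$, is obtained from a stacked sphere by combinatorial handle additions (Walkup \cite{Walkup}, Kalai \cite{ka}) --- a result of depth comparable to the proposition itself and not available in this paper, since Proposition \ref{prop:kalai} records only the homeomorphism type --- and even granting it, the step in which a handle addition on $N$ is lifted to a compatible modification of the filling is not argued. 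As it stands the proposal is a plausible outline with its two load-bearing lemmas missing; the cited source \cite{bd16} closes both issues at once by an explicit description of the inverse $N\mapsto \overline{N}$ (the faces of dimension at most $d-1$ are those of $N$, and the top-dimensional faces are read off from low-dimensional skeleta of $N$), verified directly to lie in $\overline{\cal K}(d+1)$ with boundary $N$, rather than by an induction on handles.
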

Proposition \ref{prop:bijection} immediately suggests that one can look for
members of $\Kd$ as boundaries of members of $\overline{\cal K}(d+1)$. In
particular, we can
obtain members of $\Kfour$ as boundaries of members of $\lKfive$. This is
exactly the approach we take in this paper. Another easy consequence of the
above correspondence is the following:
\begin{cor}\label{cor:aut}
For $d\geq 4$, let $M,N\in \overline{\cal K}(d+1)$. Then $\varphi:
V(M)\rightarrow V(N)$ is an isomorphism from $M$ to $N$, if and only if it
is an isomorphism from $\partial M$ to $\partial N$. 
\end{cor}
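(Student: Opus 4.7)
The corollary is essentially a formal consequence of Proposition \ref{prop:bijection} together with the observation that every vertex of an element of $\overline{\mathcal{K}}(d+1)$ lies on its boundary. My plan is to reduce the statement to the injectivity of the boundary map, after verifying that the vertex sets of $M$ and $\partial M$ coincide so that $\varphi$ is genuinely defined on all of $V(M)$.

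First I would observe that for any $M\in\overline{\mathcal{K}}(d+1)$ the link of every vertex $v$ is a stacked $d$-ball, which is a $d$-manifold with non-empty boundary. Hence every vertex of $M$ is a boundary vertex, so $V(M)=V(\partial M)$, and similarly $V(N)=V(\partial N)$. Consequently the data of a vertex map $V(M)\to V(N)$ and a vertex map $V(\partial M)\to V(\partial N)$ are the same. The forward direction ``$\Longrightarrow$'' is then immediate: a simplicial isomorphism $\varphi:M\to N$ sends facets to facets and preserves codimension-one incidences, so it carries the combinatorially defined subcomplex $\partial M$ onto $\partial N$.

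For the reverse direction, suppose $\varphi:V(\partial M)\to V(\partial N)$ is an isomorphism $\partial M\to\partial N$. Form the pushforward
\[
\varphi_{\ast} M := \{\varphi(\alpha) : \alpha\in M\},
\]
a simplicial complex on $V(N)$ that is tautologically isomorphic to $M$ via $\varphi$. Because membership in $\overline{\mathcal{K}}(d+1)$ is a purely combinatorial condition on vertex links (stacked $d$-balls), we have $\varphi_{\ast}M\in\overline{\mathcal{K}}(d+1)$. Moreover $\partial(\varphi_{\ast}M)=\varphi_{\ast}(\partial M)=\partial N$. Now invoke Proposition \ref{prop:bijection}: both $\varphi_{\ast}M$ and $N$ are members of $\overline{\mathcal{K}}(d+1)$ on the vertex set $V(N)$ with the \emph{same} boundary $\partial N$, and $L\mapsto \partial L$ is a bijection. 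Injectivity forces $\varphi_{\ast}M=N$, which is exactly the statement that $\varphi$ is an isomorphism $M\to N$.

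The only real subtlety is reading Proposition \ref{prop:bijection} at the level of labeled complexes on a fixed vertex set (as opposed to isomorphism classes), so that equality of labeled boundaries forces equality of the labeled fillings; once that reading is fixed and the vertex-set identification $V(M)=V(\partial M)$ is in hand, the proof is just unwinding definitions. Neither step involves any nontrivial combinatorial estimate, so I expect no serious obstacle.
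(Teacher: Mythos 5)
Your proof is correct and follows essentially the route the paper intends: the paper offers no explicit argument, presenting the corollary as an easy consequence of Proposition \ref{prop:bijection}, and your derivation — noting $V(M)=V(\partial M)$ since every vertex link is a stacked ball, then using naturality of $\partial$ under relabeling together with injectivity of $M\mapsto\partial M$ on labeled complexes — is exactly that intended unwinding. Your flagged subtlety about reading the bijection at the level of labeled complexes is resolved by the underlying Bagchi--Datta result, in which $M$ is canonically reconstructed from $\partial M$, so the argument stands.
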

In particular, for $M=N$, we get ${\rm Aut}(M)={\rm Aut}(\partial M)$ for
$M\in \lKd$.

%%%%%%%%%%%%%%%%%%%%%%% EXAMPLES %%%%%%%%%%%%%%%%%%%%%%%5
\section{Results}\label{sec:results}
\begin{eg}\label{ex:complexes}{\rm
Let $V=\bigcup_{i=1}^5 \{a_i, b_i, c_i\}$ be a $15$-element set and let $\varphi:V\rightarrow V$ be the permutation defined by $\varphi :=
(a_1,b_1,c_1)(a_2,b_2,c_2)(a_3,b_3,c_3)(a_4,b_4,c_4)(a_5,b_5,c_5)$. For $1\leq
i\leq 12$, let $N_i$ denote the simplicial complex with vertex set $V$ and
the set of facets
\begin{equation}\label{eq:facets} 
\{z_i\}\cup \{u_{i,1},\ldots, u_{i,8}\}\cup \{v_{i,1},\ldots,
v_{i,8}\}\cup \{w_{i,1},\ldots, w_{i,8}\},
\end{equation}
where $z_i = a_1 b_1 c_1 a_2 b_2 c_2$, $v_{i,k} = \varphi(u_{i,k})$ and
$w_{i,k} = \varphi^2(u_{i,k})$ for $1\leq i\leq 12, 1\leq k\leq 8$. The
facets modulo the automorphism $\varphi$ are given in Table
\ref{tbl:facets}.
}
\end{eg}

\begin{table}[t]
{\footnotesize\tt
\begin{center}
\begin{tabular}{|cccc|}
\hline
$ u_{1,1} = a_1 b_1 c_1 a_2 b_2 a_3 $, &
$ u_{1,2} = a_1 b_1 a_2 b_2 a_3 a_4 $, &
$ u_{1,3} = a_1 a_2 b_2 a_3 a_4 a_5 $, &
$ u_{1,4} = a_1 a_2 a_3 a_4 b_4 a_5 $,\\
$ u_{1,5} = a_1 a_3 a_4 b_4 a_5 b_5 $, &
$ u_{1,6} = a_3 b_3 a_4 b_4 a_5 b_5 $, &
$ u_{1,7} = c_2 a_3 b_3 b_4 a_5 b_5 $, &
$ u_{1,8} = b_1 c_2 b_3 b_4 a_5 b_5 $;\\
\hline
$u_{2,1} = a_1 b_1 c_1 a_2 b_2 a_3 $, &
$ u_{2,2} = a_1 b_1 a_2 b_2 a_3 a_4 $, &
$ u_{2,3} = a_1 a_2 b_2 a_3 a_4 a_5 $, &
$ u_{2,4} = a_1 a_2 a_3 a_4 b_4 a_5 $,\\
$ u_{2,5} = a_1 a_2 a_3 b_4 a_5 b_5 $, &
$ u_{2,6} = a_2 a_3 b_3 b_4 a_5 b_5 $, &
$ u_{2,7} = a_3 b_3 b_4 c_4 a_5 b_5 $, &
$ u_{2,8} = b_1 b_3 b_4 c_4 a_5 b_5 $;\\
\hline
%\end{tabular}\\ 
%\begin{tabular}{cccc}
$ u_{3,1} = a_1 b_1 c_1 a_2 b_2 a_3 $, &
$ u_{3,2} = a_1 b_1 a_2 b_2 a_3 a_4 $, &
$ u_{3,3} = a_1 b_1 a_2 a_3 a_4 a_5 $, &
$ u_{3,4} = a_1 a_2 a_3 a_4 b_4 a_5 $,\\
$ u_{3,5} = a_1 a_2 a_3 b_4 a_5 b_5 $, &
$ u_{3,6} = a_2 a_3 b_3 b_4 a_5 b_5 $, &
$ u_{3,7} = a_3 b_3 b_4 c_4 a_5 b_5 $, &
$ u_{3,8} = b_2 b_3 b_4 c_4 a_5 b_5 $;\\
%\end{tabular}\\
%\begin{tabular}{cccc}
\hline
$ u_{4,1} = a_1 b_1 c_1 a_2 b_2 a_3 $, &
$ u_{4,2} = a_1 b_1 a_2 b_2 a_3 a_4 $, &
$ u_{4,3} = a_1 b_1 a_2 a_3 a_4 a_5 $, &
$ u_{4,4} = b_1 a_2 a_3 a_4 a_5 c_5 $,\\
$ u_{4,5} = a_2 a_3 b_3 a_4 a_5 c_5 $, &
$ u_{4,6} = a_2 a_3 b_3 a_4 b_4 a_5 $, &
$ u_{4,7} = a_2 b_3 a_4 b_4 a_5 b_5 $, &
$ u_{4,8} = c_1 b_3 a_4 b_4 a_5 b_5 $;\\
%\end{tabular}\\
%\begin{tabular}{cccc}
\hline
$ u_{5,1} = a_1 b_1 c_1 a_2 b_2 a_3 $, &
$ u_{5,2} = a_1 b_1 c_1 a_2 a_3 a_4 $, &
$ u_{5,3} = a_1 b_1 a_2 a_3 a_4 a_5 $, &
$ u_{5,4} = a_1 a_2 a_3 a_4 a_5 b_5 $,\\
$ u_{5,5} = a_2 a_3 a_4 b_4 a_5 b_5 $, &
$ u_{5,6} = a_2 a_3 b_3 b_4 a_5 b_5 $, &
$ u_{5,7} = a_2 a_3 b_3 b_4 c_4 b_5 $, &
$ u_{5,8} = a_2 b_3 b_4 c_4 b_5 c_5 $;\\
%\end{tabular}\\
%\begin{tabular}{cccc}
\hline
$ u_{6,1} = a_1 b_1 c_1 a_2 b_2 a_3 $, &
$ u_{6,2} = a_1 b_1 c_1 a_2 a_3 a_4 $, &
$ u_{6,3} = a_1 b_1 a_2 a_3 a_4 a_5 $, &
$ u_{6,4} = a_1 a_2 a_3 a_4 a_5 b_5 $,\\
$ u_{6,5} = a_2 a_3 a_4 b_4 a_5 b_5 $, &
$ u_{6,6} = a_2 a_3 b_3 a_4 b_4 b_5 $, &
$ u_{6,7} = a_2 a_3 b_3 b_4 b_5 c_5 $, &
$ u_{6,8} = a_2 b_3 b_4 c_4 b_5 c_5 $;\\
%\end{tabular}\\
%\begin{tabular}{cccc}
\hline
$ u_{7,1} = a_1 b_1 c_1 a_2 b_2 a_3 $, &
$ u_{7,2} = a_1 b_1 c_1 a_2 a_3 a_4 $, &
$ u_{7,3} = a_1 b_1 a_2 a_3 a_4 a_5 $, &
$ u_{7,4} = b_1 a_2 a_3 a_4 a_5 c_5 $,\\
$ u_{7,5} = a_2 a_3 a_4 b_4 a_5 c_5 $, &
$ u_{7,6} = a_2 a_3 b_3 b_4 a_5 c_5 $, &
$ u_{7,7} = a_2 a_3 b_3 b_4 c_4 a_5 $, &
$ u_{7,8} = a_2 b_3 b_4 c_4 a_5 b_5 $;\\
%\end{tabular}\\
%\begin{tabular}{cccc}
\hline
$ u_{8,1} = a_1 b_1 c_1 a_2 b_2 a_3 $, &
$ u_{8,2} = a_1 b_1 c_1 a_2 a_3 a_4 $, &
$ u_{8,3} = a_1 b_1 a_2 a_3 a_4 a_5 $, &
$ u_{8,4} = b_1 a_2 a_3 a_4 a_5 c_5 $,\\
$ u_{8,5} = a_2 a_3 a_4 b_4 a_5 c_5 $, &
$ u_{8,6} = a_2 a_3 b_3 a_4 b_4 a_5 $, &
$ u_{8,7} = a_2 a_3 b_3 b_4 a_5 b_5 $, &
$ u_{8,8} = a_2 b_3 b_4 c_4 a_5 b_5 $;\\
%\end{tabular}\\
%\begin{tabular}{cccc}
\hline
$ u_{9,1} = a_1 b_1 c_1 a_2 b_2 a_3 $, &
$ u_{9,2} = a_1 b_1 c_1 a_2 a_3 a_4 $, &
$ u_{9,3} = a_1 b_1 a_2 a_3 a_4 a_5 $, &
$ u_{9,4} = a_1 a_2 a_3 a_4 a_5 b_5 $,\\
$ u_{9,5} = a_2 a_3 a_4 c_4 a_5 b_5 $, &
$ u_{9,6} = a_2 a_3 b_3 a_4 c_4 b_5 $, &
$ u_{9,7} = a_2 a_3 b_3 a_4 b_5 c_5 $, &
$ u_{9,8} = a_2 b_3 a_4 b_4 b_5 c_5 $;\\
%\end{tabular}\\
%\begin{tabular}{cccc}
\hline
$ u_{10,1} = a_1 b_1 c_1 a_2 b_2 a_3 $, &
$ u_{10,2} = a_1 b_1 c_1 a_2 a_3 a_4 $, &
$ u_{10,3} = a_1 b_1 a_2 a_3 a_4 a_5 $, &
$ u_{10,4} = b_1 a_2 a_3 a_4 a_5 c_5 $,\\
$ u_{10,5} = a_2 a_3 a_4 c_4 a_5 c_5 $, &
$ u_{10,6} = a_2 a_3 b_3 a_4 c_4 a_5 $, &
$ u_{10,7} = a_2 a_3 b_3 a_4 a_5 b_5 $, &
$ u_{10,8} = a_2 b_3 a_4 b_4 a_5 b_5 $;\\
%\end{tabular}\\
%\begin{tabular}{cccc}
\hline
$ u_{11,1} = a_1 b_1 c_1 a_2 b_2 a_3 $, &
$ u_{11,2} = a_1 b_1 c_1 a_2 a_3 a_4 $, &
$ u_{11,3} = a_1 b_1 a_2 a_3 a_4 a_5 $, &
$ u_{11,4} = a_1 a_2 a_3 a_4 a_5 b_5 $,\\
$ u_{11,5} = a_2 a_3 b_3 a_4 a_5 b_5 $, &
$ u_{11,6} = a_2 b_3 a_4 b_4 a_5 b_5 $, &
$ u_{11,7} = b_2 b_3 a_4 b_4 a_5 b_5 $, &
$ u_{11,8} = b_2 b_3 c_3 a_4 b_4 b_5 $;\\
%\end{tabular}\\
%\begin{tabular}{cccc}
\hline
$ u_{12,1} = a_1 b_1 c_1 a_2 b_2 a_3 $, &
$ u_{12,2} = a_1 b_1 c_1 a_2 a_3 a_4 $, &
$ u_{12,3} = a_1 b_1 a_2 a_3 a_4 a_5 $, &
$ u_{12,4} = b_1 a_2 a_3 a_4 a_5 c_5 $,\\
$ u_{12,5} = a_2 a_3 b_3 a_4 a_5 c_5 $, &
$ u_{12,6} = a_2 b_3 a_4 b_4 a_5 c_5 $, &
$ u_{12,7} = b_2 b_3 a_4 b_4 a_5 c_5 $, &
$ u_{12,8} = b_2 b_3 c_3 a_4 b_4 a_5 $;\\
\hline
\end{tabular}
\end{center}
}
\caption{\small Facets of complexes modulo automorphism $\varphi$}
\label{tbl:facets}
\end{table}

The following are the main results of this paper.
\begin{theo}\label{thm:thm1}
Let $N\in\lKfiven$ with $f_0(N)=15$ and ${\rm
Aut}(N)\supseteq \mathbb{Z}_3$.  Then
$N\cong N_i$ for some $i\in \{1,2,\ldots,12\}$.
\end{theo}

\begin{theo}\label{thm:thm2}
Let $M\in\Kfourn$ with $f_0(M)=15$ and ${\rm
Aut}(M)\supseteq \mathbb{Z}_3$. Then $M\cong
\partial N_i$ for some $i\in \{1,2,\ldots,12\}$.
\end{theo}

\subsection{Geometric Carrier}
By Theorem \ref{thm:thm1} and Proposition \ref{prop:kalai}, the complexes
$\partial N_i$, $1\leq i\leq 12$,
triangulate $(S^3\times S^1)^{\#3}$ or $(\TPSS)^{\#3}$. Using a
combinatorial topology software such as {\tt simpcomp} \cite{simpcomp}, one
can check that complexes $\partial N_1$ and $\partial N_2$ are non-orientable, while the
complexes $\partial N_i$ for $i\in \{3,4,\ldots,12\}$ are orientable.
Thus, $\partial N_i$ triangulates $(\TPSS)^{\#3}$ for $i=1,2$ and
triangulates $(S^3\times S^1)^{\#3}$ for $3\leq i\leq 12$. We also point
out that the example $N_1$ obtained here is isomorphic to the triangulation
 $N^5_{15}$ obtained by Bagchi and Datta in \cite{bd10}. Consequently, the
triangulation $M^4_{15}$ of $(\TPSS)^{\#3}$ in \cite{bd10} is isomorphic to
$\partial N_1$.
%%%% OVERVIEW OF ENUMERATION STRATEGY

\section{Overview}
In this section, we give a broad outline of the enumeration strategy. By Proposition
\ref{prop:bijection} and Corollary \ref{cor:aut}, to obtain neighborly
members of $\Kfour$, with $\mathbb{Z}_3$ automorphism, we can instead look
for neighborly members of $\lKfive$ with $\mathbb{Z}_3$ automorphism. This
has the advantage that all vertex-links are stacked balls, and by
Proposition \ref{prop:bdns}, we have a succint combinatorial description
for it's dual graph; that it is a tree. Moreover, the dual graph of a
vertex $x$, in a pseudomanifold $X$, is isomorphic to the induced subgraph
$\Lambda(X)[V_x]$, where $V_x$ is the set of facets containing $x$. For 
$M\in \lKd$, let $T_x$ denote the subtree of
$\Lambda(M)$ induced by facets containing $x$. Then from \cite{nsnon}, we
have the following:
\begin{prop}\label{prop:nsnon}
For $M\in \lKdn$, let $T_x$ for $x\in V(M)$ be as defined above. Then
\begin{enumerate}[{\rm (a)}]
\item $\Lambda(M)$ is a two connected graph.
\item $\Lambda(M)$ contains $n(n-d)/(d+1)$ vertices and $n(n-d-1)/d$ edges
where $n=f_0(M)$.
\item $T_x$ contains $n-d$ vertices for each $x\in V(M)$.
\end{enumerate}
\end{prop}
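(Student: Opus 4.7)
The plan is to prove (c), (b), (a) in that logical order, since the formulas in (b) use (c) and (a) is a separate global argument. For (c), neighborliness of $M$ forces $f_0(\lk{M}{x})=n-1$, and since $\lk{M}{x}$ is by definition a stacked $(d-1)$-ball, Proposition \ref{prop:bdns}(b) gives $f_{d-1}(\lk{M}{x})=n-d$ and that $\Lambda(\lk{M}{x})$ is a tree. Under the bijection $\tau\mapsto\{x\}\cup\tau$ between facets of $\lk{M}{x}$ and facets of $M$ containing $x$, adjacency $|\tau_1\cap\tau_2|=d-1$ in the link corresponds to adjacency $|F_1\cap F_2|=d$ in $M$, so $T_x\cong\Lambda(\lk{M}{x})$ is a tree on $n-d$ vertices.

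For (b), both counts are routine double counts. Counting vertex--facet incidences using (c) gives $(d+1)f_d(M)=n(n-d)$, so $\Lambda(M)$ has $n(n-d)/(d+1)$ vertices. An edge of $\Lambda(M)$ is an interior $(d-1)$-face of $M$, and for any vertex $x$ a $(d-1)$-face $\sigma\ni x$ is interior iff $\sigma\setminus\{x\}$ is an interior $(d-2)$-face of $\lk{M}{x}$. Since $\Lambda(\lk{M}{x})$ is a tree on $n-d$ nodes it has $n-d-1$ edges, hence that many interior $(d-2)$-faces. Summing over $x$ and dividing by $d$ (each interior $(d-1)$-face has $d$ vertices) yields $n(n-d-1)/d$ edges.

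Part (a) is the main obstacle, as it is global and genuinely needs neighborliness (without it, cut vertices easily arise, e.g., in a stacked $d$-ball whose dual tree has an internal vertex). Connectedness of $\Lambda(M)$ is automatic since $M$ is a pseudomanifold, so it suffices to show no facet $F$ is a cut vertex. Suppose for contradiction some facet $F$ is one, with distinct components $\mathcal{C}_1,\mathcal{C}_2$ of $\Lambda(M)-F$. Each $\mathcal{C}_i$ contains some facet $G_i\neq F$; since $|G_i|=|F|=d+1$ we can pick $y_i\in G_i\setminus F$. For any $y\notin F$ the tree $T_y$ sits inside $\Lambda(M)-F$ and is connected, so $V_y$ lies in a single component; applied to $y_i$, this gives $V_{y_i}\subseteq\mathcal{C}_i$, which in particular forces $y_1\neq y_2$. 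Now by neighborliness $\{y_1,y_2\}$ is an edge of $M$ and hence lies in some facet $H$; since $y_1,y_2\notin F$ we have $H\neq F$, so $H\in V_{y_1}\cap V_{y_2}\subseteq\mathcal{C}_1\cap\mathcal{C}_2=\emptyset$, a contradiction.
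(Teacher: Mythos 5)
The paper does not prove Proposition~\ref{prop:nsnon}; it is quoted from \cite{nsnon} without proof, so there is no in-paper argument to compare against. Your proof is correct and self-contained given the paper's Proposition~\ref{prop:bdns}. For (c), the identification $T_x\cong\Lambda(\lk{M}{x})$ combined with Proposition~\ref{prop:bdns}(b) applied to the stacked $(d-1)$-ball $\lk{M}{x}$ (with $f_0(\lk{M}{x})=n-1$ by neighborliness) indeed gives $|V(T_x)|=n-d$, and this is clearly the intended route. The two double counts in (b) are correct: vertex--facet incidences give $(d+1)f_d=n(n-d)$, and the interior-$(d-1)$-face count uses that each interior $(d-1)$-face has $d$ vertices while $x$ contributes exactly the $n-d-1$ tree edges of $T_x$. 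For (a), the argument that $T_y\subseteq\Lambda(M)-F$ pins every $V_y$ (with $y\notin F$) into a single component, followed by the use of neighborliness to produce a facet $H\ni\{y_1,y_2\}$ lying in both components, is a clean 2-connectedness proof; it is exactly the point where neighborliness is essential, as you note. One cosmetic remark: ``$\Lambda(M)$ connected since $M$ is a pseudomanifold'' is phrased circularly relative to the paper's definition (connectedness of $\Lambda$ is the defining condition for a pseudomanifold); it is cleaner to say $M$ is a neighborly, hence connected, triangulated manifold, and triangulations of connected manifolds are pseudomanifolds.
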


In \cite{nsnon}, a set of facets $S$ of $M\in \lKdn$ was defined to be {\em
critical} in $M$ if each of the connected components of
$\Lambda(M)-S$ contained fewer than $f_0(M)-d$ vertices. The following 
observations were also made there.
\begin{prop}\label{prop:critical}
Let $S$ be a critical set of facets of $M\in \lKdn$. Then the facets in $S$ together
contain all the vertices of $M$.
\end{prop}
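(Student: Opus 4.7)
My plan is to argue by contrapositive: suppose some vertex $x\in V(M)$ is not contained in any facet of $S$, and then derive a contradiction with the defining size condition of a critical set. The key tool will be Proposition \ref{prop:nsnon}(c), which pins down the size of $T_x$ exactly.

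First I would unpack the hypothesis. By definition, $T_x$ is the induced subgraph $\Lambda(M)[V_x]$, where $V_x$ is the collection of facets of $M$ containing $x$. If no facet in $S$ contains $x$, then $V_x$ is disjoint from $S$, so $T_x$ sits entirely inside $\Lambda(M)-S$ as an induced subgraph on $V_x$.

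Next I would use the tree structure. Since $M\in \lKdn$, the vertex-link $\lk{M}{x}$ is a stacked $(d-1)$-ball, and its dual graph is isomorphic to $T_x$; by Proposition \ref{prop:bdns}(b) this is a tree, hence connected. A connected subgraph of $\Lambda(M)-S$ must be contained in a single connected component. Finally I would invoke Proposition \ref{prop:nsnon}(c) to count: $T_x$ has exactly $n-d = f_0(M)-d$ vertices, so the connected component of $\Lambda(M)-S$ containing $T_x$ has at least $f_0(M)-d$ vertices, contradicting the criticality assumption that every component has strictly fewer than $f_0(M)-d$ vertices. Therefore every $x\in V(M)$ lies in at least one facet of $S$, which is the conclusion.

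I do not expect any step here to be a genuine obstacle; the whole proof is a short syllogism once the three ingredients (definition of critical, tree property of $T_x$ via Proposition \ref{prop:bdns}, and the vertex count $|V(T_x)|=n-d$ from Proposition \ref{prop:nsnon}(c)) are lined up. The only item worth stating carefully is the identification of $T_x$ with the dual graph of $\lk{M}{x}$, which justifies both its connectedness and its exact vertex count.
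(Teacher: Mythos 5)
Your argument is correct, and it is the natural (and essentially inevitable) proof given the stated ingredients. The paper itself does not reprove this proposition—it imports it from \cite{nsnon}—so there is no in-text proof to compare against, but your line of reasoning is exactly what the cited source would have to do: if $x$ avoids every facet in $S$, then $V_x\cap S=\emptyset$, so the connected subgraph $T_x$ (connected because it is the dual graph of the stacked ball $\st{M}{x}$, hence a tree by Proposition~\ref{prop:bdns}(b)) lies in a single component of $\Lambda(M)-S$, and by Proposition~\ref{prop:nsnon}(c) that component has at least $f_0(M)-d$ vertices, contradicting criticality. The one step worth making fully explicit, which you flag yourself, is the identification of $\Lambda(M)[V_x]$ with the dual graph of $\st{M}{x}$; that is exactly the observation the paper records just before stating Proposition~\ref{prop:nsnon}, so your proof is complete as written.
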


\begin{prop}\label{prop:degthree}
Let $M\in \lKdn$ with $f_0(M)>2d+1$. Then the set of facets
with degree three or more in $\Lambda(M)$ together contain all the vertices
of $M$.
\end{prop}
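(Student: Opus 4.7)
The plan is to prove that the set $S$ of facets of $\Lambda(M)$-degree at least three is a \emph{critical} set of $M$, and then to conclude by Proposition~\ref{prop:critical}. Since each facet in $\Lambda(M)-S$ has $\Lambda(M)$-degree at most $2$, any connected component of $\Lambda(M)-S$ has maximum degree at most $2$ and is therefore a path or a cycle. I first rule out cycle components: if $C$ were a cycle, each $F\in C$ would already use both of its $\Lambda(M)$-neighbours inside $C$, so $C$ has no edges leaving; by the $2$-connectivity of $\Lambda(M)$ (Proposition~\ref{prop:nsnon}(a)), $C=\Lambda(M)$, and equating $|V(\Lambda(M))|$ with $|E(\Lambda(M))|$ via Proposition~\ref{prop:nsnon}(b) yields $n=2d+1$, contradicting the hypothesis.

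For path components, the argument in the proof of Proposition~\ref{prop:critical} reduces the criticality of $S$ to showing that no vertex $y\in V(M)$ satisfies $T_y\subseteq\Lambda(M)-S$. Assume otherwise. Then $T_y$ has $n-d$ facets, each of $\Lambda(M)$-degree exactly $2$ (using $2$-connectivity again), so $T_y$ must be a path $F_1F_2\cdots F_{n-d}$; the interior facets use both their $\Lambda(M)$-neighbours inside $T_y$, while the endpoints $F_1,F_{n-d}$ each have one further $\Lambda(M)$-neighbour $G_1,G_2$ outside $T_y$. Since $G_1,G_2$ cannot contain $y$, they must take the form $G_1=(F_1\setminus\{y\})\cup\{z_1\}$ and $G_2=(F_{n-d}\setminus\{y\})\cup\{z_2\}$. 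Using that $\lk{M}{y}$ has $n-1$ vertices (it is a stacked $(d-1)$-ball with $n-d$ facets) gives $V(T_y)=V(M)$. For any facet $F\in\Lambda(M)-T_y-\{G_1,G_2\}$ and any $v\in F$, the connected tree $T_v$ must reach some $T_y$-facet containing $v$; since $\{G_1,G_2\}$ separates $T_y$ from this complement in $\Lambda(M)-\{G_1,G_2\}$, the tree $T_v$ must pass through $G_1$ or $G_2$, so $v\in V(G_1)\cup V(G_2)$.

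Taking any $w\in V(M)\setminus(V(G_1)\cup V(G_2))$ with $w\ne y$, the tree $T_w$ then meets neither this complement nor $\{G_1,G_2\}$, so $T_w\subseteq T_y$; since $|T_w|=|T_y|=n-d$, in fact $T_w=T_y$, which forces $w\in F_1\setminus\{y\}\subseteq V(G_1)$, a contradiction. Therefore $V(M)\setminus\{y\}\subseteq V(G_1)\cup V(G_2)$, so $n-1\le 2(d+1)$, that is, $n\le 2d+3$. Combined with $n>2d+1$ and the integrality requirements of Proposition~\ref{prop:nsnon}(b), which exclude $n\in\{2d+2,2d+3\}$ for $d\ge 3$, this is the desired contradiction. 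The main obstacle I expect is precisely this trapping of $V(M)\setminus\{y\}$ into $V(G_1)\cup V(G_2)$: the cheap $2$-connectivity contradiction only applies when $G_1=G_2$, and that case is already excluded under $n>2d+1$ (the two leaf facets of the path $\lk{M}{y}$ differ in too many vertices), so the honest proof must combine $V(T_y)=V(M)$, the connectedness of every $T_v$, and the twin-vertex identity $T_w=T_y$ to squeeze $V(M)\setminus\{y\}$ into a set of size at most $2d+2$.
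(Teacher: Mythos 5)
This proposition is cited from \cite{nsnon}; the present paper gives no proof of it, so there is no argument here to compare against directly. Taken on its own terms, your proposal is essentially sound, but a few things deserve attention.

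The announced plan (show $S$ is critical, then invoke Proposition~\ref{prop:critical}) is not what the argument actually does, and need not be: ``every vertex of $M$ lies in some facet of $S$'' is literally the same as ``no $y$ has $T_y\subseteq\Lambda(M)-S$'', which is exactly the contradiction you set up; criticality is a stronger property and you never establish it. The preliminary discussion of path versus cycle components of $\Lambda(M)-S$ is likewise superfluous: once you fix a putative bad $y$, $T_y$ is a tree (it is the dual graph of the stacked $(d-1)$-ball $\lk{M}{y}$, by Proposition~\ref{prop:bdns}) of maximum degree at most $2$, hence a path, independently of the global component structure. The step $G_1\ne G_2$ needs a real argument; your parenthetical is right in spirit but compressed. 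If $G_1=G_2$ then $F_1\setminus\{y\}$ and $F_{n-d}\setminus\{y\}$ are two $d$-subsets of the $(d+1)$-set $G_1$, forcing $|F_1\cap F_{n-d}|\ge d$, hence equal to $d$ since $F_1\ne F_{n-d}$; thus $F_1$ and $F_{n-d}$ are adjacent in $\Lambda(M)$, giving $F_1$ the three distinct neighbours $F_2$, $F_{n-d}$, $G_1$ (one uses $n-d>d+1\ge 3$) and contradicting $F_1\notin S$. Finally, the trapping argument and the integrality finish are correct for $d\ge 3$ (worth stating explicitly, since $n=2d+2$ passes the integrality test when $d=2$), but are heavier than necessary: once you have the path $G_1F_1\cdots F_{n-d}G_2$ with all $n-d$ internal vertices of $\Lambda(M)$-degree $2$, Proposition~\ref{prop:pathprop}(c) gives $n-d+1\le d+1$, i.e.\ $n\le 2d$, contradicting $n>2d+1$ outright. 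Your route avoids Proposition~\ref{prop:pathprop} (it is listed later, also from \cite{nsnon}, so this may be deliberate), at the cost of the extra bookkeeping of $V(T_y)=V(M)$, the separator argument, and the mod-$d$ and mod-$(d+1)$ arithmetic.
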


Identifying critical set of facets helps us reduce the possibilities for
members of $\lKfiven$. We already know from Proposition \ref{prop:nsnon},
that the dual graph $\Lambda(M)$ for $M\in \lKdn$ is two connected. When
working with complexes with non-trivial automorphism groups, we can further
narrow down the admissible dual graphs, due to the following observation.

\begin{prop}[cf. {\cite{bdnsinf}}]\label{prop:autdual}
Let $M\in \lKd$, then ${\rm Aut}(M)$ is a subgroup of ${\rm
Aut}(\Lambda(M))$.
\end{prop}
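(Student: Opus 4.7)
The plan is to construct a natural group homomorphism $\Phi:\mathrm{Aut}(M)\to\mathrm{Aut}(\Lambda(M))$ by letting each simplicial automorphism act on facets, and then verify that it is injective. Given $\sigma\in\mathrm{Aut}(M)$, we define $\Phi(\sigma)=\hat\sigma$ by $\hat\sigma(F):=\sigma(F)$ for each facet $F$ of $M$. Since $M\in\lKd$ is pure of dimension $d$ and $\sigma$ is a simplicial bijection preserving face dimensions, $\sigma$ permutes the set of $d$-faces of $M$, so $\hat\sigma$ is a well-defined permutation of $V(\Lambda(M))$.

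Two facets $F,F'$ are adjacent in $\Lambda(M)$ if and only if $|F\cap F'|=d$, and since $\sigma$ is a bijection on $V(M)$ we have $\sigma(F\cap F')=\sigma(F)\cap\sigma(F')$, giving $|\hat\sigma(F)\cap\hat\sigma(F')|=|F\cap F'|$. Applying the same observation to $\sigma^{-1}$ shows $\hat\sigma\in\mathrm{Aut}(\Lambda(M))$, and the identity $(\sigma\tau)(F)=\sigma(\tau(F))$ then makes $\Phi$ a group homomorphism.

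For injectivity, suppose $\hat\sigma=\mathrm{id}_{\Lambda(M)}$. Then $\sigma$ fixes every facet of $M$ setwise, so for every vertex $u\in V(M)$ we have $\sigma(u)\in\bigcap_{F\ni u}F$. To conclude $\sigma=\mathrm{id}$, we must show this intersection equals $\{u\}$; equivalently, that no vertex $v\neq u$ lies in every facet through $u$, i.e., $\lk{M}{u}$ is not a cone. Since $\lk{M}{u}$ is a stacked $(d-1)$-ball, its dual graph is a tree by Proposition \ref{prop:bdns}, and for the non-degenerate $M\in\lKdn$ considered in this paper (neighborly, with $f_0(M)$ large compared to $d$) this cone-freeness can be read off from the tree structure.

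The main obstacle is exactly this injectivity step: the functoriality of $\Phi$ is formal, whereas showing $\bigcap_{F\ni u}F=\{u\}$ requires a small structural lemma ruling out the cone case for the stacked balls that arise as vertex-links in our parameter range. Once that lemma is in place, the embedding $\mathrm{Aut}(M)\hookrightarrow\mathrm{Aut}(\Lambda(M))$ follows immediately.
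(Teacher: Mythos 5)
The construction of the facet action and the verification that it gives a homomorphism $\mathrm{Aut}(M)\to\mathrm{Aut}(\Lambda(M))$ is routine and is certainly the intended route (the paper itself does not prove this proposition; it quotes it, and the refined Lemma \ref{lem:authomo}, from \cite{bdnsinf}). The entire content of the statement, however, is the injectivity, and this is precisely where your argument stops short: you reduce it to the claim that no vertex $v\neq u$ lies in every facet through $u$, i.e.\ that $\lk{M}{u}$ is not a cone, and then assert that this ``can be read off from the tree structure'' of the dual graph of the stacked ball. That inference is not valid. A stacked $(d-1)$-ball can perfectly well be a cone --- already two $(d-1)$-simplices glued along a common $(d-2)$-face form a stacked ball each of whose facets contains all $d-1$ common vertices --- and its dual graph is of course still a tree, so Proposition \ref{prop:bdns} by itself cannot rule out the cone case. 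Some nondegeneracy hypothesis is genuinely needed here: for the standard $d$-ball, which satisfies the definition of $\lKd$ used in this paper, $\Lambda(M)$ is a single vertex while $\mathrm{Aut}(M)$ is the full symmetric group on $d+1$ letters, so the asserted embedding simply fails; the hypotheses under which it holds are carried by the citation. So as written, your proof has a genuine gap, deferred to an unproven ``structural lemma'' whose suggested justification does not work.

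The gap can be closed with tools already in the paper, in the setting where the proposition is actually used ($M\in\lKdn$ with $f_0(M)$ large, e.g.\ $f_0=15$, $d=5$). Suppose $\hat\sigma$ is the identity on $\Lambda(M)$ and $\sigma(u)=v\neq u$. Then every facet containing $u$ contains $v$, so $V(T_u)\subseteq V(T_v)$ in the notation of the paper; iterating $\sigma$ (which has finite order), or simply using that $|V(T_u)|=|V(T_v)|=f_0(M)-d$ by Proposition \ref{prop:nsnon}(c) in the neighborly case, forces $T_u=T_v$, contradicting Lemma \ref{lem:treelemma}(a). Since Lemma \ref{lem:treelemma} is proved from the weak pseudomanifold property and connectedness alone, there is no circularity. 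Some counting or separation input of this kind is indispensable; the formal functoriality plus the tree property of the links is not enough.
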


We now summarize what the above propositions imply for $M\in \lKfiven$,
with $f_0(M)=15$ and ${\rm Aut}(M)\supseteq \mathbb{Z}_3$. We have,
\begin{enumerate}[{\rm (a)}]
\item $\Lambda(M)$ is a two connected graph,
\item $\Lambda(M)$ contains $25$ vertices and $27$ edges,
\item $\mathbb{Z}_3$ is a subgroup of ${\rm Aut}(\Lambda(M))$, and
\item for each vertex $x$ of $M$, $x$ appears in $10$ facets of $M$, which
induce a tree on $\Lambda(M)$.
\end{enumerate}

The program we carry out in the next section, leading to the classification
is the following. First we classify all two connected graphs on $25$
vertices, with $27$ edges which exhibit $\mathbb{Z}_3$ symmetry. Then for
each of these graphs, we consider all possible members of
$\lKfiven$ which will have the graph as their dual graph. 

%%%%% CLASSIFICATION %%%%% 
\section{Classification}
%%% DEFINITIONS OF GRAPH CLASSES %%%
\begin{eg}\label{eg:grs}{\rm 
For $r,s\geq 1$, let $G_{r,s}$ be the graph on $3r+3s-2$ vertices, with vertex set 
$V=\{z_0\}\cup (\bigcup_{i=1}^{r+s-1} \{u_i,v_i,w_i\})$ and 
consisting of six edge disjoint paths, namely (see Figure \ref{fig:graphs}\subref{fig:g36}),
}

\noindent\begin{tabular}{lll}
$p_{zu} := z_0u_1\cdots u_r$, & $p_{zv} := z_0v_1\cdots v_r$,
 & $p_{zw} := z_0w_1\cdots w_r$, \\
$p_{uv} := u_ru_{r+1}\cdots u_{r+s-1}v_r$, & 
$p_{vw} := v_rv_{r+1}\cdots v_{r+s-1}w_r$, &
$p_{wu} := w_rw_{r+1}\cdots w_{r+s-1}u_r$.
\end{tabular}
\end{eg}

\begin{eg}\label{eg:trs}
\noindent{\rm For $r,s\geq 1$, let $T_{r,s}$ be the graph on $3r+s$ vertices with vertex set
$V=\{x_1,\ldots,x_s\}\cup (\bigcup_{i=1}^r \{u_i,v_i,w_i\})$ and 
consisting of four edge disjoint paths, namely (see Figure \ref{fig:graphs}\subref{fig:t67}),
}

\noindent\begin{tabular}{llll}
$p_0 := x_1x_1\cdots x_s$, & $p_1 := x_1u_1\cdots u_rx_s$, 
 & $p_2 := x_1v_1\cdots v_rx_s$, &
$p_3 := x_1w_1\cdots w_rx_s$.
\end{tabular}
\end{eg}

%%%% FIGURE 1 - Examples of G_{3,6} and T_{6,7} %%%%
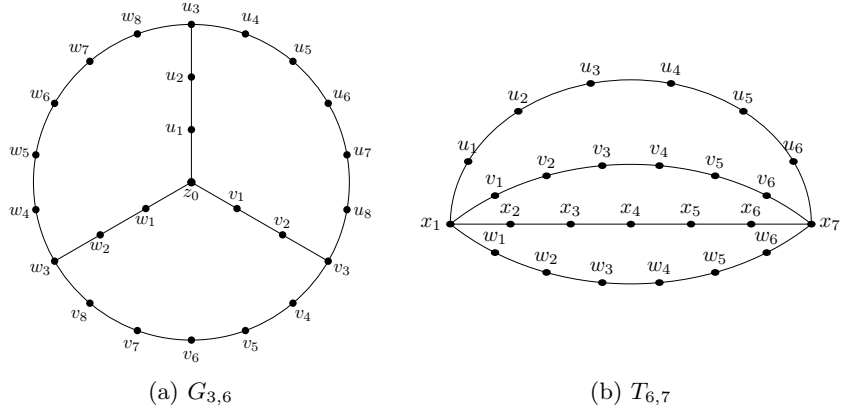
\begin{figure}[htbp]
\centering
\subfloat[$G_{3,6}$\label{fig:g36}] {
\scalebox{0.7} {
\begin{tikzpicture}
\coordinate (center) at (0,0);
\def\radius{3.0cm}
% draw the circle
\draw (center) circle[radius=\radius];
\draw[fill] (center) circle[radius=2pt];
\draw (center) node[below] {\small $z_0$};
\draw (center) -- ++(90:\radius);
\draw (center) -- ++(-30:\radius);
\draw (center) -- ++(-150:\radius);

% draw the u_1..u_2, v_1..v_2, w_1..w_2 paths
\foreach \i in {1,2} {
\fill (center) ++(90:\i) circle[radius=2pt] 
	node[left] {\small $u_{\i}$};
\fill (center) ++(-30:\i) circle[radius=2pt]
	node[above] {\small $v_{\i}$};
\fill (center) ++(-150:\i) circle[radius=2pt]
	node[below] {\small $w_{\i}$};
}

% draw the outer cycle labels.
\foreach \i in {3,4,5,6,7,8} {
\fill (center) ++(150-20*\i:\radius) circle[radius=2pt];
\draw (center) ++(150-20*\i:\radius+0.8em)	
	node {\small $u_{\i}$};
\fill (center) ++(30-20*\i:\radius) circle[radius=2pt];
\draw (center) ++(30-20*\i:\radius+0.8em)
	node {\small $v_{\i}$};
\fill (center) ++(270-20*\i:\radius) circle[radius=2pt];
\draw (center) ++(270-20*\i:\radius+0.8em)
	node {\small $w_{\i}$};

}
\end{tikzpicture}
}
}
\subfloat[$T_{6,7}$\label{fig:t67}]{
\scalebox{0.8}{
\begin{tikzpicture}[yscale=0.8]
\pgfmathsetmacro{\centerA}{3};
\pgfmathsetmacro{\centerB}{0};
\pgfmathsetmacro{\radiusA}{3*sqrt(2)};
\pgfmathsetmacro{\radiusB}{3};
\pgfmathsetmacro{\angleA}{90/7};
\pgfmathsetmacro{\angleB}{180/7};
\pgfmathsetmacro{\initA}{45};
\pgfmathsetmacro{\finA}{135};

\coordinate (cA) at (3,-\centerA);
\coordinate (cB) at (3, 0);
\coordinate (cC) at (3,\centerA);

\draw (6,0) arc (\initA:\finA:\radiusA);
\draw (0,0) arc (180+\initA:180+\finA:\radiusA);
\draw (6,0) arc (0:180:\radiusB);

\draw (0,0) -- (6,0);
\fill (0,0) circle[radius=2pt] node[left] {\small $x_1$};
\fill (6,0) circle[radius=2pt] node[right] {\small $x_7$};

\foreach \i in {2,3,4,5,6} {
\fill (-1+\i,0) circle[radius=2pt] node[above] {\small $x_{\i}$};
}

\foreach \i in {1,2,3,4,5,6} {
\fill (cA) ++(\finA-\angleA*\i:\radiusA) 
	circle[radius=2pt] node[above] {\small $v_{\i}$};
\fill (cB) ++(180-\angleB*\i:\radiusB)
	circle[radius=2pt] node[above] {\small $u_{\i}$};
\fill (cC) ++(180+\initA+\angleA*\i:\radiusA)
	circle[radius=2pt] node[above] {\small $w_{\i}$};

}
\end{tikzpicture}
}
}
\caption{\small Graphs $G_{3,6}$ and $T_{6,7}$}
\label{fig:graphs}
\end{figure}

\begin{lemma}\label{lem:graphs}
Let $G$ be a $2$-connected graph on $25$ vertices with $27$ edges. If
${\rm Aut}(G)\supseteq \mathbb{Z}_3$, then $G\cong G_{r,s}$ for some
$r,s>0$ with $r+s=9$ or $G\cong T_{r,s}$ for some $r,s>0$ with $3r+s=25$. 
\end{lemma}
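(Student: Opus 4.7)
The plan is to analyze $G$ through its \emph{topological core} $G^{\ast}$, obtained by iteratively smoothing away all vertices of degree $2$. Since $G$ is $2$-connected with cyclomatic number $27-25+1=3$, $G^{\ast}$ has the same cyclomatic number, every vertex of $G^{\ast}$ has degree at least $3$, and $G^{\ast}$ has no loops (a loop in $G^{\ast}$ would force a cut vertex in $G$). If $G^{\ast}$ has $n^{\ast}$ vertices, it has $n^{\ast}+2$ edges, and the handshake inequality $2(n^{\ast}+2)\geq 3 n^{\ast}$ yields $2\leq n^{\ast}\leq 4$.

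Because the core construction is canonical, any automorphism of $G$ induces one of $G^{\ast}$, so $\mathbb{Z}_{3}\leq \mathrm{Aut}(G^{\ast})$. The one technical reduction I rely on is: an order-$3$ automorphism of a path that fixes both endpoints acts trivially on the path; hence, if $\mathbb{Z}_{3}$ acts trivially on $G^{\ast}$, then it acts trivially on $G$, contradicting the hypothesis. This lets me rule out $n^{\ast}=3$: the only admissible multigraph on three vertices with five edges and minimum degree three has degree sequence $(4,3,3)$, so the degree-$4$ vertex is $\mathbb{Z}_{3}$-fixed and the remaining two can at most be swapped (an order-$2$ operation), forcing all three vertices to be fixed; but the parallel-edge classes have sizes $1$ and $2$, so $\mathbb{Z}_{3}$ acts trivially on edges as well.

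For $n^{\ast}=4$ the multigraph is $3$-regular on four vertices. A short enumeration shows the only connected possibilities are $K_{4}$ and the graph with double edges $\{1,2\},\{3,4\}$ and single edges $\{1,3\},\{2,4\}$; the latter has automorphism group isomorphic to the Klein four-group, so admits no order-$3$ automorphism. Thus $G^{\ast}=K_{4}$. A non-trivial order-$3$ element of $S_{4}=\mathrm{Aut}(K_{4})$ is a $3$-cycle, fixing one vertex and cyclically permuting the other three; the six edges of $K_{4}$ then split into two $\mathbb{Z}_{3}$-orbits of size $3$, and the corresponding chains of $G$ have common lengths $r$ and $s$ respectively. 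The identity $3r+3s=27$ yields $r+s=9$, giving $G\cong G_{r,s}$.

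For $n^{\ast}=2$, $G^{\ast}$ is two vertices joined by four parallel edges. The two vertices cannot be swapped by an order-$3$ element, so both are fixed, and $\mathbb{Z}_{3}$ must fix one of the four chains while cyclically permuting the other three (which hence have a common length $r+1$); with the fixed chain of length $s-1$, totalling $27$ edges, one obtains $3r+s=25$ and $G\cong T_{r,s}$. The main obstacle I anticipate is the enumeration of $3$-regular multigraphs on four vertices together with the symmetry analysis — especially verifying that the ``two-doubles-two-singles'' case admits no order-$3$ automorphism and ensuring, in each sub-case, that the induced $\mathbb{Z}_{3}$-action on $G^{\ast}$ is genuinely non-trivial.
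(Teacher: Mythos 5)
Your proof is correct, but it takes a genuinely different route from the paper's. You pass to the topological core $G^{\ast}$ (suppress all degree-$2$ vertices), use the cyclomatic number $27-25+1=3$ to get $n^{\ast}+2$ edges on $n^{\ast}\leq 4$ vertices with minimum degree $3$ and no loops, classify the handful of admissible multigraphs, and then let the induced $\mathbb{Z}_3$-action decide which subdivisions can occur; the chain lengths and the relations $r+s=9$, $3r+s=25$ then drop out of the edge count. The paper instead works inside $G$ directly: it bounds the set $T$ of vertices of degree at least three by the handshake identity, splits into the cases $T\not\subseteq {\rm Fix}(\varphi)$ and $T\subseteq {\rm Fix}(\varphi)$, builds explicit shortest paths ($p_{zu},p_{zv},p_{zw},p_{uv},\dots$, or a maximal path in $G-{\rm Fix}(\varphi)$ together with a fourth path obtained from the fan lemma), proves the pairwise intersection claims by hand, and shows $G$ equals the union of these paths. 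Your approach buys a cleaner, more conceptual case analysis (the core is canonical, so the symmetry analysis reduces to order-$3$ automorphisms of at most four small multigraphs) and it gives the general statement for $n$ vertices and $n+2$ edges with no extra work, since $25$ and $27$ only enter at the very end --- this is exactly the generalization the paper proves in Appendix A, but with the path-by-path verifications replaced by the classification of cores. The paper's argument, in exchange, is elementary and self-contained, never needing the (standard but unproved-here) facts that the core is well defined, inherits $2$-connectedness data, and that ${\rm Aut}(G)$ acts on it. One small point you should make explicit: for the doubled-$4$-cycle core you only invoke the vertex automorphism group (Klein four); to conclude that the induced action on $G^{\ast}$ is trivial you must also note, as you did in the $n^{\ast}=3$ case, that its parallel-edge classes have sizes $1$ and $2$, so an order-$3$ element cannot move edges either, and then your path-rigidity reduction forces $\varphi={\rm id}$, the desired contradiction.
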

We will defer the proof of Lemma \ref{lem:graphs} to the Appendix A. For
$M\in \lKfiven$ with $f_0(M)=15$, by Proposition \ref{prop:degthree}, we see that the set of facets
with degree three or more in $\Lambda(M)$ together must contain all the
vertices of $M$. But the graphs $T_{r,s}$ contain only two vertices of
degree three or more, and hence can contain at most $2\times 6=12$ vertices,
which cannot cover all vertices of $M$, as $f_0(M)=15$. Thus, we have a
further constraint on the graph $\Lambda(M)$. In particular from
Proposition \ref{prop:autdual} and Lemma \ref{lem:graphs} we have,
\begin{lemma}\label{lem:dualgraph}
Let $M\in \lKfiven$ with $f_0(M)=15$ and ${\rm Aut}(M)\supseteq
\mathbb{Z}_3$. Then $\Lambda(M)\cong G_{r,s}$ for some $r,s>0$ with $r+s=9$.
\end{lemma}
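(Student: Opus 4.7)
The plan is to apply Lemma \ref{lem:graphs} to funnel $\Lambda(M)$ into one of two templates, and then use the vertex-cover consequence of Proposition \ref{prop:degthree} to eliminate the $T_{r,s}$ branch, leaving only $G_{r,s}$ with $r+s=9$.

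First I would combine the inputs already on the table. By Proposition \ref{prop:nsnon}, $\Lambda(M)$ is $2$-connected with $n(n-d)/(d+1)=15\cdot 10/6=25$ vertices and $n(n-d-1)/d=15\cdot 9/5=27$ edges. By Proposition \ref{prop:autdual}, ${\rm Aut}(M)\supseteq\mathbb{Z}_3$ forces $\mathbb{Z}_3\subseteq{\rm Aut}(\Lambda(M))$. Lemma \ref{lem:graphs} then gives a dichotomy: $\Lambda(M)\cong G_{r,s}$ with $r+s=9$, or $\Lambda(M)\cong T_{r,s}$ with $3r+s=25$. So the whole task reduces to excluding the second alternative.

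Next I would rule out $T_{r,s}$ by a short degree count. Inspecting the construction in Example \ref{eg:trs}, every internal vertex of the four paths $p_0,p_1,p_2,p_3$ has degree exactly two; only the ``hub'' endpoints $x_1$ and $x_s$ can have degree three or more. Thus for $s\geq 2$ there are exactly two vertices of degree at least three, and for $s=1$ only one (since the three outer paths close up into loops at $x_1$). In either situation, $T_{r,s}$ has at most two vertices of degree $\geq 3$.

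Finally I would invoke Proposition \ref{prop:degthree}: because $f_0(M)=15>2d+1=11$, the facets of $M$ whose corresponding vertices in $\Lambda(M)$ have degree at least three must jointly cover all $15$ vertices of $M$. Each facet of a $5$-dimensional complex contains $d+1=6$ vertices, so at most $2\times 6=12$ vertices of $M$ could be covered in the $T_{r,s}$ case — well short of $15$. This contradiction rules out $\Lambda(M)\cong T_{r,s}$, leaving $\Lambda(M)\cong G_{r,s}$ with $r+s=9$ as claimed. I do not anticipate any real obstacle: the argument is essentially a count, and the only subtlety is remembering to handle the degenerate $s=1$ case of $T_{r,s}$ separately (where the bound two vertices of high degree still holds, in fact strengthens).
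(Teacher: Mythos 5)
Your proposal is correct and follows essentially the same route as the paper: Proposition \ref{prop:nsnon} and Proposition \ref{prop:autdual} feed into Lemma \ref{lem:graphs}, and the $T_{r,s}$ alternative is discarded via Proposition \ref{prop:degthree} by the same count $2\times 6=12<15$. Your extra remark on the degenerate $s=1$ hub only strengthens the bound and changes nothing of substance.
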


The following result from \cite{nsnon} further restricts the structure of
$\Lambda(M)$, for $M\in \lKdn$. 

\begin{prop}\label{prop:pathprop}
Let $M\in \lKdn$ with $f_0(M) > 2d+1$. Let $u_0u_1\cdots u_r$ be a path in
$\Lambda(M)$ where all the internal vertices $u_i$, $1\leq i\leq r-1$, have
degree two in $\Lambda(M)$. Let $x_i$ be the unique element in
$u_{i-1}\backslash u_i$ for $1\leq i\leq r$. Then we have,
\begin{enumerate}[{\rm (a)}]
\item $x_1,\ldots, x_r$ are distinct,
\item $x_i\in u_0$ for $1\leq i\leq r$,
\item $r\leq d+1$.
\end{enumerate}
\end{prop}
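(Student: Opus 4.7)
The plan is to exploit the subtree $T_{x_i}$ of $\Lambda(M)$ induced by facets containing $x_i$, which by Proposition \ref{prop:nsnon}(c) has exactly $n-d$ vertices where $n = f_0(M)$; since $n > 2d+1$, we have $n-d \geq d+2$, comfortably larger than the bound $d+1$ targeted in (c). I would first establish an auxiliary claim: for each $i$ with $1 \leq i \leq \min(r, n-d)$, one has $x_i \in u_j$ for every $0 \leq j \leq i-1$. This goes by downward induction on $j$ starting at $j=i-1$ (immediate from the definition of $x_i$). For the step, assume $x_i \in u_{i-k+1}, \dots, u_{i-1}$ with $2 \leq k \leq i$ and suppose for contradiction that $x_i \notin u_{i-k}$. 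Then the path $P = u_{i-k+1}\, u_{i-k+2}\cdots u_{i-1}$ lies in $T_{x_i}$, while $u_{i-k}$ and $u_i$ do not. The endpoint $u_{i-k+1}$ has $\Lambda(M)$-degree two with neighbors $u_{i-k}, u_{i-k+2}$; only $u_{i-k+2}$ lies in $T_{x_i}$, so $u_{i-k+1}$ is a leaf of $T_{x_i}$, and the same reasoning at $u_{i-1}$ (using $u_i \notin T_{x_i}$) makes it a leaf as well. Interior vertices of $P$ have $\Lambda(M)$-degree two with both neighbors already in $T_{x_i}$, hence $T_{x_i}$-degree two. Therefore $P$ is a connected component of $T_{x_i}$, and connectedness forces $T_{x_i} = P$, giving $|T_{x_i}| = k-1 \leq i-1 \leq n-d-1$, which contradicts $|T_{x_i}| = n-d$.

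Next I would deduce (c): assume $r \geq d+2$. Since $n-d \geq d+2$, the auxiliary claim applies for $i = 1, \dots, d+2$, placing each $x_i$ in $u_0$. These are pairwise distinct, for if $x_i = x_j$ with $i < j \leq d+2$, the auxiliary claim applied to $j$ gives $x_j \in u_i$, contradicting $x_i \notin u_i$ from the definition of $x_i$. So $u_0$ would contain $d+2$ distinct vertices, contradicting $|u_0| = d+1$; hence $r \leq d+1$, which is (c). Parts (b) and (a) then follow: since $r \leq d+1 \leq n-d$, the auxiliary claim yields $x_i \in u_0$ for all $1 \leq i \leq r$, proving (b), and the same distinctness argument proves (a).

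The main obstacle is a slightly circular-looking dependence: the tree-size argument bounds things naturally by $n-d$ rather than directly by $d+1$, so neither (b) nor (c) is provable in isolation by the tree method alone. The resolution is to first establish the auxiliary claim on the long initial segment $1 \leq i \leq n-d$ — long enough precisely because $n-d \geq d+2$ — and then apply a pigeonhole step inside the small set $u_0$ to force $r \leq d+1$, after which (a) and (b) follow in full strength.
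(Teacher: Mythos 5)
Your argument is correct, but note that the paper itself gives no proof of this proposition: it is imported verbatim from \cite{nsnon}, so there is nothing internal to compare against line by line. What you prove is self-contained given exactly the ingredients the paper does supply, namely that $T_x$ is the \emph{connected} induced subgraph (a tree, since $\st{M}{x}$ is a stacked ball, Proposition \ref{prop:bdns}) and that $|V(T_x)|=n-d$ (Proposition \ref{prop:nsnon}(c)); your key step correctly exploits the degree-two hypothesis to show that if $x_i$ dropped out at $u_{i-k}$ then the segment $u_{i-k+1}\cdots u_{i-1}$ would be cut off from the rest of $T_{x_i}$, forcing $|V(T_{x_i})|\leq i-1<n-d$, and the pigeonhole step inside $u_0$ (which has only $d+1$ vertices) then yields (c), after which (a) and (b) follow in full. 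This is in the same spirit as, but stronger than, the paper's own Lemma \ref{lem:pathlemma2}, which only treats paths already known to be short ($r<d+1$, so that a single tree $T_z$ contains the whole path) and uses the oriented-edge labelling of Lemma \ref{lem:oriented}; your internal-degree-two hypothesis replaces the shortness assumption, which is precisely what makes the bound $r\leq d+1$ a conclusion rather than a hypothesis. One cosmetic slip: in the degenerate case $k=2$ the segment $P$ is the single vertex $u_{i-1}$ and its neighbor $u_{i-k+2}=u_i$ does \emph{not} lie in $T_{x_i}$, so $u_{i-1}$ is isolated rather than a leaf; the conclusion that $P$ is a full component of $T_{x_i}$ (hence $T_{x_i}=P$ by connectedness, contradicting $|V(T_{x_i})|=n-d$) is unaffected, but the sentence claiming ``only $u_{i-k+2}$ lies in $T_{x_i}$'' should be phrased to cover that case.
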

By Part (c) of the above proposition, $\Lambda(M)$ contains induced paths of
length at most $d+1$. For $M\in \lKfiven$, therefore, $\Lambda(M)$ contains
induced paths of length at most $6$. By Lemma \ref{lem:dualgraph}, we know
that $\Lambda(M)\cong G_{r,s}$ for some $r+s=9$. Since $G_{r,s}$ contains
the induced path of length at least $\max(r,s)$, we must that
$\max(r,s)\leq 6$. Together with the constraint $r+s=9$, we conclude
\begin{lemma}\label{lem:graphdual2}
Let $M\in \lKfiven$ with $f_0(M)=15$. If ${\rm Aut}(M)\supseteq
\mathbb{Z}_3$,  then $\Lambda(M)\cong G_{r,9-r}$ for some $r\in \{3,4,5,6\}$.
\end{lemma}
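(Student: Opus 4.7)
The plan is to combine Lemma \ref{lem:dualgraph} with Proposition \ref{prop:pathprop}(c). From Lemma \ref{lem:dualgraph} I already have that $\Lambda(M)\cong G_{r,s}$ for some positive integers $r,s$ with $r+s=9$, so the task reduces to bounding both $r$ and $s$ above by $6$.

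Proposition \ref{prop:pathprop}(c), applied with $d=5$, asserts that every path in $\Lambda(M)$ whose internal vertices all have degree two in $\Lambda(M)$ has length at most $6$. The main step is therefore to exhibit such a path of length $\max(r,s)$ inside $G_{r,s}$. The defining path $p_{zu} = z_0u_1\cdots u_r$ from Example \ref{eg:grs} is the natural candidate: its internal vertices $u_1,\ldots,u_{r-1}$ lie on none of the other five defining paths $p_{zv},p_{zw},p_{uv},p_{vw},p_{wu}$, so each has degree exactly $2$ in $G_{r,s}$. Proposition \ref{prop:pathprop}(c) therefore forces $r\le 6$, and by the symmetric argument applied to $p_{uv}=u_ru_{r+1}\cdots u_{r+s-1}v_r$ (whose internal vertices $u_{r+1},\ldots,u_{r+s-1}$ are again degree-$2$ vertices of $G_{r,s}$), I similarly obtain $s\le 6$.

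Combined with $r+s=9$ and $r,s\ge 1$, the constraints $r,s\le 6$ leave precisely $r\in\{3,4,5,6\}$, which is what the lemma asserts. The only piece needing care is the verification that the interior vertices of the two candidate paths really are degree-$2$ vertices of $G_{r,s}$; this is an immediate inspection of the six paths listed in Example \ref{eg:grs} (each $u_i$ for $i<r$ appears in $p_{zu}$ only, and each $u_j$ for $r<j<r+s-1$ appears in $p_{uv}$ only), so I do not expect any serious obstacle. The lemma is essentially a direct corollary of Lemma \ref{lem:dualgraph} and Proposition \ref{prop:pathprop}(c).
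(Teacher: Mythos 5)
Your proposal is correct and follows essentially the same route as the paper: starting from Lemma \ref{lem:dualgraph} ($\Lambda(M)\cong G_{r,s}$, $r+s=9$) and applying Proposition \ref{prop:pathprop}(c) with $d=5$ to the degree-two paths $p_{zu}$ and $p_{uv}$ of $G_{r,s}$ to get $\max(r,s)\le 6$, hence $r\in\{3,4,5,6\}$. The only blemish is the harmless off-by-one in your parenthetical (the internal vertices of $p_{uv}$ are $u_j$ for $r<j\le r+s-1$, all of degree two), which does not affect the argument.
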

 
%\subsection{Efficient Representation of Pseudomanifold}
%Before proceeding further, we introduce a more efficient combinatorial
%description of a pseudomanifold. We identify facets of a pseudomanifod $X$,
%with the vertices of it's dual graph $\Lambda(X)$. With each edge $uv\in
%E(\Lambda(X))$, we associate the tuple $(x,y)$, where $x$ is the unique
%element of $v\backslash u$ and $y$ is the unique element of $u\backslash
%v$. Another way, is to think of $x$ as the symbol that leaves along
%directed edge $\overrightarrow{uv}$ and $y$ as the symbol that leaves along the
%directed edge $\overrightarrow{vu}$. Thus once we know the vertices in a facet of $X$, we
%can determine vertices in any other facet, given the labels $(x,y)$ over
%each edge in $\Lambda(X)$. This description will be particularly useful for
%the dual graphs that we shall encounter later.

Let $M\in \lKd$ and for each $x\in V(M)$, let $T_x$ denote the subtree of
$\Lambda(M)$ induced by the facets of $M$ containing $x$. We note the
following:
\begin{lemma}\label{lem:treelemma}
For $M\in \lKd$, let $T_x$ for $x\in V(M)$ be as defined. Then,
\begin{enumerate}[{\rm (a)}]
\item $T_x\neq T_y$ for $x\neq y$,
\item If $\sigma\in V(\Lambda(M))$ is a leaf of some tree $T_x$, then
$d_{\Lambda(M)}(\sigma) < 3$.
\end{enumerate}
\end{lemma}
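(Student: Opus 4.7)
The plan is to prove parts (a) and (b) by rather different arguments: part (b) reduces to a local $(d-1)$-face count at the leaf facet, while part (a) hinges on the pseudomanifold connectivity of $\Lambda(M)$.

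For part (b), I would enumerate the $d+1$ codimension-one subfaces of $\sigma$. Exactly one of them---namely $\sigma \setminus \{x\}$---omits $x$; the remaining $d$ all contain $x$. Any interior $(d-1)$-face containing $x$ lies in two facets of $M$, both of which contain $x$ and hence belong to $V_x$, so it gives an edge of $T_x$ incident to $\sigma$. Since $\sigma$ is a leaf of $T_x$, precisely one of the $d$ ``$x$-containing'' $(d-1)$-subfaces of $\sigma$ is interior, and the remaining $d-1$ lie on $\partial M$. The only further edge of $\Lambda(M)$ that can touch $\sigma$ is the one through $\sigma \setminus \{x\}$, so $d_{\Lambda(M)}(\sigma) \leq 2 < 3$.

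For part (a), I would assume for contradiction that $T_x = T_y$ with $x \neq y$. Since both are induced subgraphs of $\Lambda(M)$, the equality forces $V(T_x) = V(T_y)$, i.e.\ every facet of $M$ contains $x$ iff it contains $y$. Write any such facet as $F = \{x,y\} \cup S$ with $|S| = d-1$. The $(d-1)$-face $\{x\} \cup S$ omits $y$, yet any facet containing it must contain $x$, hence $y$, hence equal $F$; thus $\{x\} \cup S \in \partial M$, and symmetrically $\{y\} \cup S \in \partial M$. It follows that every $\Lambda(M)$-neighbor of $F$ shares with $F$ a $(d-1)$-face containing both $x$ and $y$, and so lies in the set of facets containing both. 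Propagating across the star, the dual subgraph induced on the facets of $\st{M}{xy}$ is a full connected component of $\Lambda(M)$; connectivity of $\Lambda(M)$ then forces $M = \st{M}{xy}$.

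The main obstacle is that $M = \st{M}{xy}$ is not intrinsically absurd---a single stacked $d$-ball really does equal a star---so the contradiction must come from the ambient setting. In our application $M \in \lKfiven$ is neighborly with $f_0(M) = 15$, so Proposition \ref{prop:nsnon} gives $f_d(M) = 25$ while any star contains only $f_0(M) - d = 10$ facets; hence $M \neq \st{M}{xy}$ and the proof closes. More generally, the lemma is applied under hypotheses (e.g.\ $M \in \lKdn$ with $f_0(M) > 2d+1$) that make $f_d(M) > |V_x|$ automatic via Proposition \ref{prop:nsnon}, ruling out the degenerate case.
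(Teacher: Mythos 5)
Your argument for (b) matches the paper's: both count the $(d-1)$-subfaces of $\sigma$ and use that $\sigma\setminus\{x\}$ is the unique one omitting $x$, so a third $\Lambda(M)$-neighbour of $\sigma$ would force $\sigma\setminus\{x\}$ into three facets, violating the weak pseudomanifold condition.

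For (a), the paper argues: if $T_x=T_y$ and $T_x\neq\Lambda(M)$, pick an edge $uv$ of $\Lambda(M)$ with $u\in V(T_x)$ and $v\notin V(T_x)$; then $\{x,y\}\subseteq u\setminus v$, which is impossible for adjacent facets. Your propagation argument is the same observation globalized: any $\Lambda(M)$-neighbour of a facet containing $\{x,y\}$ must again contain $\{x,y\}$, and connectivity of $\Lambda(M)$ gives $T_x=\Lambda(M)$, i.e.\ $M=\st{M}{xy}$. You are right to treat $T_x=\Lambda(M)$ as a case needing its own refutation: the paper asserts $T_x\neq\Lambda(M)$ without justification, and for general $M\in\lKd$ it can genuinely fail. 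For instance, if $Y$ is a stacked $(d-1)$-ball with exactly two facets and $M=x\ast Y\in\lKd$, then $T_x=T_z=\Lambda(M)$ for every vertex $z$ lying in both facets of $Y$, and part (a) is false. In the setting where the lemma is actually invoked ($M\in\lKfiven$ with $f_0(M)=15$, so $f_d(M)=25$ while $|V(T_x)|=10$ by Proposition \ref{prop:nsnon}), that case is vacuous, exactly as you note. So your proposal follows the same route as the paper but fills a small gap the paper leaves implicit, and it correctly flags that the lemma as literally stated needs the ambient facet count to hold.
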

\begin{proof}
We first prove (a). Suppose $T_x=T_y$ for some $x\neq y$. Since $T_x\neq
\Lambda(M)$, and $\Lambda(M)$ is connected graph, there exists an edge
$uv$ in $\Lambda(M)$ such that $u\in V(T_x)$ and $v\neq V(T_x)$. Now since
$u\in V(T_x)=V(T_y)$, we have $\{x,y\}\subseteq u$. Since $v\neq T_x,T_y$, we have
$\{x,y\}\cap v=\emptyset$. Thus $\{x,y\}\subseteq u\backslash v$, which is a
contradiction as $uv$ is an edge in $\Lambda(M)$. 

To prove (b), suppose $\sigma$ is a leaf of tree $T_x$ and
$d_{\Lambda(M)}(\sigma) \geq 3$. Since at most one neighbor of $\sigma$ is
on $T_x$, we have at least two neighbors of $\sigma$, say $\alpha$ and
$\beta$ which are not on $T_x$. Thus $x\not\in \alpha,\beta$. But then
$\sigma\backslash \{x\}\subseteq \alpha,\beta$. This is a contradiction as
$\sigma\backslash \{x\}$ is a face of co-dimension one which is contained
in three facets, namely $\sigma,\alpha$ and $\beta$. This completes the
proof.
\end{proof}

Let $M\in \lKd$ and let $\{T_x: x\in V(M)\}$ be the collection of trees as
before. Clearly the trees $T_x$ and $T_y$ intersect if and only if $x\in
\st{M}{y}$. Thus the number of trees that a tree $T_x$ intersects (counting
itself), is same as the number of vertices in it's star. From Proposition
\ref{prop:bdns} we have (since $\st{M}{x}$ is a stacked $d$-ball),
\[  f_0(\st{M}{x}) = f_d(\st{M}{x}) + d.  \]
However, $f_d(\st{M}{x})$ is the number of vertices in the tree $T_x$, and
hence we can write the above as,
\begin{equation}
\text{Number of trees intersected by } T_x = |V(T_x)| + d.
\end{equation}
We can however, count the number of trees intersected by $T_x$ in the
following way. Designate a fixed vertex $r\in V(T_x)$ to be the {\em root}.
Next we orient each edge $uv$ of $T_x$ as $\overrightarrow{uv}$ where $v$
is the vertex nearer to $r$ (see Figure \ref{fig:oriented}). To each oriented edge $\overrightarrow{uv}$ of
$T_x$, we associate a label $l(\overrightarrow{uv})=y$ where $y$ is
the unique element of $u\backslash v$. Now there are $d+1$ trees that
intersect $T_x$ (including itself), at vertex $r$. For any tree $T_y$ which
intersects $T_x$, but does not contain $r$, there must be an oriented edge
$e$ in $T_x$ such that $l(e)=y$. Thus the number of trees that $T_x$
intersects is at most $(d+1) + |V(T_x)|-1 = |V(T_x)|+d$. For the equality
to hold, which it should for $M\in \lKd$, all the edge labels must be
distinct, and different from vertices of the facet $r$. We note this
observation as the following lemma.

\begin{figure}[htbp]
\centering
\begin{tikzpicture}[scale=1.5]
\coordinate (R) at (0,0);
\coordinate (A) at (-0.1,1);
\coordinate (B) at (-1.1,1.5);
\coordinate (C) at (-0.7,-0.5);
\coordinate (D) at (-1.4,-1);
\coordinate (E) at (-2.2,-1);
\coordinate (F) at (1.2,0.3);
\coordinate (G) at (2.2,1.1);
\coordinate (H) at (2.3,0.2);
\coordinate (I) at (2.4,-0.8);

\fill (R) circle[radius=1pt] node {};
\fill (A) circle[radius=1pt] node {};
\fill (B) circle[radius=1pt] node {};
\fill (C) circle[radius=1pt] node {};
\fill (D) circle[radius=1pt] node {};
\fill (E) circle[radius=1pt] node {};
\fill (F) circle[radius=1pt] node {};
\fill (G) circle[radius=1pt] node {};
\fill (H) circle[radius=1pt] node {};
\fill (I) circle[radius=1pt] node {};

\draw[reverse directed] (R) -- (A);
\draw[reverse directed] (A) -- (B);
\draw[reverse directed] (R) -- (C);
\draw[reverse directed] (C) -- (D);
\draw[reverse directed] (D) -- (E);
\draw[reverse directed] (R) -- (F);
\draw[reverse directed] (F) -- (G);
\draw[reverse directed]	(F) -- (H);
\draw[reverse directed] (H) -- (I);

\draw (R) node[below right] {$r$};
\end{tikzpicture}
\caption{\small Oriented tree with root $r$}
\label{fig:oriented}
\end{figure}
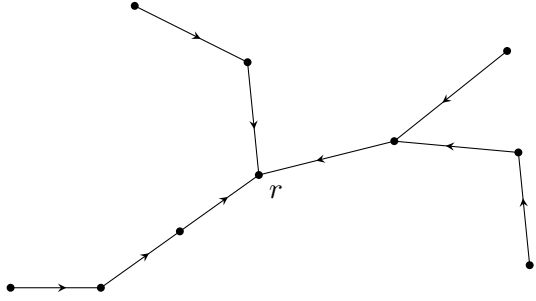

\begin{lemma}\label{lem:oriented}
For $M\in \lKd$ and $x\in V(M)$, let $T_x$ be an oriented tree with root
$r$, as described
before. Then the labels on the oriented edges are distinct, and different
from vertices of the facet $r$.
\end{lemma}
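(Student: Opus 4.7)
The plan is to formalize the counting sketch given by the author in the paragraph preceding the lemma, and then extract the distinctness conclusion by insisting on equality in an upper bound.

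First I would pin down the exact count of trees meeting $T_x$. Since $M\in\lKd$, the star $\st{M}{x}$ is a stacked $d$-ball, so by Proposition \ref{prop:bdns}(b) we have $f_0(\st{M}{x})=f_d(\st{M}{x})+d$. Now $f_d(\st{M}{x})=|V(T_x)|$ by definition, while $f_0(\st{M}{x})$ is precisely the number of vertices $y\in V(M)$ that lie in some facet of $V(T_x)$, i.e.\ the number of trees $T_y$ (including $T_x$ itself) that intersect $T_x$. So exactly $|V(T_x)|+d$ trees meet $T_x$.

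Next I would produce a matching upper bound from the oriented labeling. Split the trees meeting $T_x$ into those that contain the root $r$ and those that do not. The first class corresponds bijectively to the $d+1$ vertices of $r$, contributing $d+1$ trees. For any tree $T_y$ of the second class, pick a facet $u_0\in V(T_x)\cap V(T_y)$; then $y\in u_0$ but $y\notin r$, so walking along the unique path in $T_x$ from $u_0$ toward $r$ there is a first oriented edge $\overrightarrow{uv}$ with $y\in u$ and $y\notin v$. Because $u\cap v$ is a codimension-one face shared by two facets, $u\setminus v$ is a singleton, so $l(\overrightarrow{uv})=y$. Thus each tree in the second class is witnessed by at least one edge of $T_x$ carrying its defining vertex as a label, and since $T_x$ has $|V(T_x)|-1$ edges, the second class contributes at most $|V(T_x)|-1$ trees.

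Adding the two contributions gives an upper bound of $(d+1)+(|V(T_x)|-1)=|V(T_x)|+d$, matching the exact count from the first step. Equality must hold throughout, which forces every label to be realized by a unique oriented edge (distinctness of labels) and forces no label to coincide with a vertex of $r$ (otherwise some tree containing $r$ would be double-counted). This is precisely the conclusion of the lemma. The only mildly delicate point, which I would spell out carefully, is the codimension-one argument that $u\setminus v$ is a single vertex, ensuring the label $l(\overrightarrow{uv})$ is well-defined and equals the intended $y$; the rest is a clean double-counting argument.
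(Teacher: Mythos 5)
Your proof is correct and follows essentially the same route as the paper: the paper establishes the lemma by exactly this double count, comparing the exact number $|V(T_x)|+d$ of trees meeting $T_x$ (from Proposition \ref{prop:bdns} applied to the stacked ball $\st{M}{x}$) with the upper bound $(d+1)+(|V(T_x)|-1)$ obtained from the root and the edge labels, and forcing equality. Your only addition is to spell out why a tree $T_y$ not containing $r$ is witnessed by an edge labeled $y$ (the first edge along the path toward $r$ where $y$ drops, using that adjacent facets differ in a single vertex), a step the paper simply asserts.
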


\begin{lemma}\label{lem:pathlemma2}
Let $M\in \lKd$ and let $u_0u_1\cdots u_r$ be a path in $\Lambda(M)$ with
$r<d+1$. Let $x_i$ be the unique element in $u_{i-1}\backslash u_i$, and
$y_i$ be the unique element in $u_i\backslash u_{i-1}$ for  $1\leq i\leq
r$. Then we have,
\begin{enumerate}[{\rm (a)}]
\item $\{x_1,x_2,\ldots,x_r\}\subseteq u_0\backslash u_r$, and
\item $\{y_1,y_2,\ldots,y_r\}\subseteq u_r\backslash u_0$.
\end{enumerate}
\end{lemma}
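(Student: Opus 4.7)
The plan is to prove the lemma by induction on $r$, using the contiguity property of the vertex-trees $T_v$ together with Lemma \ref{lem:oriented}. The base cases $r \in \{0, 1\}$ hold directly from the definitions. For the inductive step, I would use two key observations: (i) since $T_v$ is a subtree of $\Lambda(M)$ for every $v \in V(M)$, the set $\{l : v \in u_l\}$ is a contiguous subinterval of $\{0, 1, \ldots, r\}$; and (ii) since $r < d+1$, we have $|u_0 \cap u_r| \geq (d+1) - r \geq 1$.

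Observation (i) yields the easy halves of both parts: $x_i \notin u_r$ follows from $x_i \notin u_i$ together with contiguity, and symmetrically $y_i \notin u_0$. The remaining content is to show $x_i \in u_0$ for every $i$, i.e.\ that no vertex is introduced and later removed along the path, which amounts to $x_i \neq y_j$ for $j < i$. The natural strategy: using (ii), pick $v \in u_0 \cap u_r$; by contiguity $v \in u_l$ for all $l$, so $T_v$ contains the entire path $u_0 \cdots u_r$. Applying Lemma \ref{lem:oriented} to $T_v$ rooted at a facet $u_j$ on the path shows that the labels $x_1, \ldots, x_j, y_{j+1}, \ldots, y_r$ are pairwise distinct and disjoint from the vertex set of $u_j$. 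Collecting these conclusions across all choices of $j$ rules out every coincidence $x_i = y_k$ with $i < k$. Part (b) would then follow from part (a) by applying the argument to the reversed path $u_r u_{r-1} \cdots u_0$, under which $x_l$ and $y_{r-l+1}$ swap roles.

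The main obstacle I anticipate is the directional asymmetry of what Lemma \ref{lem:oriented} directly yields: rooting $T_v$ at $u_j$ only gives distinctness $x_i \neq y_k$ for $i \leq j < k$, which is already forced by contiguity; the problematic ``backward'' coincidence $x_i = y_j$ with $j < i$ is compatible with contiguity and requires a separate argument. To close this gap I would apply Lemma \ref{lem:oriented} to the tree $T_{y_j}$ itself, whose path-portion $u_j, u_{j+1}, \ldots, u_{i-1}$ is a proper subpath of $u_0 \cdots u_r$, and combine with Lemma \ref{lem:treelemma}(b) constraining the $\Lambda(M)$-degree of the leaves of $T_{y_j}$; this additional structural input should force a contradiction whenever such a backward coincidence is present.
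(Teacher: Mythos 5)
Your first two steps are sound and, in fact, reproduce the paper's own argument: since $r<d+1$ there is a vertex $z\in u_0\cap\cdots\cap u_r$, the whole path lies in the tree $T_z$, and applying Lemma \ref{lem:oriented} to $T_z$ rooted at the various $u_j$ gives exactly what you say it gives: the $x_i$ are distinct and avoid $u_r$, the $y_i$ are distinct and avoid $u_0$, and no coincidence $x_i=y_k$ with $i<k$ can occur. (Your contiguity claim (i) is not justified as stated -- $\Lambda(M)$ is not a tree and the path need not lie in $T_v$, so ``$T_v$ is a subtree'' does not by itself make $\{l:v\in u_l\}$ an interval -- but for vertices occurring on the path this can be recovered from the rooted applications of Lemma \ref{lem:oriented}, so this is a repairable point, not the main one.) You have also correctly isolated the remaining content: the ``backward'' coincidence $x_i=y_j$ with $j<i$, i.e.\ a vertex introduced and later removed along the path, which is precisely what the inclusions $x_i\in u_0$ and $y_i\in u_r$ assert.

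That last case is where the proposal stops being a proof: the plan ``apply Lemma \ref{lem:oriented} to $T_{y_j}$ and combine with Lemma \ref{lem:treelemma}(b)'' is never carried out, and as it stands it cannot be, because those two lemmas (together with everything else you use) are insufficient at the stated level of generality. Consider the pure $3$-complex with facets $\{z,b,p,a\}$, $\{z,b,p,w\}$, $\{z,p,w,c\}$, $\{z,p,c,f\}$. It is a stacked $3$-ball, every vertex link is a stacked $2$-ball, so it lies in $\overline{\mathcal K}(3)$; its dual graph is the path $u_0u_1u_2u_3$ with $r=3<d+1$, and here $x_3=w=y_1\notin u_0$, while Lemma \ref{lem:oriented} holds for every tree $T_v$ and every choice of root, and Lemma \ref{lem:treelemma}(b) holds as well (the leaves of $T_w$ have degree $2$ in $\Lambda(M)$). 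So any argument ruling out the backward coincidence must import hypotheses beyond $M\in\lKd$ -- in the situations where the paper invokes the lemma one has $M$ neighborly with $f_0(M)>2d+1$, $\Lambda(M)$ two-connected and $|V(T_x)|=f_0(M)-d$ for every vertex, and some such input is genuinely needed -- and your proposal never brings any of these to bear. To be fair, you have put your finger on exactly the step that the paper's own two-line proof (orient $T_z$ at $u_r$ for (a), at $u_0$ for (b), apply Lemma \ref{lem:oriented}) also passes over in silence; but flagging the gap and gesturing at $T_{y_j}$ is not the same as closing it, so the proposal is incomplete precisely at the point that carries all the difficulty.
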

\begin{proof}
Since $r<d+1$, there exists $z\in M$ such that
$\{u_0,u_1,\ldots,u_r\}\subseteq V(T_z)$. Part (a) follows by orienting
$T_z$ with the $u_r$ as root and applying Lemma \ref{lem:oriented},  and
Part (b) follows similarly by orienting $T_z$ with
$u_0$ as root.
\end{proof}

Let $M\in \lKfiven$ with $f_0(M)=15$ and ${\rm Aut}(M)\supseteq \mathbb{Z}_3$.  Recall that
$\Lambda(M)\cong G_{r,9-r}$ for some
$3\leq r\leq 6$ (see Example \ref{eg:grs}). We identify the facets of
$M$ with vertices of $G_{r,9-r}$. Let $\varphi$ be an automorphism of $M$. Then $\varphi$ induces an
automorphism $\bar{\varphi}: u\mapsto \varphi(u)$ of $\Lambda(M)$.
Following lemma has been proved in \cite{bdnsinf}.
\begin{lemma}\label{lem:authomo}
Let $M\in \lKd$. For $\varphi\in {\rm Aut}(M)$, let $\bar{\varphi}$ be an
induced automorphism of $\Lambda(M)$. Then $\varphi\mapsto \bar{\varphi}$ is
an injective homomorphism from ${\rm Aut}(M)$ to ${\rm Aut}(\Lambda(M))$.
\end{lemma}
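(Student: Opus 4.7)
The plan is to verify three things in sequence: well-definedness of $\bar{\varphi}$ as a graph automorphism of $\Lambda(M)$, the homomorphism property of the assignment $\varphi\mapsto\bar{\varphi}$, and injectivity of that assignment. The first two steps are essentially formal; the real content sits in injectivity, where my strategy is to reduce everything to Lemma \ref{lem:treelemma}(a), which says that distinct vertices of $M$ give rise to distinct subtrees $T_x$.

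For well-definedness, since $\varphi\in {\rm Aut}(M)$ permutes the facets of $M$, the map $\bar{\varphi}$ is a bijection on $V(\Lambda(M))$. To see it preserves the edge relation of $\Lambda(M)$, I note that $uv$ is an edge iff $|u\cap v|=d$, and $|\varphi(u)\cap\varphi(v)|=|\varphi(u\cap v)|=|u\cap v|$ since $\varphi$ acts injectively on vertices; applying the same observation to $\varphi^{-1}$ gives both directions, so $\bar{\varphi}\in {\rm Aut}(\Lambda(M))$. The homomorphism property is the direct computation $\overline{\varphi\circ\psi}(u)=(\varphi\circ\psi)(u)=\varphi(\psi(u))=\bar{\varphi}(\bar{\psi}(u))$ for every facet $u$.

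For injectivity, I would assume $\bar{\varphi}$ is the identity on $\Lambda(M)$, i.e., $\varphi(u)=u$ as a set for every facet $u$, and aim to show $\varphi(x)=x$ for every $x\in V(M)$. The key step is the identity $T_x=T_{\varphi(x)}$: given any facet $u$, if $x\in u$ then $\varphi(x)\in \varphi(u)=u$, placing $u$ in $V(T_{\varphi(x)})$; the reverse inclusion follows by running the same argument with $\varphi^{-1}$, which also fixes every facet setwise. Hence $V(T_x)=V(T_{\varphi(x)})$, and since each $T_y$ is by definition the induced subgraph of $\Lambda(M)$ on its vertex set, the two trees coincide. Lemma \ref{lem:treelemma}(a) then immediately forces $\varphi(x)=x$.

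The only part that warrants care is the invocation of Lemma \ref{lem:treelemma}(a): its proof as written silently uses $T_x\neq \Lambda(M)$ (so that an edge leaving $V(T_x)$ exists), a condition that could fail only if some vertex of $M$ lies in every facet. This degenerate situation does not arise for the complexes under consideration in this paper, and since Lemma \ref{lem:treelemma} has already been established we may apply it as a black box.
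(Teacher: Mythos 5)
Your argument is correct. Note that the paper itself gives no proof of Lemma \ref{lem:authomo}; it is quoted from \cite{bdnsinf}, so there is nothing internal to compare against. Your route is the natural one: well-definedness and the homomorphism property are formal, and injectivity reduces to showing the kernel is trivial, which you do by observing that if $\bar{\varphi}$ fixes every facet setwise then $V(T_x)=V(T_{\varphi(x)})$ for every vertex $x$, whence $T_x=T_{\varphi(x)}$ and Lemma \ref{lem:treelemma}(a) forces $\varphi(x)=x$. Your caveat is well taken and is not a defect of your proof: the hypothesis $T_x\neq\Lambda(M)$ is already used silently in the paper's proof of Lemma \ref{lem:treelemma}(a) (it can fail only when some vertex lies in every facet, e.g.\ a single $d$-simplex, in which case both lemmas as literally stated degenerate), and this situation is excluded for the neighborly $15$-vertex complexes actually treated, so applying Lemma \ref{lem:treelemma} as a black box is exactly consistent with the paper's usage.
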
  
Now let $\varphi$ be an order three automorphism of $M$. Then, it induces
an  order three automorphism $\bar{\varphi}$ of $\Lambda(M)$. Since
$\Lambda(M)\cong G_{r,9-r}$, $\bar{\varphi}$ is an order three automorphism
of $G_{r,9-r}$. Without loss of generality, we may assume $\varphi$ is such
that, $\bar{\varphi} = \prod_{i=1}^{8}(u_i,v_i,w_i)$. We show that
$\varphi$ does not have any fixed points.
\begin{lemma}\label{lem:fixedpoints}
Let $M\in \lKfiven$ with $f_0(M)=15$ and ${\rm Aut}(M)\supseteq \mathbb{Z}_3$. If $\varphi$
is an order three automorphism of $M$, then $\varphi$ has no fixed points.
\end{lemma}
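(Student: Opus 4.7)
The plan is to localise all fixed vertices of $\varphi$ inside the facet $z_0$, and then use Lemma~\ref{lem:treelemma}(a) together with a combinatorial analysis of $\bar\varphi$-invariant subtrees to forbid the existence of any fixed vertex. The induced map $\bar\varphi=\prod_{i=1}^{8}(u_i,v_i,w_i)$ on $\Lambda(M)\cong G_{r,9-r}$ (with $r\in\{3,4,5,6\}$) has $z_0$ as its unique fixed vertex, and partitions the remaining $24$ vertices into $8$ orbits $O_j:=\{u_j,v_j,w_j\}$, $1\le j\le 8$. Hence any $\bar\varphi$-invariant subset of $V(\Lambda(M))$ whose cardinality is not divisible by $3$ must contain $z_0$. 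If $x\in V(M)$ is fixed by $\varphi$, then $T_x$ is $\bar\varphi$-invariant with $|V(T_x)|=f_0(M)-d=10$ by Proposition~\ref{prop:nsnon}(c); since $10\equiv 1\pmod 3$, we conclude $z_0\in V(T_x)$, i.e.\ $x\in z_0$.

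Writing $F$ for the set of fixed vertices of $\varphi$, this shows $F\subseteq z_0$, so $|F|\le 6$. Since every $\varphi$-orbit on $V(M)$ has size $1$ or $3$ and $|V(M)|=15$, we also have $|F|\equiv 0\pmod 3$, whence $|F|\in\{0,3,6\}$. Assume for contradiction that $|F|\ge 3$. Since $|u_1\cap z_0|=5$ and $|z_0\setminus F|\le 3$, we get $|u_1\cap F|\ge 5-3=2$. Pick distinct vertices $f_1,f_2\in u_1\cap F$. Each tree $T_{f_i}$ contains $z_0$ and $u_1$, and by $\bar\varphi$-invariance it also contains $v_1=\bar\varphi(u_1)$ and $w_1=\bar\varphi^2(u_1)$. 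The remaining $10-4=6$ vertices of $T_{f_i}$ must form two further $\bar\varphi$-orbits $O_j, O_k$ with $j,k\in\{2,\ldots,8\}$ and $j\ne k$.

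To identify $O_j,O_k$ we examine the orbit adjacencies in $G_{r,9-r}$. No edge of $G_{r,9-r}$ lies inside an orbit $O_j$, and two distinct orbits $O_a,O_b$ are joined by some edge of $G_{r,9-r}$ precisely when $\{a,b\}=\{i,i+1\}$ for some $0\le i\le 7$ (with $O_0:=\{z_0\}$) or $\{a,b\}=\{r,8\}$ (via the ``twisted'' edges $u_{r+s-1}v_r$, $v_{r+s-1}w_r$, $w_{r+s-1}u_r$ that close the outer cycle). In particular, $O_1$'s only orbit-neighbours are $O_0$ and $O_2$, and since $r\ge 3$ the orbit $O_2$'s only orbit-neighbours are $O_1$ and $O_3$. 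For the induced subgraph of $\Lambda(M)$ on $\{z_0\}\cup O_1\cup O_j\cup O_k$ to be connected, $\{O_j,O_k\}$ must contain $O_2$ (to attach to $O_1$), forcing $2\in\{j,k\}$, and the remaining orbit must attach to $\{O_0,O_1,O_2\}$, forcing $3\in\{j,k\}$. Hence $\{j,k\}=\{2,3\}$, which gives $T_{f_1}=T_{f_2}=\{z_0\}\cup O_1\cup O_2\cup O_3$, contradicting Lemma~\ref{lem:treelemma}(a). The main technical point is the orbit-adjacency description of $G_{r,9-r}$; once that is in place, the rest is a short consistency check.
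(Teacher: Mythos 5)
Your proof is correct and follows essentially the same line as the paper's: both use the $\bar\varphi$-invariance of $V(T_x)$ together with $10\equiv 1\pmod 3$ to force $z_0\in V(T_x)$, both use the orbit-adjacency structure of $G_{r,9-r}$ with $r\ge 3$ to pin down $V(T_x)=\{z_0\}\cup O_1\cup O_2\cup O_3$, and both reach a contradiction via Lemma~\ref{lem:treelemma}(a). The only variation is that you insert a counting step ($|u_1\cap F|\ge 2$) to explicitly place two fixed points in $u_1$, whereas the paper bypasses this by observing directly that the connectivity argument already forces $V(T_x)$ to be the same set for \emph{any} fixed point $x$, and then simply uses that $|F|\ge 1$ implies $|F|\ge 3$.
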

\begin{proof}
By Lemma \ref{lem:graphdual2}, $\Lambda(M)\cong G_{r,9-r}$ for some $3\leq r\leq 6$. We
identify facets of $M$ with vertices of $G_{r,9-r}$. Without loss of generality assume $\varphi$ induces
the automorphism $\bar{\varphi}=\prod_{i=1}^8 (u_i,v_i,w_i)$ of
$G_{r,9-r}$. Now the orbits of $\varphi$ are either singleton or contain $3$
elements. Let $x$ be a fixed point of $\varphi$. Let $V_x$ be the facets of
$M$ containing $x$. By definition of $\bar{\varphi}$, we have
$\bar{\varphi}(V_x)=V_x$. Thus $V_x$ is union of orbits of
$\bar{\varphi}=\prod_{i=1}^8 (u_i,v_i,w_i)$. Since $|V_x|=10$ and each
orbit of $\bar{\varphi}$ has cardinality three or one, $V_x$ must contain
a fixed point of $\bar{\varphi}$. Since $z_0$ is the only fixed point of
$\bar{\varphi}$, we conclude $z_0\in V_x$. Also, observe that we must have,
\[ |V_x\cap \{u_1,\ldots,u_8\}| = |V_x\cap \{v_1,\ldots,v_8\}|
   = |V_x\cap \{w_1,\ldots, w_8\}|. \]
Thus we must have $|V_x\cap \{u_1,\ldots,u_8\}|=3$. Since $V_x$ induces a
connected subgraph of $G_{r,9-r}$ and $r\geq 3$, we have $V_x\cap
\{u_1,\ldots, u_8\}=\{u_1,u_2,u_3\}$. Similarly, $V_x\cap \{v_1,\ldots,
v_8\}=\{v_1,v_2,v_3\}$ and $V_x\cap \{w_1,\ldots, w_8\}=\{w_1,w_2,w_3\}$.
Thus $V_x = \{z_0\}\cup (\cup_{i=1}^3 \{u_i,v_i,w_i\})$. However if $\varphi$
has one fixed point, it has at least three fixed points. Let $y\neq x$ be
another fixed point of $\varphi$. Then we will have $V_y=\{z_0\}\cup
(\cup_{i=1}^3 \{u_i,v_i,w_i\})=V_x$, and thus $T_x=T_y$, which contradicts
 Lemma \ref{lem:treelemma}. This proves the lemma.
\end{proof}

\begin{defn}[{Class ${\cal C}$ of complexes}]\label{defn:calC}{\rm
Let $V := \bigcup_{i=1}^5 \{a_i,b_i,c_i\}$ be a $15$-vertex set. Let $V$ be
ordered as $a_1<b_1<c_1<a_2<b_2<\cdots < b_5<c_5$. Let $\Phi :=
\prod_{i=1}^5 (a_i,b_i,c_i)$. We will denote the orbit $\{a_i,b_i,c_i\}$ of
$\Phi$ as $\Phi_i$. Let us define the class ${\cal C}$ of
complexes as
\begin{equation}\label{eq:complexes}
{\cal C} := \{M\in \lKfiven: V(M)=V, \Phi\in {\rm Aut}(M)\}.
\end{equation}
}
\end{defn}
We notice that
any $M\in \lKfiven$ with $f_0(M)=15$ and ${\rm Aut}(M)\supseteq
\mathbb{Z}_3$ is isomorphic to a member of the collection ${\cal C}$.
Therefore it suffices to enumerate ${\cal C}$ up to isomorphism. Before we
proceed with enumeration, we consider an efficient string representation of
a member of the class ${\cal C}$.

If $M\in {\cal C}$, then we know that $\Lambda(M)\cong G_{r,9-r}$ for some
$3\leq r\leq 6$. We identify the facets of $M$ with vertices of
$G_{r,9-r}$. We may assume that $\Phi$ induces the automorphism
$\prod_{i=1}^8 (u_i,v_i,w_i)$ of $G_{r,9-r}$. To each facet $u$ of $M$, we
associate a string $\bkt{u}=a_1a_2\ldots a_k$ where $a_1<a_2<\cdots <a_k$
are vertices of $u$. With the complex $M$, we associate the string
representation, which we denote by ${\rm str}(M)$ as,
\begin{equation}
{\rm str}(M) := \bkt{z_0}+\bkt{u_1}+\bkt{u_2}+\cdots+\bkt{u_8},
\end{equation}
where $+$ denotes the concatenation of strings. Note that the above
representation uniquely specifies a complex in ${\cal C}$ as remaining
facets may be obtained by applying the automorphism $\Phi$. Clearly any other
orbit such as $\{z_0,v_1,\ldots, v_8\}$ or $\{z_0,w_1,\ldots,w_8\}$ could have
been used. However, we assume that $\{z_0,u_1,\ldots, u_8\}$ is the one
that yeilds lexicographically least representation, i.e, we assume
$\bkt{u_1} < \min(\bkt{v_1},\bkt{w_1})$.

\subsection{Lexicographic Enumeration}
Throughout the remainder of the paper, let  $V,{\cal C}$ and $\Phi$ be as in Definition \ref{defn:calC}. We order ${\cal C}$ with
the ordering $M_1\leq M_2$ for $M_1,M_2\in {\cal C}$ if ${\rm str}(M_1)\leq
{\rm str}(M_2)$, where the ordering on strings is lexicographic. We say
$M\in {\cal C}$ is {\em minimal}, if $M\leq N$ for all $N\in {\cal C}$, such that $N$
is isomorphic to $M$. Thus each isomorphism class contains exactly one
minimal element. We will look for such minimal members of ${\cal C}$.

\begin{lemma}\label{lem:perms}
Let $M\in {\cal C}$. If $\Gamma$ is a permutation of $V$ such
that $\Phi\Gamma = \Gamma\Phi^i$ for some $i\in \{1,2\}$, then
$\Gamma(M)\in {\cal C}$.
\end{lemma}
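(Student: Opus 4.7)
The plan is to verify the three conditions defining $\mathcal{C}$ for the complex $\Gamma(M)$: namely that $\Gamma(M)$ lies in $\lKfiven$, that its vertex set is $V$, and that $\Phi$ is an automorphism of $\Gamma(M)$.

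For the first two conditions I would argue as follows. Since $\Gamma$ is a permutation of $V$, the map $\Gamma : V \to V$ induces a simplicial isomorphism from $M$ onto $\Gamma(M)$. Membership in $\lKfiven$ (being a neighborly member of $\lKd$ for $d=5$, with all vertex-links stacked $4$-balls) is an intrinsic combinatorial property preserved under simplicial isomorphism, so $\Gamma(M) \in \lKfiven$. Moreover $V(\Gamma(M)) = \Gamma(V(M)) = \Gamma(V) = V$, since $\Gamma$ permutes $V$. These steps are routine.

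The heart of the lemma is the third condition, and the main computation is essentially a one-line application of the commutation relation. We need to show $\Phi(\Gamma(M)) = \Gamma(M)$. Using $\Phi\Gamma = \Gamma\Phi^i$ as permutations of $V$, we compute
\[
\Phi(\Gamma(M)) \;=\; (\Phi\Gamma)(M) \;=\; (\Gamma\Phi^i)(M) \;=\; \Gamma(\Phi^i(M)).
\]
Since $\Phi \in \mathrm{Aut}(M)$ by assumption ($M \in \mathcal{C}$), we have $\Phi(M) = M$, and hence $\Phi^i(M) = M$ for either $i \in \{1,2\}$. Therefore $\Phi(\Gamma(M)) = \Gamma(M)$, which shows $\Phi \in \mathrm{Aut}(\Gamma(M))$.

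I do not foresee any real obstacle: the only subtle point is making sure the equation $\Phi\Gamma = \Gamma\Phi^i$, which is stated as an identity between permutations of $V$, legitimately transfers to an identity between the induced simplicial maps on complexes, but this is immediate because the induced action on simplices is just the pointwise image, so composition of permutations agrees with composition of the induced simplicial maps.
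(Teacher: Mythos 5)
Your proposal is correct and follows essentially the same route as the paper: both reduce the claim to showing that $\Phi$ maps $\Gamma(M)$ to itself, using the commutation relation $\Phi\Gamma=\Gamma\Phi^i$ together with $\Phi(M)=M$. The only cosmetic difference is that you apply the permutations to the whole complex at once, whereas the paper traces the identical computation facet by facet.
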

\begin{proof}
Clearly $\Gamma(M)\in \lKfiven$ for $M\in \lKfiven$.
We need to show that $\Phi$ is an automorphism of $\Gamma(M)$. Suppose
$\Phi\Gamma=\Gamma\Phi^i$, where $i\in \{1,2\}$. Let $u$ be a facet of
$\Gamma(M)$. Then $u=\Gamma(v)$ for
some facet $v$ of $M$. Now $\Phi(u)=\Phi\Gamma(v)=\Gamma\Phi^i(v)=\Gamma(w)$ where
$w=\Phi^i(v)$ is a facet of $M$, and hence $\Gamma(w)$ is a facet of $\Gamma(M)$.
This $\Phi$ maps facets to facets in $\Gamma(M)$, and hence is an automorphism
of $\Gamma(M)$. The lemma follows.
\end{proof}

Lemma \ref{lem:perms} implies that for a mimimal complex $M\in {\cal C}$
and for $\Gamma$, a permutation of $V$ satisfying the conditions in the
lemma, we must have $M\leq \Gamma(M)$. Next we define some permutations of
$V$, which satisfy the conditions in Lemma \ref{lem:perms}.
\begin{itemize}
\item $\pi_i := (a_i,b_i,c_i)$ for $1\leq i\leq 5$,
\item $\pi_{i,j} := (a_i,a_j)(b_i,b_j)(c_i,c_j)$ for $1\leq i<j\leq 5$,
\item $\gamma_{\alpha,\beta} :=
(\alpha_1,\beta_1)(\alpha_2,\beta_2)(\alpha_3,\beta_3)(\alpha_4,\beta_4)(\alpha_5,\beta_5)$,
where $\{\alpha_i,\beta_i\}\subseteq \{a_i,b_i,c_i\}$ for $1\leq i\leq 5$.
\end{itemize}
We can think of permutations $\pi_i$ as shifting the elements cyclically within
an orbit. The permuatations $\pi_{i,j}$ ``interchange'' the orbits $i$ and
$j$, while the permutations $\gamma_{\alpha,\beta}$ interchange an adjacent pair in
each of the orbits. We will need the above permutations in pruning our
candidates for minimal element of ${\cal C}$.

\begin{lemma}\label{lem:zmininal}
Let $M\in {\cal C}$ be minimal. Then $\bkt{z_0}=a_1b_1c_1a_2b_2c_2$.
\end{lemma}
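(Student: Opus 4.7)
The plan is to identify $[z_0]$ by using that $z_0$ (as a facet of $M$) is fixed by $\Phi$, and then exploit the orbit-swapping permutations $\pi_{i,j}$ to force the lexicographically smallest possibility.

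First, I would observe that the automorphism $\Phi$ induces the dual-graph automorphism $\bar\Phi=\prod_{i=1}^{8}(u_i,v_i,w_i)$, whose only fixed vertex is $z_0$. Under the identification of facets of $M$ with vertices of $\Lambda(M)\cong G_{r,9-r}$, this means the facet $z_0$ is the unique $\Phi$-fixed facet of $M$. Since $\Phi$ has order $3$, the facet $z_0$ (which has $6$ elements) must be a union of $\Phi$-orbits; each orbit $\Phi_k=\{a_k,b_k,c_k\}$ has size $3$, so $z_0=\Phi_i\cup\Phi_j$ for some $1\leq i<j\leq 5$. Consequently $[z_0]=a_ib_ic_ia_jb_jc_j$, and it only remains to show that minimality forces $(i,j)=(1,2)$.

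Next I would use the orbit-interchange permutations $\pi_{i,j}$ from the list preceding the lemma. A direct check on generators shows $\Phi\pi_{i,j}=\pi_{i,j}\Phi$, since both $\Phi$ and $\pi_{i,j}$ act ``orbit-wise'' on $V$; hence every composition $\Gamma$ of such transpositions also satisfies $\Phi\Gamma=\Gamma\Phi$, and Lemma \ref{lem:perms} gives $\Gamma(M)\in\mathcal{C}$. Moreover, if $\Phi\Gamma=\Gamma\Phi$ then a facet $u$ of $\Gamma(M)$ is $\Phi$-fixed iff $\Gamma^{-1}(u)$ is $\Phi$-fixed in $M$, so the unique $\Phi$-fixed facet of $\Gamma(M)$ is precisely $\Gamma(z_0)$.

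To finish, suppose $(i,j)\neq(1,2)$ and choose a composition $\Gamma$ of transpositions $\pi_{k,\ell}$ that permutes the orbit indices $\{1,\dots,5\}$ so as to send $\{i,j\}$ onto $\{1,2\}$. Then the $\Phi$-fixed facet of $\Gamma(M)\in\mathcal{C}$ is $\Gamma(z_0)=\Phi_1\cup\Phi_2$, so its string representation begins with $a_1b_1c_1a_2b_2c_2$. Since this is lexicographically strictly smaller than $a_ib_ic_ia_jb_jc_j$, the concatenation ${\rm str}(\Gamma(M))$ is strictly smaller than ${\rm str}(M)$ already in its first six characters, contradicting minimality of $M$. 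Therefore $(i,j)=(1,2)$ and $[z_0]=a_1b_1c_1a_2b_2c_2$.

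The main thing to be careful about is the bookkeeping around the identification: one has to verify that after replacing $M$ by $\Gamma(M)$ the vertex labeled $z_0$ in the dual graph really corresponds to $\Gamma(z_0)$, so that the first six characters of ${\rm str}(\Gamma(M))$ genuinely come from $\Gamma$ applied to the old $z_0$. The commutation $\Phi\Gamma=\Gamma\Phi$ handles this cleanly, and beyond this the argument is a short lexicographic comparison.
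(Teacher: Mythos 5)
Your proof is correct and follows essentially the same route as the paper: identify $z_0$ as the unique $\Phi$-fixed facet (hence a union of two $\Phi$-orbits), then use the orbit-interchanging permutations $\pi_{i,j}$ together with Lemma \ref{lem:perms} to show that anything other than $\Phi_1\cup\Phi_2$ contradicts lexicographic minimality. You spell out the commutation $\Phi\pi_{i,j}=\pi_{i,j}\Phi$ and the fact that $\Gamma(z_0)$ is the $\Phi$-fixed facet of $\Gamma(M)$, details the paper leaves implicit, but the argument is the same.
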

\begin{proof}
Since $\Phi(z_0)=z_0$, $z_0$ must be a union of $\Phi$-orbits. Since
$|z_0|=6$, it contains exactly two orbits. For $M$ to be minimal,
$\bkt{z_0}$ should be minimal for $M$, among all complexes in it's
isomorphism class. Thus $z_0$ should be union of orbits $\Phi_1$ and
$\Phi_2$, for otherwise using one of the permutations $\pi_{i,j}$ we can get
the complex $\pi_{i,j}(M)$ with lexicographically smaller $z_0$. Thus
$z_0=\Phi_1\cup \Phi_2$, or $\bkt{z_0}=a_1b_1c_1a_2b_2c_2$.
\end{proof}

We introduce a succint representation of complexes in ${\cal C}$. Recall
that with an oriented edge $\ovr{uv}$ in $\Lambda(M)$, we associated a
label $l(\ovr{uv})$ as the unique element of $u\backslash v$. To a
complex $M\in {\cal C}$, with $\Lambda(M)\cong G_{r,9-r}$, we associate tuples $X=(x_1,x_2,\ldots, x_9)$
and $Y=(y_1,y_2,\ldots,y_9)$, where (see Figure \ref{fig:complex}) 
\begin{equation}\label{eq:tuples}
x_i = 
\begin{cases}
l(\ovr{z_0u_1}) & \text{ if }  i=1, \\
l(\ovr{u_{i-1}u_i}) & \text{ if } 2\leq i\leq 8, \\
l(\ovr{u_8v_r}) & \text{ if } i=9,
\end{cases}
\quad \text{and}
\quad y_i =
\begin{cases}
l(\ovr{u_1z_0}) & \text{ if } i=1, \\
l(\ovr{u_iu_{i-1}}) & \text{ if } 2\leq i\leq 8, \\
l(\ovr{v_ru_8}) & \text{ if } i=9.
\end{cases}
\end{equation}
Clearly, the triple $(z_0,X,Y)$ uniquely specifies a complex in ${\cal
C}$. Further, by Lemma \ref{lem:zmininal}, for minimal complexes, $z_0$ is
constant, and hence the pair $(X,Y)$ uniquely specifies a minimal complex in
${\cal C}$. We will frequently make use of the following lemma. In the
remainder of the paper, we shall always assume that for a minimal complex
$M\in {\cal C}$, $X=(x_1,\ldots,x_9)$ and $Y=(y_1,\ldots, y_9)$ are the
tuples as defined in (\ref{eq:tuples}).

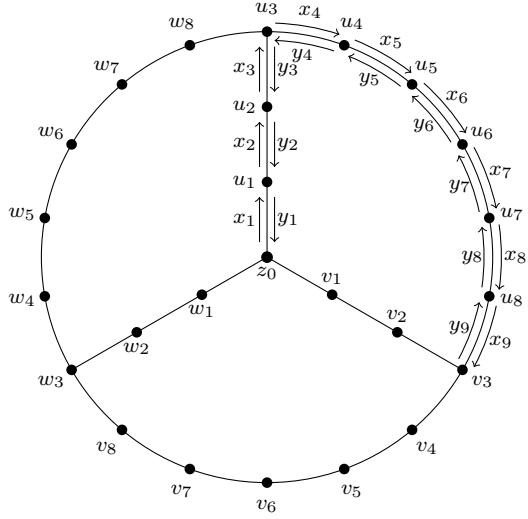
\begin{figure}[htbp]
\centering
\begin{tikzpicture}[scale=1]
\coordinate (center) at (0,0);
\def\radius{3.0cm}
% draw the circle
\draw (center) circle[radius=\radius];
\draw[fill] (center) circle[radius=2pt];
\draw (center) node[below] {\scriptsize $z_0$};
\draw (center) -- ++(90:\radius);
\draw (center) -- ++(-30:\radius);
\draw (center) -- ++(-150:\radius);

% draw the u_1..u_2, v_1..v_2, w_1..w_2 paths
\foreach \i in {1,2} {
\fill (center) ++(90:\i) circle[radius=2pt] 
	node[left] {\scriptsize $u_{\i}$};
\fill (center) ++(-30:\i) circle[radius=2pt]
	node[above] {\scriptsize $v_{\i}$};
\fill (center) ++(-150:\i) circle[radius=2pt]
	node[below] {\scriptsize $w_{\i}$};
}

% draw the outer cycle labels.
\foreach \i in {3,4,5,6,7,8} {
\fill (center) ++(150-20*\i:\radius) circle[radius=2pt];
\draw (center) ++(150-20*\i:\radius+0.8em)	
	node {\scriptsize $u_{\i}$};
\fill (center) ++(30-20*\i:\radius) circle[radius=2pt];
\draw (center) ++(30-20*\i:\radius+0.8em)
	node {\scriptsize $v_{\i}$};
\fill (center) ++(270-20*\i:\radius) circle[radius=2pt];
\draw (center) ++(270-20*\i:\radius+0.8em)
	node {\scriptsize $w_{\i}$};

}

% draw the vertical arrows for x,y sequence
\foreach \i in {1,2,3} {
	\draw[->] (-0.1,\i-0.8) -- (-0.1,\i-0.2);
	\draw (0,\i-0.5) node[left] {\scriptsize $x_{\i}$};
	\draw[->] (0.1,\i-0.2) -- (0.1,\i-0.8);
	\draw (0,\i-0.5) node[right] {\scriptsize $y_{\i}$};
}

% draw arcs along the circle for x,y sequence
\foreach \i in {4,5,6,7,8,9} {
	\draw[->] (center) ++(168-20*\i:\radius+0.3em) arc 
				(168-20*\i:152-20*\i:\radius+0.3em);
	\draw[<-] (center) ++(168-20*\i:\radius-0.3em) arc
				(168-20*\i:152-20*\i:\radius-0.3em);
}

% label arcs along the circle for x,y sequence
\foreach \i in {4,5,6,7,8,9} {
	\draw (center) ++(160-20*\i:\radius+0.8em) node {\scriptsize $x_{\i}$};
	\draw (center) ++(160-20*\i:\radius-0.7em) node {\scriptsize $y_{\i}$};
}

\end{tikzpicture}
\caption{\small Succint representation of a minimal complex in ${\cal C}$}
\label{fig:complex}
\end{figure}

\begin{lemma}\label{lem:leaveseq}
Let $M\in {\cal C}$ be minimal. For $i\in \{3,4,5\}$, define $m_i=\min\{j:
y_j \in \Phi_i\}$. For $i\in \{1,2\}$, define $n_i=\min\{j:x_j\in
\Phi_i\}$. Then we have,
\begin{enumerate}[{\rm (a)}]
\item $m_3 < m_4 < m_5$.
\item $n_2 < n_1$.
\item $y_{m_i} = a_i$ for $i\in \{3,4,5\}$.
\item $x_{n_i} = c_i$  for $i\in \{1,2\}$.
\end{enumerate}
\end{lemma}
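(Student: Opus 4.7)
My plan is to prove each of the four assertions by contradiction: in each case I will exhibit a permutation $\sigma$ of $V$ satisfying the hypothesis of Lemma \ref{lem:perms} such that $\sigma(M)\in\mathcal{C}$ has a lex-smaller string representation than $M$. The principle I apply repeatedly is this: if $\sigma$ fixes $z_0$ and an index $k$ can be found with $[\sigma(u_j)]=[u_j]$ for all $j<k$ and $[\sigma(u_k)]<[u_k]$, then the $u$-branch of $\sigma(M)$ yields a concatenated string lex-smaller than ${\rm str}(M)$; since ${\rm str}(\sigma(M))$ is by definition the minimum over the three branches of $\sigma(M)$, this forces ${\rm str}(\sigma(M))<{\rm str}(M)$, contradicting minimality.

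For parts (a) and (c) I would work through the orbits $3,4,5$ in order. Since $u_0=z_0\subseteq\Phi_1\cup\Phi_2$, the new vertex $y_1=u_1\setminus u_0$ must lie in $\Phi_3\cup\Phi_4\cup\Phi_5$, so $\min(m_3,m_4,m_5)=1$. If this minimum is attained in $\Phi_j$ with $j\in\{4,5\}$, the permutation $\pi_{3,j}$ swaps orbits $3$ and $j$, fixes $z_0$, and lowers the sixth (last) entry of $[u_1]$ from $\Phi_j$ to $\Phi_3$; this forces $m_3=1$. Within $\Phi_3$, if $y_1\ne a_3$ then $\pi_3$ or $\pi_3^2$ replaces $y_1$ by $a_3$ and again lowers the last entry of $[u_1]$. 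The inductive step is the same: having settled the assertions for orbits $3,\ldots,i-1$, for $j<\min(m_i,m_{i+1})$ the facet $u_j$ contains no orbit-$i$ or orbit-$(i+1)$ vertex, so $\pi_{i,i+1}$ fixes each such $u_j$; if $m_{i+1}<m_i$, the facet $u_{m_{i+1}}$ is altered by $\pi_{i,i+1}$ only in its (largest, hence last) entry, which drops from $\Phi_{i+1}$ to $\Phi_i$, giving a contradiction. Once $m_i$ is correctly placed, $\pi_i$ or $\pi_i^2$ forces $y_{m_i}=a_i$ by the same mechanism.

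For parts (b) and (d) the argument is symmetric, working on the removed vertices. Since $x_1\in u_0=z_0\subseteq\Phi_1\cup\Phi_2$, we have $\min(n_1,n_2)=1$. If $n_1=1$, applying $\pi_{1,2}$ replaces $x_1\in\Phi_1$ by the corresponding orbit-$2$ vertex; a check in each of the three cases $x_1\in\{a_1,b_1,c_1\}$ shows that removing an orbit-$2$ element from $z_0$ in place of an orbit-$1$ element retains all of $a_1,b_1,c_1$ in the first five entries of $[u_1]$, yielding a smaller sorted string (since $a_2>c_1$ in the vertex order). Hence $n_2=1<n_1$, giving (b). For $x_{n_2}=c_2$, applying $\pi_2^2$ or $\pi_2$ (when $x_1=a_2$ or $b_2$ respectively) likewise reduces $[u_1]$. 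For $x_{n_1}=c_1$, I use the key observation that for $j<n_1$ neither $x_j$ nor $y_j$ lies in $\Phi_1$: the former because $\Phi_1\subseteq u_{j-1}$, the latter because $y_j\notin u_{j-1}\supseteq\Phi_1$; hence $\pi_1$ and $\pi_1^2$ fix every $u_j$ with $j<n_1$. If $x_{n_1}=b_1$ or $a_1$, applying $\pi_1$ or $\pi_1^2$ respectively transforms the orbit-$1$ part of $u_{n_1}$ into $\{a_1,b_1\}$, lowering the second character of $[u_{n_1}]$.

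The main delicate point throughout is verifying that the chosen $\sigma$ really leaves the prefix $[u_1],\ldots,[u_{k-1}]$ untouched, so that all the action occurs at a single position $k$; this is immediate for orbit-$i$ permutations with $i\in\{3,4,5\}$ once one knows no orbit-$i$ vertex has yet appeared, while for $\pi_1$ it needs the subtler bookkeeping above. A routine preliminary step I would quickly dispense with is checking that each $\pi_i$ and each $\pi_{i,j}$ satisfies $\Phi\sigma=\sigma\Phi^k$ for some $k\in\{1,2\}$, so that Lemma \ref{lem:perms} applies and $\sigma(M)\in\mathcal{C}$.
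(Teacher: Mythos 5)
Your proposal is correct and follows essentially the same route as the paper: for each part you invoke Lemma \ref{lem:perms} with the permutations $\pi_{i,j}$ and $\pi_i$, note that they fix $z_0$ and every facet preceding the first relevant index, and contradict minimality via the first position where the sorted facet string drops, exactly the swap-and-compare argument used in the paper (which treats (a)--(b) in detail and only sketches (c)--(d)). The only differences are cosmetic: you pin down the slightly stronger facts $m_3=1$ and $n_2=1$ and proceed inductively, whereas the paper argues at the general indices $m_4$ and $n_1$, and you spell out the bookkeeping for $\pi_1$ and $\pi_2$ that the paper leaves informal.
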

\begin{proof}
To prove part (a), we prove $m_3<m_4$ and $m_4<m_5$. Assume that 
$m_3 > m_4$. Let $\pi = \pi_{3,4}$. Consider the complex $M'=\pi(M)$. Since
$\pi(z_0),\pi(u_1),\ldots,\pi(u_8)$ is an orbit of $M'$ under $\Phi$, we have
\[ {\rm str}(M') \leq \bkt{\pi(z_0)}+\bkt{\pi(u_1)}+\cdots+\bkt{\pi(u_8)} .\]
Note that for $i < m_4 < m_3$, we have $\pi(u_i)=u_i$ as none of the
$u_i$'s contain elements from $\Phi_3$ or $\Phi_4$. However, for $i=m_4$,
we have $u_i = S\cup \{y\}$ where $S\cap \Phi_3=S\cap \Phi_4=\emptyset$ and
$y\in \Phi_4$. Thus $\pi(u_i)=\pi(S)\cup \{\pi(y)\} = S\cup
\{\pi(y)\}$. However, $\pi(y)\in \Phi_3$ as $y\in \Phi_4$, and hence
$\pi(y) < y$, or $\bkt{\pi(u_i)} < \bkt{u_i}$. Thus 
$\pi(M)<M$, a contradiction to minimality of $M$. This proves $m_3 < m_4$.
Similarly we can show that $m_4 < m_5$. 
\smallskip

To prove part (b), assume that $n_1 < n_2$. Let $\pi =
\pi_{1,2}$. For $i<n_1<n_2$, $\Phi_1\subseteq u_i$ and $\Phi_2\subseteq
u_i$, and hence $\pi(u_i)=u_i$. For $i=n_1$, we have $u_i=S_1\cup
S_2\cup S_3$, where $S_1=\Phi_1\backslash\{x_i\}$, $S_2=\Phi_2$ and $S_3\subseteq
\Phi_3\cup \Phi_4\cup \Phi_5$. Thus, $\pi(u_i)=\pi(S_1)\cup \pi(S_2)\cup
\pi(S_3)$. But $\pi(S_2)=\Phi_1$ as $S_2=\Phi_2$. Thus $[\pi(u_i)]$
contains first three positions from $\Phi_1$, whereas $[u_i]$ contains
first two positions from $\Phi_1$. Hence $\pi(u_i) < u_i$ for $i=n_1$, and
hence $\pi(M)<M$, a contradiction to minimality of $M$. This proves $n_2 <
n_1$.
\smallskip

Parts (c) and (d) may be proved using the permuations
$\pi_i=(a_i,b_i,c_i)$. Informally, in the string representation of $M$,
when an orbit element appears for the first time, we can always permute the
orbit, so that it is the least element $a_i$, for $i^{th}$ orbit.
Similarly, when an orbit element leaves for the first time, we can always
permute the elements so that it is the greatest element $c_i$ for $i^{th}$
orbit. 
\end{proof}

For $i\in \{1,2,\ldots,12\}$, let $N_i$ be the simplicial complexes as defined in Example
\ref{ex:complexes}.

\begin{lemma}\label{lem:dg36}
Let $M\in {\cal C}$ be minimal with $\Lambda(M)\cong G_{3,6}$. Then $M\cong
N_1$.
\end{lemma}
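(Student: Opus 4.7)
The plan is to determine the tuples $(X, Y) = ((x_1, \ldots, x_9), (y_1, \ldots, y_9))$ associated with the path $z_0 \to u_1 \to \cdots \to u_8 \to v_3$ in $\Lambda(M) \cong G_{3,6}$ step-by-step, using minimality of $M$ together with the structural results proved earlier. Lemma \ref{lem:zmininal} already gives $z_0 = a_1b_1c_1a_2b_2c_2$. For the first edge, $x_1 \in z_0 \subseteq \Phi_1 \cup \Phi_2$; if $x_1 \in \Phi_1$, then $n_1 = 1$, contradicting $n_2 < n_1$ from Lemma \ref{lem:leaveseq}(b), so $x_1 \in \Phi_2$ and then Lemma \ref{lem:leaveseq}(d) forces $x_1 = c_2$. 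The analogous argument using $m_3 < m_4 < m_5$ and part (c) yields $y_1 = a_3$, hence $u_1 = a_1b_1c_1a_2b_2a_3$.

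Continuing along the spoke $u_1 \to u_2 \to u_3$ (induced of length $3 < d+1$), Lemma \ref{lem:pathlemma2} confines $\{x_2, x_3\}$ to $z_0 \setminus \{c_2\}$ and $\{y_2, y_3\}$ to $u_3 \setminus z_0$, with the labels on either direction distinct by Lemma \ref{lem:oriented}. At each step the choice of $x_i$ and $y_i$ is narrowed down further by Lemma \ref{lem:leaveseq}, and finally pinned down by minimality of $\bkt{u_i}$ under the permutations $\pi_i$, $\pi_{i,j}$ and $\gamma_{\alpha,\beta}$, all of which qualify for Lemma \ref{lem:perms}.

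The outer arc $u_3 u_4 u_5 u_6 u_7 u_8 v_3$ has length $d+1 = 6$ with all five internal vertices of degree $2$ in $\Lambda(M)$, so Proposition \ref{prop:pathprop}(a),(b) forces $\{x_4, \ldots, x_9\}$ to be an enumeration of $u_3$ and, by $\mathbb{Z}_3$-symmetry, $\{y_4, \ldots, y_9\}$ to be an enumeration of $v_3 = \Phi(u_3)$. I then continue determining each pair $(x_i, y_i)$ sequentially, using (i) distinctness of edge-labels on every tree $T_z$ (Lemma \ref{lem:oriented}), (ii) Lemma \ref{lem:fixedpoints}, which keeps the orbit representatives $u_i$, $\Phi(u_i)$, $\Phi^2(u_i)$ distinct, and (iii) minimality of $\bkt{u_i}$ under the permutations of Lemma \ref{lem:perms}. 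The main obstacle is the closure condition at the end of the arc: $u_8$ must be adjacent to $v_3 = \Phi(u_3)$, i.e., $|u_8 \cap \Phi(u_3)| = 5$, which couples the terminal choices to the initial ones along the spoke, so one must check that the locally forced sequence is globally consistent. The remaining case analysis shows that exactly one sequence survives all constraints, and comparison with Table \ref{tbl:facets} identifies it with the facet list of $N_1$.
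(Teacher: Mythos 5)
Your overall strategy (normalize $z_0$ and the first labels via minimality, then reduce the outer arc to a finite list of labelled candidates) is the same as the paper's, but as written it has genuine gaps at the two places where the paper's argument is structural rather than lexicographic. First, you claim $(y_1,y_2,y_3)$ and $(x_2,x_3)$ are ``narrowed down by Lemma \ref{lem:leaveseq} and finally pinned down by minimality'' under the permutations of Lemma \ref{lem:perms}. That works for $x_1=c_2$ and $y_1=a_3$, but not beyond: nothing in Lemma \ref{lem:leaveseq} or in minimality prevents, say, $y_2\in\Phi_3$ or $x_2\in\Phi_2$, because the alternative configurations are not obtained from the minimal one by an admissible relabelling --- they must be excluded by showing no such $M\in\lKfiven$ exists at all. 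The paper does this with two arguments you omit entirely: the critical-set argument (the orbit $\{u_3,v_3,w_3\}$ is critical, so by Proposition \ref{prop:critical} $u_3\cup v_3\cup w_3=V$, forcing $u_3$ to meet every $\Phi$-orbit and hence $y_1,y_2,y_3$ to lie in distinct orbits $\Phi_3,\Phi_4,\Phi_5$), and a tree-size count (if $x_2\in\Phi_2$ then $x_3\in\Phi_1$ and $T_{x_3}$ is forced to contain at least $11>10$ vertices, using Lemma \ref{lem:treelemma}(b) to forbid $v_3,w_3$ from being leaves). Without these, $(y_2,y_3)=(a_4,a_5)$ and $x_2=c_1$ are unjustified.

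Second, your endgame is asserted rather than proved: ``the remaining case analysis shows that exactly one sequence survives.'' After Proposition \ref{prop:pathprop} gives $\{x_4,\dots,x_9\}=u_3$ and $\{y_4,\dots,y_9\}=v_3$, the paper still has to (i) locate the trees $T_{a_3},T_{a_4},T_{a_5}$ inside $\Lambda(M)-\{z_0,v_3,w_3\}$, which splits into two cases for $T_{a_5}$ and forces $x_7,x_8,x_9$ and $y_4,y_5,y_6$, and (ii) run the remaining $2\times 72$ candidates through {\tt simpcomp} to decide which labelled candidates actually yield a member of $\lKfiven$ --- the local conditions you list (distinct edge labels on each $T_z$, Lemma \ref{lem:fixedpoints}, adjacency of $u_8$ to $\Phi(u_3)$) are necessary but far from sufficient, since one must still verify that every vertex link is a stacked $4$-ball, and also that Case 2 (the other placement of $T_{a_5}$) produces no example. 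Your proposal gives no mechanism, by hand or by machine, for either of these verifications, so the uniqueness claim $M\cong N_1$ is not established.
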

\begin{proof}
Let $X=(x_1,x_2,\ldots,x_9)$ and $Y=(y_1,y_2,\ldots, y_9)$ be the tuple
associated with $M$. We claim the following:
\begin{enumerate}[{\rm (a)}]
\item $(y_1,y_2,y_3)=(a_3,a_4,a_5)$.
\item $(x_1,x_2)=(c_2,c_1)$.
\item $x_3\in \{b_1,b_2\}$.
\end{enumerate}
Recall that a set $S$ of facets of $M$ is
critical in $M$ if each component of $\Lambda(M)-S$ has less than $10$
vertices. Thus the set of facets $u_3,v_3,w_3$ is critical. By Proposition
\ref{prop:critical}, $V=u_3\cup v_3\cup w_3$. Since $v_3=\Phi(u_3)$ and
$w_3=\Phi^2(u_3)$, we conclude that $u_3$ intersects each $\Phi$-orbit.
Since $z_0=\Phi_1\cup \Phi_2$, and $u_3\backslash z_0=\{y_1,y_2,y_3\}$, we conclude that $y_1,y_2$ and $y_3$ each
come from distinct orbits among $\Phi_3,\Phi_4$ and $\Phi_5$. By Parts (a)
and (c) of Lemma
\ref{lem:leaveseq}, we must have $(y_1,y_2,y_3)=(a_3,a_4,a_5)$. This proves
(a). 

By Lemma
\ref{lem:pathlemma2}, $\{x_1,x_2,x_3\}\subseteq z_0\subseteq \Phi_1\cup
\Phi_2$. Thus, by Parts (b) and (d) of Lemma \ref{lem:leaveseq}, we
have $x_1\in \Phi_2$ and further that $x_1=c_2$. We claim that
$x_2=l(\ovr{u_1u_2})\in \Phi_1$. If possible, let $l(\ovr{u_1u_2})\in
\Phi_2$. Then $x_3=l(\ovr{u_2u_3})\in \Phi_1$, otherwise $u_3$ will
not intersect $\Phi_2$. Let us count the vertices in $T_{x_3}$. Clearly
$T_{x_3}$ contains $z_0,u_1,u_2$. Further notice that none of the edges
oriented away from $z_0$ on the paths $z_0\cdots v_3$ and $z_0\cdots w_3$ have the
label $x_3$. Thus, $\{v_1,v_2,v_3,w_1,w_2,w_3\}\subseteq V(T_{x_3})$. 
We have already accounted for $6+3=9$ vertices of $T_{x_3}$. By Part (b) of
Lemma \ref{lem:treelemma} 
$v_3,w_3$ cannot be leaves of $T_{x_3}$, therefore we must have at least two more
vertices in $T_{x_3}$. Thus, $T_{x_3}$ contains at least $11$ vertices, a
contradiction. Therefore, $x_2\in \Phi_1$, and hence by Part (d) of Lemma
\ref{lem:leaveseq}, $x_2=c_1$. This proves (b).

We first show that if $x_3\in \Phi_1$, then $x_3=b_1$ and if $x_3\in
\Phi_2$, then $x_3=b_2$. Assume that $x_3\in \Phi_1$ and $x_3\neq
b_1$. Then $x_3=a_1$. Let $\pi :=
(a_1,c_1)(a_2,c_2)(a_3,b_3)(a_4,b_4)(a_5,b_5)$. We will show that
$\pi(M)<M$. Let $M^{\prime}=\pi(M)$. For clarity, we will denote the
facet of $M$ corresponding to vertex $u$ of the dual graph $G_{3,6}$ as
$M(u)$ and similarly facet of $M^{\prime}$ corresponding to vertex $u$ of $G_{3,6}$
as $M^{\prime}(u)$. We notice that $M^{\prime}(z_0)=M(z_0)=a_1b_1c_1a_2b_2c_2$. Since
$M^{\prime}(u_1)$ is the lexicographically least neighbor of $M^{\prime}(z_0)$, we see that
$M^{\prime}(u_1)=\pi(M(v_1))$ (Lexicographically least neighbor is the one, along
which $c_2$ leaves). It follows that $M^{\prime}(u_i)=\pi(M(v_i))$ for
$1\leq i\leq 3$. From parts
(a) and (b) we have,
\begin{align*}
{\rm str}(M) &=
a_1b_1c_1a_2b_2c_2+a_1b_1c_1a_2b_2a_3+a_1b_1a_2b_2a_3a_4+b_1a_2b_2a_3a_4a_5+\cdots \\
{\rm str}(M^{\prime}) &=
[M^{\prime}(z_0)]+[M^{\prime}(u_1)]+[M^{\prime}(u_2)]+[M^{\prime}(u_3)]+\cdots \\
			  &=
[\pi(M(z_0))]+[\pi(M(v_1))]+[\pi(M(v_2))]+[\pi(M(v_3))]+\cdots \\
			  &=
a_1b_1c_1a_2b_2c_2+a_1b_1c_1a_2b_2a_3+a_1b_1a_2b_2a_3a_4+a_1a_2b_2a_3a_4a_5+\cdots
\\
&< {\rm str}(M).
\end{align*}
This contradicts the minimality of $M$. Hence $x_3\in \Phi_1$ implies
$x_3=b_1$. Similarly it can be shown that $x_3\in \Phi_2$ implies
$x_3=b_2$. This proves (c).

Let us deduce more about the arrangement of the trees $T_x$ for $x\in V$.
Because of the automorphism $\Phi$, we only need to know the trees of
$a_1,a_2,a_3,a_4,a_5$. We note that for $x\in\{a_3,a_4,a_5\}$, $x\not\in
z_0\cup v_3\cup w_3$, and hence $T_x\subseteq \Lambda(M)-\{z_0,v_3,w_3\}$. Thus the trees $T_{a_3},T_{a_4},T_{a_5}$ are contained
in the union of the paths $w_4w_5\cdots u_3\cdots u_8$ and
$u_3\cdots u_1$. Since $y_3=a_5$ leaves along the edge $\ovr{u_3u_2}$, we
conclude that $T_{a_5}$ is contained in the arc $w_4\cdots u_3\cdots u_8$. 
As $T_{a_5}$ has $10$ vertices, we have following cases:
\begin{figure}[htbp]
\centering
\subfloat[Case 1:\label{fig:c361}] {
\scalebox{0.9}{
\begin{tikzpicture}
\coordinate (center) at (0,0);
\def\radius{3.0cm}
% draw the circle
\draw (center) circle[radius=\radius];
\draw[fill] (center) circle[radius=2pt];
\draw (center) node[below] {\small $z_0$};
\draw (center) -- ++(90:\radius);
\draw (center) -- ++(-30:\radius);
\draw (center) -- ++(-150:\radius);

% draw the u_1..u_2, v_1..v_2, w_1..w_2 paths
\foreach \i in {1,2} {
\fill (center) ++(90:\i) circle[radius=2pt] 
	node[left] {\small $u_{\i}$};
\fill (center) ++(-30:\i) circle[radius=2pt]
	node[above] {\small $v_{\i}$};
\fill (center) ++(-150:\i) circle[radius=2pt]
	node[below] {\small $w_{\i}$};
}

% draw the outer cycle labels.
\foreach \i in {3,4,5,6,7,8} {
\fill (center) ++(150-20*\i:\radius) circle[radius=2pt];
\draw (center) ++(150-20*\i:\radius+0.8em)	
	node {\small $u_{\i}$};
\fill (center) ++(30-20*\i:\radius) circle[radius=2pt];
\draw (center) ++(30-20*\i:\radius+0.8em)
	node {\small $v_{\i}$};
\fill (center) ++(270-20*\i:\radius) circle[radius=2pt];
\draw (center) ++(270-20*\i:\radius+0.8em)
	node {\small $w_{\i}$};

}

%% Vertical outgoing labels
\draw[->] (-0.1,0.2) -- (-0.1,0.8);
\draw (0,0.5) node[left] {\scriptsize $c_2$};
\draw[->] (-0.1,1.2) -- (-0.1,1.8);
\draw (0,1.5) node[left] {\scriptsize $c_1$};
\draw[->] (-0.1,2.2) -- (-0.1,2.8);
\draw (0,2.5) node[left] {\scriptsize $x$};

%% Vertical incoming labels
\draw[->] (0.1,0.8) -- (0.1,0.2);
\draw (0,0.5) node[right] {\scriptsize $a_3$};
\draw[->] (0.1,1.8) -- (0.1,1.2);
\draw (0,1.5) node[right] {\scriptsize $a_4$};
\draw[->] (0.1,2.8) -- (0.1,2.2);
\draw (0.1,2.5) node[right] {\scriptsize $a_5$};

%% Outer cycle outoing labels
\draw[->] (center) ++(28:\radius-0.3em) arc
			(28:12:\radius-0.3em);
\draw (center) ++(20:\radius-0.7em) node {\scriptsize $a_4$};
\draw[->] (center) ++(8:\radius-0.3em) arc
			(8:-8:\radius-0.3em);
\draw (center) ++(0:\radius-0.7em) node {\scriptsize $a_3$};
\draw[->] (center) ++(-12:\radius-0.3em) arc
			(-12:-28:\radius-0.3em);
\draw (center) ++(-20:\radius-0.7em) node {\scriptsize $a_5$};

%% Outer cycle incoming labels
\draw[<-] (center) ++(88:\radius+0.3em) arc
			(88:72:\radius+0.3em);
\draw (center) ++(80:\radius+0.8em) node {\scriptsize $b_4$};
\draw[<-] (center) ++(68:\radius+0.3em) arc
			(68:52:\radius+0.3em);
\draw (center) ++(60:\radius+0.8em) node {\scriptsize $b_5$};
\draw[<-] (center) ++(48:\radius+0.3em) arc
			(48:32:\radius+0.3em);
\draw (center) ++(40:\radius+0.8em) node {\scriptsize $b_3$};

%% draw the other side labels
\draw[->] (center) ++(152:\radius+0.3em) arc
			(152:168:\radius+0.3em);
\draw (center) ++(160:\radius+0.8em) node {\scriptsize $a_3$};
\draw[->] (center) ++(172:\radius+0.3em) arc
			(172:188:\radius+0.3em);
\draw (center) ++(180:\radius+0.8em) node {\scriptsize $a_5$};
\draw[->] (center) ++(192:\radius+0.3em) arc
			(192:208:\radius+0.3em);
\draw (center) ++(200:\radius+0.8em) node {\scriptsize $a_4$};

\end{tikzpicture}
} %scalebox
}
\subfloat[Case 2:\label{fig:c362}] {
\scalebox{0.9}{
\begin{tikzpicture}
\coordinate (center) at (0,0);
\def\radius{3.0cm}
% draw the circle
\draw (center) circle[radius=\radius];
\draw[fill] (center) circle[radius=2pt];
\draw (center) node[below] {\small $z_0$};
\draw (center) -- ++(90:\radius);
\draw (center) -- ++(-30:\radius);
\draw (center) -- ++(-150:\radius);

% draw the u_1..u_2, v_1..v_2, w_1..w_2 paths
\foreach \i in {1,2} {
\fill (center) ++(90:\i) circle[radius=2pt] 
	node[left] {\small $u_{\i}$};
\fill (center) ++(-30:\i) circle[radius=2pt]
	node[above] {\small $v_{\i}$};
\fill (center) ++(-150:\i) circle[radius=2pt]
	node[below] {\small $w_{\i}$};
}

% draw the outer cycle labels.
\foreach \i in {3,4,5,6,7,8} {
\fill (center) ++(150-20*\i:\radius) circle[radius=2pt];
\draw (center) ++(150-20*\i:\radius+0.8em)	
	node {\small $u_{\i}$};
\fill (center) ++(30-20*\i:\radius) circle[radius=2pt];
\draw (center) ++(30-20*\i:\radius+0.8em)
	node {\small $v_{\i}$};
\fill (center) ++(270-20*\i:\radius) circle[radius=2pt];
\draw (center) ++(270-20*\i:\radius+0.8em)
	node {\small $w_{\i}$};

}

%% Vertical outgoing labels
\draw[->] (-0.1,0.2) -- (-0.1,0.8);
\draw (0,0.5) node[left] {\scriptsize $c_2$};
\draw[->] (-0.1,1.2) -- (-0.1,1.8);
\draw (0,1.5) node[left] {\scriptsize $c_1$};
\draw[->] (-0.1,2.2) -- (-0.1,2.8);
\draw (0,2.5) node[left] {\scriptsize $x$};

%% Vertical incoming labels
\draw[->] (0.1,0.8) -- (0.1,0.2);
\draw (0,0.5) node[right] {\scriptsize $a_3$};
\draw[->] (0.1,1.8) -- (0.1,1.2);
\draw (0,1.5) node[right] {\scriptsize $a_4$};
\draw[->] (0.1,2.8) -- (0.1,2.2);
\draw (0.1,2.5) node[right] {\scriptsize $a_5$};

%% Outer cycle outoing labels
\draw[->] (center) ++(28:\radius-0.3em) arc
			(28:12:\radius-0.3em);
\draw (center) ++(20:\radius-0.7em) node {\scriptsize $a_3$};
\draw[->] (center) ++(8:\radius-0.3em) arc
			(8:-8:\radius-0.3em);
\draw (center) ++(0:\radius-0.7em) node {\scriptsize $a_5$};
\draw[->] (center) ++(-12:\radius-0.3em) arc
			(-12:-28:\radius-0.3em);
\draw (center) ++(-20:\radius-0.7em) node {\scriptsize $a_4$};

%% Outer cycle incoming labels
\draw[<-] (center) ++(88:\radius+0.3em) arc
			(88:72:\radius+0.3em);
\draw (center) ++(80:\radius+0.8em) node {\scriptsize $b_5$};
\draw[<-] (center) ++(68:\radius+0.3em) arc
			(68:52:\radius+0.3em);
\draw (center) ++(60:\radius+0.8em) node {\scriptsize $b_3$};
\draw[<-] (center) ++(48:\radius+0.3em) arc
			(48:32:\radius+0.3em);
\draw (center) ++(40:\radius+0.8em) node {\scriptsize $b_4$};

%% draw the other side labels
\draw[->] (center) ++(152:\radius+0.3em) arc
			(152:168:\radius+0.3em);
\draw (center) ++(160:\radius+0.8em) node {\scriptsize $a_4$};
\draw[->] (center) ++(172:\radius+0.3em) arc
			(172:188:\radius+0.3em);
\draw (center) ++(180:\radius+0.8em) node {\scriptsize $a_3$};
\draw[->] (center) ++(192:\radius+0.3em) arc
			(192:208:\radius+0.3em);
\draw (center) ++(200:\radius+0.8em) node {\scriptsize $a_5$};

\end{tikzpicture}
} %scalebox
}
\caption{\small Cases for Lemma \ref{lem:dg36}}
\label{fig:cases36}
\end{figure}
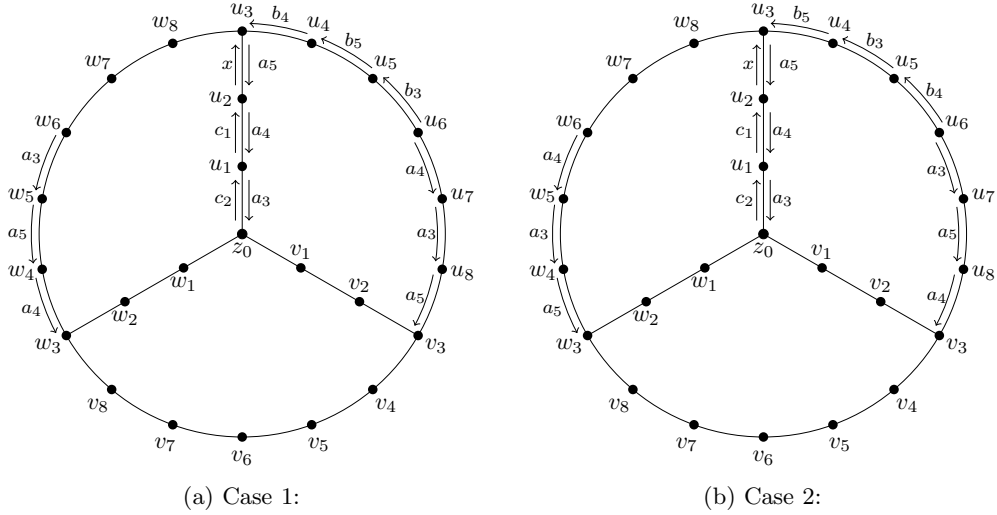

\noindent{\bf Case 1:} $T_{a_5}=w_5\cdots u_3\cdots u_8$. Immediately we
have $x_9=l(\ovr{u_8v_3})=a_5$ and $l(\ovr{w_5w_4})=a_5$, and hence
$y_5=l(\ovr{u_5u_4})=\Phi(l(\ovr{w_5w_4}))=b_5$ (see Figure
\ref{fig:cases36}\subref{fig:c361}). Now $T_{a_4}$
contains the vertex $u_2$ from the path $u_3\cdots u_1$, hence it must
induce an arc of $9$ vertices on outer cycle. By Lemma \ref{lem:treelemma},
 it cannot share its end points with $T_{a_5}$. We see that the only
possibility for $T_{a_4}$ is $u_2u_3\cup w_4\cdots u_3\cdots u_6$.
Similarly we have $T_{a_3}$ as $u_1\cdots u_3\cup w_6\cdots u_3\cdots u_7$.
From these, we conclude as before that $x_8=a_3$, $x_7=a_4$, $y_4=b_4$ and
$y_6=b_3$. From Proposition \ref{prop:pathprop}, we must have
$u_3=\{x_4,\ldots, x_9\}$ and $v_3=\{y_4,\ldots, y_9\}$. Hence we have
$\{x_4,x_5,x_6\}=u_3\backslash \{a_3,a_4,a_5\}$ and
$\{y_7,y_8,y_9\}=v_3\backslash \{b_3,b_4,b_5\}$. Putting $x_3=x\in
\{b_1,b_2\}$, we have the following constraints,
\begin{subequations}
\begin{align}
(x_1,x_2,x_3,x_7,x_8,x_9)=(c_2,c_1,x,a_4,a_3,a_5), &\quad
\{x_4,x_5,x_6\}=u_3\backslash \{a_3,a_4,a_5\}, \\
(y_1,y_2,y_3,y_4,y_5,y_6)=(a_3,a_4,a_5,b_4,b_5,b_3), &\quad
\{y_7,y_8,y_9\}=v_3\backslash \{b_3,b_4,b_5\}.
\end{align}
\end{subequations}
Above equation gives $2$ choices for $x$, $3!=6$ each for $(x_4,x_5,x_6)$
and $(y_7,y_8,y_9)$. We examine the $2\times 6\times 6=72$ cases on computer
using {\tt simpcomp} \cite{simpcomp}. We get the following solution.
\[ X_1=(c_2,c_1,b_1,b_2,a_2,a_1,a_4,a_3,a_5),\quad
Y_1=(a_3,a_4,a_5,b_4,b_5,b_3,c_2,b_1,b_2). \]
The pair $(X_1,Y_1)$ yeilds the complex $N_1$.
\smallskip

\noindent{\bf Case 2:} $T_{a_5}=w_4w_5\cdots u_3\cdots u_7$. In this case,
we have $T_{a_4}=u_2u_3\cup w_6\cdots u_3\cdots u_8$ and
$T_{a_3}=u_1\cdots u_3\cup w_5\cdots u_3\cdots u_6$ (see Figure
\ref{fig:cases36}\subref{fig:c362}). Analyzing as in Case
1, we have the following constraints,
\begin{subequations}
\begin{align}
(x_1,x_2,x_3,x_7,x_8,x_9)=(c_2,c_1,x,a_3,a_5,a_4), &\quad 
\{x_4,x_5,x_6\}=u_3\backslash \{a_3,a_4,a_5\}, \\
(y_1,y_2,y_3,y_4,y_5,y_6)=(a_3,a_4,a_5,b_5,b_3,b_4), &\quad
\{y_7,y_8,y_9\}=v_3\backslash \{b_3,b_4,b_5\}.
\end{align}
\end{subequations}
where $x_3=x\in \{b_1,b_2\}$. Examining the possible $72$ cases using {\tt
simpcomp} \cite{simpcomp}, we find no member of $\lKfiven$. Thus $N_1$ is the only minimal
element of ${\cal C}$ with $\Lambda(M)\cong G_{3,6}$. This proves the
lemma.
\end{proof}

\begin{lemma}\label{lem:dg45}
Let $M\in {\cal C}$ be minimal with $\Lambda(M)\cong G_{4,5}$. Then $M\cong
N_2,N_3$ or $N_4$. 
\end{lemma}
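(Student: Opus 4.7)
The proof follows the same template as Lemma \ref{lem:dg36}. Let $X=(x_1,\ldots,x_9)$ and $Y=(y_1,\ldots,y_9)$ be the tuples associated with $M$ by \eqref{eq:tuples}, now with $r=4,s=5$. First I identify a minimal critical set. In contrast with the $G_{3,6}$ case, $\{u_4,v_4,w_4\}$ is not itself critical, because the component of $\Lambda(M)-\{u_4,v_4,w_4\}$ containing $z_0$ has exactly $10=f_0(M)-d$ vertices. The smallest $\Phi$-invariant critical set extending it is $S=\{z_0,u_4,v_4,w_4\}$, whose removal leaves six components of size $3$ or $4$. By Proposition \ref{prop:critical} and $\Phi$-invariance, $u_4$ meets every orbit $\Phi_i$ for $i\in\{3,4,5\}$. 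Combined with Lemma \ref{lem:pathlemma2}, this forces $\{y_1,y_2,y_3,y_4\}\subseteq u_4\setminus z_0\subseteq \Phi_3\cup\Phi_4\cup\Phi_5$ with exactly one orbit doubled, and $\{x_1,x_2,x_3,x_4\}\subseteq z_0$ hitting both $\Phi_1$ and $\Phi_2$ (giving distribution patterns $(3,1),(2,2),(1,3)$).

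Next I invoke Lemma \ref{lem:leaveseq} to fix the first occurrences $y_{m_i}=a_i$ for $i\in\{3,4,5\}$ with $m_3<m_4<m_5$ and $x_{n_i}=c_i$ for $i\in\{1,2\}$ with $n_2<n_1$. Combined with the distribution constraints, this narrows the prefixes $(y_1,y_2,y_3,y_4)$ and $(x_1,x_2,x_3,x_4)$ to a short explicit list, indexed by the choice of doubled orbit and its extra representative. I prune further using the permutations $\pi_i,\pi_{i,j},\gamma_{\alpha,\beta}$: for each candidate prefix I check whether some $\Gamma$ satisfying the hypothesis of Lemma \ref{lem:perms} produces a lexicographically smaller complex, and discard accordingly. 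In parallel, I analyze the trees $T_{a_3},T_{a_4},T_{a_5}$, each of which has $10$ vertices and, since $a_i\notin z_0$ for $i\geq 3$, lies in $\Lambda(M)-\{z_0\}$. Because the only vertices of degree $\geq 3$ in $G_{4,5}-\{z_0\}$ are $u_4,v_4,w_4$, Lemma \ref{lem:treelemma}(b) bars these three from being leaves of any $T_{a_i}$. This constrains each $T_{a_i}$ to a union of short radial stubs (inside $u_1u_2u_3$ and its $\Phi$-images) together with an arc on the outer cycle whose endpoints avoid $\{u_4,v_4,w_4\}$. Applying Lemma \ref{lem:oriented} with $u_4$ as root then pins down additional entries of $X$ and $Y$ and induces a natural case split according to which outer arc each $T_{a_i}$ occupies.

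Finally, for each surviving configuration the remaining free coordinates of $X,Y$ are enumerated and tested for membership in $\lKfiven$ with {\tt simpcomp}, exactly as in Lemma \ref{lem:dg36}. This yields precisely the three minimal complexes $N_2,N_3,N_4$. The principal obstacle is the branching of the case analysis: compared with $G_{3,6}$, the extra slot on each radial path and each outer arc multiplies the number of admissible prefix patterns and tree placements, and three genuine solutions survive the minimality pruning rather than one. Consequently the combinatorial bookkeeping is substantially heavier, and the delicate point is to organize the case split so that the permutation-based minimality checks eliminate enough configurations before any computer enumeration is invoked.
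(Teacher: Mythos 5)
Your overall template (covering argument, Lemma \ref{lem:leaveseq}, Lemma \ref{lem:pathlemma2}, tree placement, then a {\tt simpcomp} search) is the paper's, and your observation that $\{u_4,v_4,w_4\}$ is not critical but $\{z_0,u_4,v_4,w_4\}$ is gives the same covering statement $z_0\cup u_4\cup v_4\cup w_4=V$ that the paper gets from Proposition \ref{prop:degthree}. But there is a genuine gap between that statement and the case analysis you then run. From it you only extract that $u_4$ meets $\Phi_3,\Phi_4,\Phi_5$, whereas the paper proves the stronger fact $u_4\cup v_4\cup w_4=V$ (by showing at most one vertex $x\in z_0$ could satisfy $T_x\subseteq\Lambda(M)-\{u_4,v_4,w_4\}$, since any such $T_x$ would have to be the full $10$-vertex set $\{z_0\}\cup\bigcup_{i\le 3}\{u_i,v_i,w_i\}$, and two such would violate Lemma \ref{lem:treelemma}(a)). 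That stronger fact, plus a tree-size count ruling out $x_2\in\Phi_2$, is what yields $(x_1,x_2)=(c_2,c_1)$ and kills your $(3,1)$ and $(1,3)$ distribution patterns; and two further arguments, using distance-$5$ paths (e.g.\ $u_3\cdots z_0\cdots v_2$, $w_4\cdots z_0 u_1$) together with Lemma \ref{lem:oriented}, are what force $y_1,y_2,y_3$ to lie in three distinct orbits, hence $(y_1,y_2,y_3)=(a_3,a_4,a_5)$, and force $y_4\notin\Phi_3$. None of these deductions appears in your sketch, and they cannot be replaced by your proposed pruning via Lemma \ref{lem:perms}: configurations such as $y_1,y_2$ in the same orbit, or $y_4\in\Phi_3$, or $\Phi_2\subseteq\{x_1,\dots,x_4\}$ are not merely lexicographically non-minimal, they are impossible for every member of $\mathcal{C}$, and showing that requires exactly the structural arguments you omit.

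This gap propagates into your tree analysis. You only confine $T_{a_3},T_{a_4},T_{a_5}$ to $\Lambda(M)-\{z_0\}$, which still allows them to use the $v$- and $w$-radial paths and arcs, so the ``arc on the outer cycle'' case split is not justified and does not reduce the search to the handful of configurations the computer check needs. In the paper the confinement $T_{a_3},T_{a_5}\subseteq\Lambda(M)-\{z_0,v_4,w_4\}$ (Case $y_4\in\{b_4,c_4\}$), resp.\ $T_{a_3},T_{a_4}$ in the other case, is available only \emph{after} the claims above show that $u_4$ meets the relevant orbits in singletons (e.g.\ $u_4\cap\Phi_3=\{a_3\}$), so that $a_3,a_5\notin v_4\cup w_4$; from there the arcs, hence $x_8,x_9,y_5,y_6$ (or their analogues), are pinned down and only the residual coordinates are enumerated, yielding $N_2,N_3,N_4$. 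As written, your argument asserts the narrowing rather than proving it, so the reduction to a feasible enumeration is not established.
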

\begin{proof}
Let $X=(x_1,x_2,\ldots,x_9)$ and $Y=(y_1,y_2,\ldots,y_9)$ be the tuple
associated with $M$. We claim the following:
\begin{enumerate}[{\rm (a)}]
\item $(x_1,x_2)=(c_2,c_1)$.
\item $(y_1,y_2,y_3)=(a_3,a_4,a_5)$.
\item $(x_3,x_4)\in \{(b_1,a_2),(b_1,b_2),(b_2,a_1),(b_2,b_1)\}$.
\item $y_4\in \{b_4,c_4,b_5,c_5\}$.
\end{enumerate}
We start by proving that $u_4\cup v_4\cup w_4=V$, which would imply that
$u_4$ intersects all $\Phi$-orbits. By Proposition
\ref{prop:degthree}, $z_0\cup u_4\cup v_4\cup w_4=V$. We show that
$z_0\subseteq u_4\cup v_4\cup w_4$. Suppose not, and let $x\in z_0$ be such
that $x\not\in u_4\cup v_4\cup w_4$. Thus $T_x\subseteq
\Lambda(M)-\{u_4,v_4,w_4\}$. Since $|V(T_x)|=10$, we see that
$V(T_x)=\{z_0\}\cup (\bigcup_{i=1}^3\{u_i,v_i,w_i\})$. But then there is at
most one such $x\in V$. Thus $|u_4\cup v_4\cup w_4|\geq 14$. Notcing that
$u_4\cup v_4\cup w_4$ is union of $\Phi$-orbits, we conclude $|u_4\cup
v_4\cup w_4|=15$, and hence $V=u_4\cup v_4\cup w_4$. Thus $u_4$ intersects
all $\Phi$-orbits. 

We now prove (a). The claim $x_1=c_2$ is proved exaclty as in Lemma
\ref{lem:dg36}. We prove $x_2\in \Phi_1$, and then by Lemma
\ref{lem:leaveseq}, Part (d) claim that $x_2=c_1$. Assume that $x_2\not\in \Phi_1$. Then $x_2\in \Phi_2$.
By Lemma \ref{lem:pathlemma2}, $\{x_1,x_2,x_3,x_4\}\cap u_4=\emptyset$.
Since $u_4$ intersects $\Phi_2$, we must have $\Phi_2\not\subseteq
\{x_1,x_2,x_3,x_4\}$. Since $\{x_1,x_2\}\subseteq \Phi_2$, we conclude
$\{x_3,x_4\}\subseteq \Phi_1$. We will show that in this case
$|V(T_{x_3})|>10$. For the purpose of estimating $|V(T_{x_3})|$, assume
$(x_3,x_4)=(a_1,b_1)$. Then looking outwards from $z_0$, $a_1$ leaves along
$\ovr{u_2u_3}$ and $\ovr{w_3w_4}$, and does not leave along the path
$z_0\cdots v_4$. Thus $\{z_0,u_1,u_2,v_1,v_2,v_3,v_4,w_1,w_2,w_3\}$ are
vertices in $T_{a_1}$. Since $v_4$ cannot be a leaf, there must be at
least one more vertex in $T_{a_1}$, and thus number of vertices exceeds
$10$, a contradiction. Note that we would arrive at the same estimate for
any other choice of $(x_3,x_4)$. Thus $x_2\in \Phi_1$ and hence $x_2=c_1$.
This proves (a). 

To prove (b), we claim that $y_3,y_4,y_5$ belong to distinct $\Phi$-orbits
among $\Phi_3,\Phi_4,\Phi_5$. Since distance between the pairs $(u_3,v_2)$
and $(u_3,w_2)$ in $\Lambda(M)$ is $5$, there exist $x,y\in V$ such that
$\{u_3,u_2,u_1,z_0,v_1,v_2\}\subseteq V(T_x)$ and
$\{u_3,u_2,u_1,z_0,w_1,w_2\}\subseteq V(T_y)$. We orient the two trees with
$z_0$ as root (edges oriented towards the root). Now if $y_i=y_j$ for some
$1\leq i<j\leq 3$, we see that $y_j$ is repeated as a label leaving $v_i$
or $w_i$ towards $z_0$. Thus Lemma \ref{lem:oriented} is violated in $T_x$
or $T_y$. Hence $y_3,y_4,y_5$ are from distinct orbits, and by Lemma
\ref{lem:leaveseq}, we have $(y_3,y_4,y_5)=(a_3,a_4,a_5)$. This proves (b).

To prove (c), we prove that $x_3=b_1$ if $x_3\in \Phi_1$ and $x_3=b_2$ if
$x_3\in \Phi_2$. The proof is exaclty the same as of Claim (c) in Lemma \ref{lem:dg36}. Together
with part (a), and the observation that $\Phi_i\not\subseteq
\{x_1,x_2,x_3,x_4\}$ for $i=1,2$, we have $(x_3,x_4)\in
\{(b_1,a_2),(b_1,b_2),(b_2,a_1),(b_2,b_1)\}$. This proves (c).

To prove (d), it is sufficient to show that $y_4\not\in \Phi_3$. Suppose
$y_4\in \Phi_3$, say $y_4=b_3$. Then observe that $l(\ovr{w_4w_3})=a_3$.
Since $w_4$ and $u_1$ are at a distance $5$ in $\Lambda(M)$, there exists
$z\in V$ such that $\{w_4,w_3,w_2,w_1,z_0,u_1\}\subseteq V(T_z)$. Orient $T_z$ with $z_0$ as the
root (edges oriented towards the root). Then
$l(\ovr{u_1z_0})=l(\ovr{w_4w_3})=a_3$, which contradicts Lemma \ref{lem:oriented}. A
similar contradiction also follows if we assume $y_4=c_3$. Thus $y_4\not\in
\Phi_3$, which proves (d).

Under conditions (a), (b), (c) and (d), we attempt to deduce the arrangement
of trees $T_x$. From (d), we have the following cases:
\medskip

\noindent{\bf Case 1:} $y_4\in \{b_4,c_4\}$. Observe that in this case
$u_4\cap \Phi_3=\{a_3\}$ and $u_4\cap \Phi_5=\{a_5\}$. Consequently,
$a_3\not\in v_4\cup w_4$ and $a_5\not\in v_4\cup w_4$. Thus
$T_{a_3},T_{a_5}\subseteq \Lambda(M)-\{z_0,v_4,w_4\}$. Therefore the trees
$T_{a_3}$ and $T_{a_5}$ are confined to the union of the paths $u_1\cdots
u_4$ and the arc $w_5w_6\cdots u_4\cdots u_8$. Since $a_3$ leaves along the
edge $\ovr{u_1z_0}$, it contains vertices $u_1,u_2,u_3$. Hence,
$T_{a_3}$ induces an arc with $7$ vertices on the outer cycle. Similarly
$T_{a_5}$ induces an arc with $9$ vertices on the outer cycle. It can be
seen that the only possibilities for $T_{a_3}$ and $T_{a_5}$ are (see Figure
\ref{fig:cases451}),
\begin{eqnarray}\label{eq:ta3a5}
T_{a_3} &=& u_1u_2u_3u_4\cup w_6w_7w_8u_4u_5u_6u_7, \nonumber \\
T_{a_5} &=& u_3u_4\cup w_5w_6w_7w_8u_4u_5u_6u_7u_8.
\end{eqnarray}

%%%% FIGURE FOR CASE 1: LEMMA (4,5)
\begin{figure}[h!]
\centering
\scalebox{0.9}{
\begin{tikzpicture}[scale=0.8]
\coordinate (center) at (0,0);
\def\radius{4.0cm}
% draw the circle
\draw (center) circle[radius=\radius];
\draw[fill] (center) circle[radius=2pt];
\draw (center) node[below] {\small $z_0$};
\draw (center) -- ++(90:\radius);
\draw (center) -- ++(-30:\radius);
\draw (center) -- ++(-150:\radius);

% draw the u_1..u_2, v_1..v_2, w_1..w_2 paths
\foreach \i in {1,2,3} {
\fill (center) ++(90:\i) circle[radius=2pt] 
	node[left] {\small $u_{\i}$};
\fill (center) ++(-30:\i) circle[radius=2pt]
	node[above] {\small $v_{\i}$};
\fill (center) ++(-150:\i) circle[radius=2pt]
	node[below] {\small $w_{\i}$};
}

% draw the outer cycle labels.
\foreach \i in {4,5,6,7,8} {
\fill (center) ++(186-24*\i:\radius) circle[radius=2pt];
\draw (center) ++(186-24*\i:\radius+0.8em)	
	node {\small $u_{\i}$};
\fill (center) ++(66-24*\i:\radius) circle[radius=2pt];
\draw (center) ++(66-24*\i:\radius+0.8em)
	node {\small $v_{\i}$};
\fill (center) ++(306-24*\i:\radius) circle[radius=2pt];
\draw (center) ++(306-24*\i:\radius+0.8em)
	node {\small $w_{\i}$};

}

%% Vertical outgoing labels
\draw[->] (-0.1,0.2) -- (-0.1,0.8);
\draw (0,0.5) node[left] {\scriptsize $c_2$};
\draw[->] (-0.1,1.2) -- (-0.1,1.8);
\draw (0,1.5) node[left] {\scriptsize $c_1$};
\draw[->] (-0.1,2.2) -- (-0.1,2.8);
\draw (0,2.5) node[left] {\scriptsize $x$};
\draw[->] (-0.1,3.2) -- (-0.1,3.8);
\draw (0,3.5) node[left] {\scriptsize $y$};

%% Vertical incoming labels
\draw[->] (0.1,0.8) -- (0.1,0.2);
\draw (0,0.5) node[right] {\scriptsize $a_3$};
\draw[->] (0.1,1.8) -- (0.1,1.2);
\draw (0,1.5) node[right] {\scriptsize $a_4$};
\draw[->] (0.1,2.8) -- (0.1,2.2);
\draw (0.1,2.5) node[right] {\scriptsize $a_5$};
\draw[->] (0.1,3.8) -- (0.1,3.2);
\draw (0.1,3.5) node[right] {\scriptsize $z$};

%% Outer cycle outoing labels
\draw[->] (center) ++(16:\radius-0.3em) arc
			(16:-4:\radius-0.3em);
\draw (center) ++(6:\radius-0.8em) node {\scriptsize $a_3$};
\draw[->] (center) ++(-8:\radius-0.3em) arc
			(-8:-28:\radius-0.3em);
\draw (center) ++(-18:\radius-0.8em) node {\scriptsize $a_5$};

%% Outer cycle incoming labels
\draw[<-] (center) ++(88:\radius+0.3em) arc
			(88:68:\radius+0.3em);
\draw (center) ++(78:\radius+0.8em) node {\scriptsize $b_5$};
\draw[<-] (center) ++(64:\radius+0.3em) arc
			(64:44:\radius+0.3em);
\draw (center) ++(54:\radius+0.8em) node {\scriptsize $b_3$};

%% draw the other side labels
\draw[<-] (center) ++(208:\radius+0.3em) arc
			(208:188:\radius+0.3em);
\draw (center) ++(198:\radius+0.8em) node {\scriptsize $a_5$};
\draw[<-] (center) ++(184:\radius+0.3em) arc
			(184:164:\radius+0.3em);
\draw (center) ++(174:\radius+0.8em) node {\scriptsize $a_3$};

\end{tikzpicture}
}%scalebox
\caption{\small Illustration for Lemma \ref{lem:dg45}, Case 1}
\label{fig:cases451}
\end{figure}
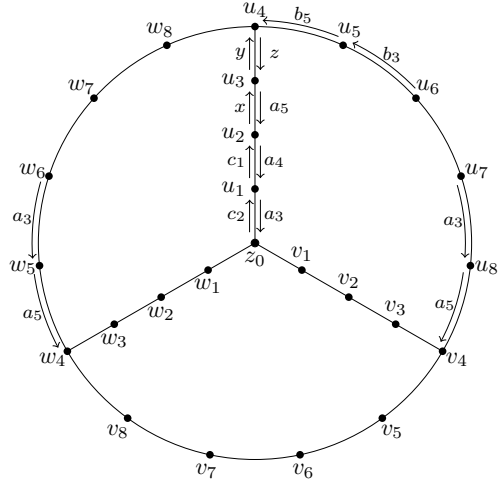

From (\ref{eq:ta3a5}), we conclude $x_8=a_3$, $x_9=a_5$, $y_5=b_5$,
$y_6=b_3$. By Lemma, \ref{lem:pathlemma2}, we have
$\{x_5,\ldots,x_9\}=u_4\backslash v_4$ and
$\{y_5,\ldots,y_9\}=v_4\backslash u_4$. Thus
$\{x_5,x_6,x_7\}=(u_4\backslash v_4)\backslash \{a_3,a_5\}$,
$\{y_7,y_8,y_9\}=(v_4\backslash u_4)\backslash \{b_3,b_5\}$. Putting
$(x_3,x_4)=(x,y)$ and $y_4=z$, we have the following constraints,
\begin{subequations}\label{eq:t451}
\begin{align}
(x_1,x_2,x_3,x_4,x_8,x_9)=(c_2,c_1,x,y,a_3,a_5), &\quad 
\{x_5,x_6,x_7\}=(u_4\backslash v_4)\backslash \{a_3,a_5\}, \\
(y_1,y_2,y_3,y_4,y_5,y_6)=(a_3,a_4,a_5,z,b_3,b_5), &\quad
\{y_7,y_8,y_9\}=(v_4\backslash u_4)\backslash \{b_3,b_5\}.
\end{align}
\end{subequations}
Examining the cases using {\tt simpcomp} \cite{simpcomp}, we find the following solutions:
\begin{align*}
X_1=(c_2,c_1,b_1,b_2,a_4,a_1,a_2,a_3,a_5), &\quad
Y_1=(a_3,a_4,a_5,b_4,b_5,b_3,c_4,b_1,b_2), \\
X_2=(c_2,c_1,b_2,b_1,a_4,a_1,a_2,a_3,a_5), &\quad
Y_2=(a_3,a_4,a_5,b_4,b_5,b_3,c_4,b_2,b_1).
\end{align*}
The tuples $(X_1,Y_1)$ and $(X_2,Y_2)$ yeild the complexes $N_2$ and $N_3$
respectively.
%\medskip
\medskip

\noindent{\bf Case 2:} $y_4\in \{b_5,c_5\}$. In this case we see that the
trees $T_{a_3}$ and $T_{a_4}$ are contained in the union of arc ${\cal A}=
w_5w_6\cdots u_4\cdots u_8$ and the path ${\cal P}=u_1u_2\cdots u_4$. As
$T_{a_4}$ contains $3$ vertices on ${\cal P}$, it must induce a path
containing $8$ vertices (including $u_4$) on ${\cal A}$. Similarly
$T_{a_3}$, which contains $4$ vertices on ${\cal P}$ must induce a path
containing $7$ vertices on ${\cal A}$. It can be seen that we have the
following solutions for $T_{a_3}$ and $T_{a_4}$.
\begin{align}\label{eq:sol45}
T_{a_3} &=u_1u_2u_3u_4\cup w_5w_6w_7w_8u_4u_5u_6, &\quad
T_{a_4} &=u_2u_3u_4\cup w_6w_7w_8u_4u_5u_6u_7u_8, \nonumber \\
T_{a_3} &=u_1u_2u_3u_4\cup w_7w_8u_4u_5u_6u_7u_8, &\quad 
T_{a_4} &=u_2u_3u_4\cup w_5w_6w_7w_8u_4u_5u_6u_7.
\end{align}

%%%% FIGURE CASE 2, DG 54
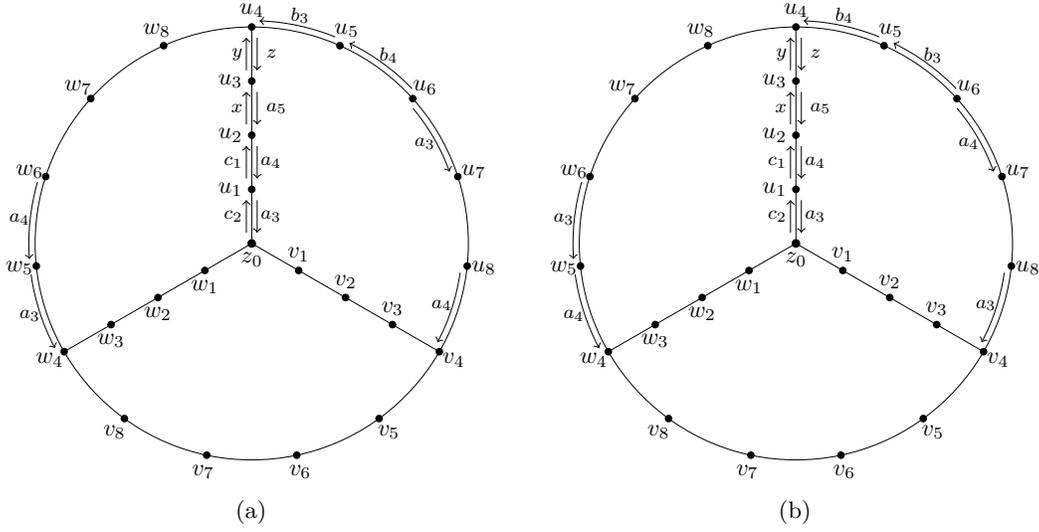
\begin{figure}[htbp]
\centering
\subfloat[\label{fig:case4521}] {
\scalebox{0.9} {
\begin{tikzpicture}[scale=0.8]
\coordinate (center) at (0,0);
\def\radius{4.0cm}
% draw the circle
\draw (center) circle[radius=\radius];
\draw[fill] (center) circle[radius=2pt];
\draw (center) node[below] {\small $z_0$};
\draw (center) -- ++(90:\radius);
\draw (center) -- ++(-30:\radius);
\draw (center) -- ++(-150:\radius);

% draw the u_1..u_2, v_1..v_2, w_1..w_2 paths
\foreach \i in {1,2,3} {
\fill (center) ++(90:\i) circle[radius=2pt] 
	node[left] {\small $u_{\i}$};
\fill (center) ++(-30:\i) circle[radius=2pt]
	node[above] {\small $v_{\i}$};
\fill (center) ++(-150:\i) circle[radius=2pt]
	node[below] {\small $w_{\i}$};
}

% draw the outer cycle labels.
\foreach \i in {4,5,6,7,8} {
\fill (center) ++(186-24*\i:\radius) circle[radius=2pt];
\draw (center) ++(186-24*\i:\radius+0.8em)	
	node {\small $u_{\i}$};
\fill (center) ++(66-24*\i:\radius) circle[radius=2pt];
\draw (center) ++(66-24*\i:\radius+0.8em)
	node {\small $v_{\i}$};
\fill (center) ++(306-24*\i:\radius) circle[radius=2pt];
\draw (center) ++(306-24*\i:\radius+0.8em)
	node {\small $w_{\i}$};

}

%% Vertical outgoing labels
\draw[->] (-0.1,0.2) -- (-0.1,0.8);
\draw (0,0.5) node[left] {\scriptsize $c_2$};
\draw[->] (-0.1,1.2) -- (-0.1,1.8);
\draw (0,1.5) node[left] {\scriptsize $c_1$};
\draw[->] (-0.1,2.2) -- (-0.1,2.8);
\draw (0,2.5) node[left] {\scriptsize $x$};
\draw[->] (-0.1,3.2) -- (-0.1,3.8);
\draw (0,3.5) node[left] {\scriptsize $y$};

%% Vertical incoming labels
\draw[->] (0.1,0.8) -- (0.1,0.2);
\draw (0,0.5) node[right] {\scriptsize $a_3$};
\draw[->] (0.1,1.8) -- (0.1,1.2);
\draw (0,1.5) node[right] {\scriptsize $a_4$};
\draw[->] (0.1,2.8) -- (0.1,2.2);
\draw (0.1,2.5) node[right] {\scriptsize $a_5$};
\draw[->] (0.1,3.8) -- (0.1,3.2);
\draw (0.1,3.5) node[right] {\scriptsize $z$};

%% Outer cycle outoing labels
\draw[->] (center) ++(40:\radius-0.3em) arc
			(40:20:\radius-0.3em);
\draw (center) ++(30:\radius-0.8em) node {\scriptsize $a_3$};
\draw[->] (center) ++(-8:\radius-0.3em) arc
			(-8:-28:\radius-0.3em);
\draw (center) ++(-18:\radius-0.8em) node {\scriptsize $a_4$};

%% Outer cycle incoming labels
\draw[<-] (center) ++(88:\radius+0.3em) arc
			(88:68:\radius+0.3em);
\draw (center) ++(78:\radius+0.8em) node {\scriptsize $b_3$};
\draw[<-] (center) ++(64:\radius+0.3em) arc
			(64:44:\radius+0.3em);
\draw (center) ++(54:\radius+0.8em) node {\scriptsize $b_4$};

%% draw the other side labels
\draw[<-] (center) ++(208:\radius+0.3em) arc
			(208:188:\radius+0.3em);
\draw (center) ++(198:\radius+0.8em) node {\scriptsize $a_3$};
\draw[<-] (center) ++(184:\radius+0.3em) arc
			(184:164:\radius+0.3em);
\draw (center) ++(174:\radius+0.8em) node {\scriptsize $a_4$};

\end{tikzpicture}
} %scalebox
}
\subfloat[\label{fig:case4522}] {
\scalebox{0.9} {
\begin{tikzpicture}[scale=0.8]
\coordinate (center) at (0,0);
\def\radius{4.0cm}
% draw the circle
\draw (center) circle[radius=\radius];
\draw[fill] (center) circle[radius=2pt];
\draw (center) node[below] {\small $z_0$};
\draw (center) -- ++(90:\radius);
\draw (center) -- ++(-30:\radius);
\draw (center) -- ++(-150:\radius);

% draw the u_1..u_2, v_1..v_2, w_1..w_2 paths
\foreach \i in {1,2,3} {
\fill (center) ++(90:\i) circle[radius=2pt] 
	node[left] {\small $u_{\i}$};
\fill (center) ++(-30:\i) circle[radius=2pt]
	node[above] {\small $v_{\i}$};
\fill (center) ++(-150:\i) circle[radius=2pt]
	node[below] {\small $w_{\i}$};
}

% draw the outer cycle labels.
\foreach \i in {4,5,6,7,8} {
\fill (center) ++(186-24*\i:\radius) circle[radius=2pt];
\draw (center) ++(186-24*\i:\radius+0.8em)	
	node {\small $u_{\i}$};
\fill (center) ++(66-24*\i:\radius) circle[radius=2pt];
\draw (center) ++(66-24*\i:\radius+0.8em)
	node {\small $v_{\i}$};
\fill (center) ++(306-24*\i:\radius) circle[radius=2pt];
\draw (center) ++(306-24*\i:\radius+0.8em)
	node {\small $w_{\i}$};

}

%% Vertical outgoing labels
\draw[->] (-0.1,0.2) -- (-0.1,0.8);
\draw (0,0.5) node[left] {\scriptsize $c_2$};
\draw[->] (-0.1,1.2) -- (-0.1,1.8);
\draw (0,1.5) node[left] {\scriptsize $c_1$};
\draw[->] (-0.1,2.2) -- (-0.1,2.8);
\draw (0,2.5) node[left] {\scriptsize $x$};
\draw[->] (-0.1,3.2) -- (-0.1,3.8);
\draw (0,3.5) node[left] {\scriptsize $y$};

%% Vertical incoming labels
\draw[->] (0.1,0.8) -- (0.1,0.2);
\draw (0,0.5) node[right] {\scriptsize $a_3$};
\draw[->] (0.1,1.8) -- (0.1,1.2);
\draw (0,1.5) node[right] {\scriptsize $a_4$};
\draw[->] (0.1,2.8) -- (0.1,2.2);
\draw (0.1,2.5) node[right] {\scriptsize $a_5$};
\draw[->] (0.1,3.8) -- (0.1,3.2);
\draw (0.1,3.5) node[right] {\scriptsize $z$};

%% Outer cycle outoing labels
\draw[->] (center) ++(40:\radius-0.3em) arc
			(40:20:\radius-0.3em);
\draw (center) ++(30:\radius-0.8em) node {\scriptsize $a_4$};
\draw[->] (center) ++(-8:\radius-0.3em) arc
			(-8:-28:\radius-0.3em);
\draw (center) ++(-18:\radius-0.8em) node {\scriptsize $a_3$};

%% Outer cycle incoming labels
\draw[<-] (center) ++(88:\radius+0.3em) arc
			(88:68:\radius+0.3em);
\draw (center) ++(78:\radius+0.8em) node {\scriptsize $b_4$};
\draw[<-] (center) ++(64:\radius+0.3em) arc
			(64:44:\radius+0.3em);
\draw (center) ++(54:\radius+0.8em) node {\scriptsize $b_3$};

%% draw the other side labels
\draw[<-] (center) ++(208:\radius+0.3em) arc
			(208:188:\radius+0.3em);
\draw (center) ++(198:\radius+0.8em) node {\scriptsize $a_4$};
\draw[<-] (center) ++(184:\radius+0.3em) arc
			(184:164:\radius+0.3em);
\draw (center) ++(174:\radius+0.8em) node {\scriptsize $a_3$};

\end{tikzpicture}
} %scalebox
}
\caption{\small Illustration for Lemma \ref{lem:dg45}, Case 2}
\label{fig:fig452}
\end{figure}

For the first solution in (\ref{eq:sol45}) as illustrated in Figure
\ref{fig:fig452}\subref{fig:case4521}, we have $x_7=a_3$, $x_9=a_4$,
$y_5=b_3$ and $y_6=b_4$. Letting $(x_3,x_4)=(x,y)$ and $y_4=z$ we have the
following constraints,
\begin{subequations}\label{eq:t4521}
\begin{align}
(x_1,x_2,x_3,x_4,x_7,x_9)=(c_2,c_1,x,y,a_3,a_4), &\quad
\{x_5,x_6,x_8\}=(u_4\backslash v_4)\backslash \{a_3,a_4\}, \\
(y_1,y_2,y_3,y_4,y_5,y_6)=(a_3,a_4,a_5,z,b_3,b_4), &\quad
\{y_7,y_8,y_9\}=(v_4\backslash u_4)\backslash \{b_3,b_4\}.
\end{align}
\end{subequations}
Examining the above cases using {\tt simpcomp} \cite{simpcomp}, we get the following
solution,
\begin{align*}
X_3 = (c_2,c_1,b_2,a_1,b_1,c_5,a_3,a_2,a_4), &\quad
Y_3 = (a_3,a_4,a_5,c_5,b_3,b_4,b_5,c_1,b_2).
\end{align*}
The tuple $(X_3,Y_3)$ yeilds the complex $N_4$.

For the second solution in (\ref{eq:sol45}) as illustrated in Figure
\ref{fig:fig452}\subref{fig:case4522}, we have $x_8=a_4$, $x_9=a_3$,
$y_5=b_4$ and $y_7=b_3$. Putting $(x_3,x_4)=(x,y)$ and $y_4=z$, we have the following constraints,
\begin{subequations}\label{eq:t4522}
\begin{align}
(x_1,x_2,x_3,x_4,x_8,x_9)=(c_2,c_1,x,y,a_4,a_3), &\quad
\{x_5,x_6,x_7\}=(u_4\backslash v_4)\backslash \{a_3,a_4\}, \\
(y_1,y_2,y_3,y_4,y_5,y_7)=(a_3,a_4,a_5,z,b_4,b_3), &\quad
\{y_6,y_8,y_9\}=(v_4\backslash u_4)\backslash \{b_3,b_4\}.
\end{align}
\end{subequations}
We do not get any solutions for minimal member of ${\cal C}$ with above
constraints. Thus $N_2,N_3,N_4$ are the only minimal elements of ${\cal C}$
with $G_{4,5}$ as the dual graph.
\end{proof}

\begin{lemma}\label{lem:dg54}
Let $M\in {\cal C}$ be minimal with $\Lambda(M)\cong G_{5,4}$. Then $M\cong
N_5,N_6,\ldots, N_{11}$ or $N_{12}$.
\end{lemma}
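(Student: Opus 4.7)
The plan is to parallel the proofs of Lemmas \ref{lem:dg36} and \ref{lem:dg45}, but now for the graph $G_{5,4}$, whose spokes $p_{zu},p_{zv},p_{zw}$ have length $5$ and whose outer arcs $p_{uv},p_{vw},p_{wu}$ have length $4$. I identify facets of $M$ with vertices of $G_{5,4}$ so that $\Phi$ induces $\prod_{i=1}^{8}(u_i,v_i,w_i)$, and use the tuple representation $(X,Y)=((x_1,\ldots,x_9),(y_1,\ldots,y_9))$ from equation~(\ref{eq:tuples}).

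The first step is to fix the opening coordinates of $X$ and $Y$. By Proposition \ref{prop:degthree} and the counting argument in the proof of Lemma \ref{lem:dg45}, the set $\{u_5,v_5,w_5\}$ already covers $V$, so $u_5$ meets every $\Phi$-orbit. Together with Proposition \ref{prop:pathprop} (which confines $x_1,\ldots,x_5$ to $z_0$) and Lemma \ref{lem:leaveseq}, this forces $(x_1,x_2)=(c_2,c_1)$ and restricts which positions among $x_3,x_4,x_5$ can hold $\Phi_1$- or $\Phi_2$-elements. Dually, applying Lemma \ref{lem:oriented} to trees containing long segments of two adjacent spokes (whose length $5$ is comfortably accommodated in a tree of $10$ vertices) forces $y_1,y_2,y_3$ to come from pairwise distinct orbits in $\{\Phi_3,\Phi_4,\Phi_5\}$, so $(y_1,y_2,y_3)=(a_3,a_4,a_5)$ by Lemma \ref{lem:leaveseq}(a),(c).

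The second step is to catalogue the possible shapes of the trees $T_{a_3},T_{a_4},T_{a_5}$. Each has $10$ vertices, avoids $z_0$, and by Lemma \ref{lem:treelemma}(b) its leaves have $\Lambda(M)$-degree at most two, while by Lemma \ref{lem:treelemma}(a) distinct trees cannot coincide. Each $T_{a_i}$ initially occupies an interval of the $u$-spoke beginning at $u_1$ and can re-enter the complex only through a junction $u_5$, $v_5$ or $w_5$, so it must extend along some combination of the three outer arcs. Locating, for each tree, the leaving edge on the $u$-spoke and the arc(s) on which the remaining vertices are distributed, and using Lemma \ref{lem:oriented} (label distinctness on an oriented rooted subtree) together with Lemma \ref{lem:pathlemma2} (applied to the outer path $u_5u_6u_7u_8v_5$) to rule out incompatible combinations, I obtain a finite list of wrap configurations for the triple $(T_{a_3},T_{a_4},T_{a_5})$. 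Each configuration fixes several of the coordinates $x_6,\ldots,x_9$ and $y_5,\ldots,y_7$, leaving only $(x_3,x_4,x_5)$, $y_4$, and short sequences in $u_5\setminus v_5$ and $v_5\setminus u_5$ as free parameters.

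The third step is a computer enumeration. For each configuration, I generate all completions of $(X,Y)$ consistent with the constraints above and test each candidate with \texttt{simpcomp} \cite{simpcomp} to verify it defines a member of $\lKfiven$, pruning non-minimal candidates via the permutations $\pi_i,\pi_{i,j},\gamma_{\alpha,\beta}$ of Lemma \ref{lem:perms}. Reading off Table \ref{tbl:facets}, the eight surviving tuples are precisely the string representations of $N_5,N_6,\ldots,N_{12}$. The main obstacle is the proliferation of tree-wrap configurations: unlike the $G_{3,6}$ and $G_{4,5}$ settings, where only a handful of geometric arrangements survive, the longer spokes of $G_{5,4}$ give each $T_{a_i}$ markedly more room to extend, and keeping the enumeration both exhaustive and free of isomorphic duplicates requires a careful, systematic application of the minimality pruning at every branching point.
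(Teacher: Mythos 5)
Your overall architecture (structural constraints on $(X,Y)$, a catalogue of admissible tree shapes, then a \texttt{simpcomp} search pruned via Lemma \ref{lem:perms}) is the same as the paper's, but your first step imports a constraint from the $G_{4,5}$ case that is false for $G_{5,4}$, and it is fatal to the enumeration. First, $\{u_5,v_5,w_5\}$ is \emph{not} a critical set here: deleting it from $\Lambda(M)$ leaves the central component $\{z_0\}\cup\bigl(\bigcup_{i=1}^{4}\{u_i,v_i,w_i\}\bigr)$ with $13\geq 10$ vertices, so Proposition \ref{prop:critical} gives nothing, and indeed $u_5$ need not meet every orbit (in $N_5$ the facet at $u_5$ is $u_{5,5}=a_2a_3a_4b_4a_5b_5$, which misses $\Phi_1$ entirely). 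Second, and decisively, $(x_1,x_2)=(c_2,c_1)$ is impossible when $\Lambda(M)\cong G_{5,4}$. Along the spoke $z_0u_1\cdots u_5$ all internal vertices have degree two, so by Proposition \ref{prop:pathprop} and Lemma \ref{lem:pathlemma2} the labels $x_1,\ldots,x_5$ are five distinct elements of $z_0\setminus u_5$; since $|z_0|=6$, some entire orbit lies among them, and if it sits at positions $p<q<r$ the corresponding three trees each have exactly $p+q+r-2$ vertices, forcing $(p,q,r)=(3,4,5)$. As $x_1=c_2$, that orbit is $\Phi_1$, and minimality then yields $(x_1,x_2,x_3)=(c_2,b_2,c_1)$ with $\{x_4,x_5\}=\{a_1,b_1\}$. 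Consistently, every one of $N_5,\ldots,N_{12}$ has $x_2=u_{i,1}\setminus u_{i,2}=\{b_2\}$ (Table \ref{tbl:facets}); a search seeded with $x_2=c_1$ therefore excludes all eight target complexes, returns nothing, and would "prove" the false assertion that no minimal member of ${\cal C}$ has dual graph $G_{5,4}$.

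A secondary gap: the constraints that actually make the case analysis finite and small in the paper, namely $y_4\in\{b_5,c_5\}$, $y_5\notin\Phi_5$, and the coupling $y_4=b_5\Rightarrow(x_4,x_5)=(b_1,a_1)$, $y_4=c_5\Rightarrow(x_4,x_5)=(a_1,b_1)$, are obtained from neighborliness of $M$, i.e.\ from forcing specific pairs of trees (such as $T_{c_1}$ with $T_{a_5}$, and $T_{a_1}$ with $T_{y_4}$) to intersect. The toolkit you invoke for step two (Lemmas \ref{lem:oriented}, \ref{lem:pathlemma2}, \ref{lem:treelemma}) does not deliver these restrictions, so even setting aside the seeding error, your claim that each wrap configuration pins down $x_6,\ldots,x_9$ and $y_5,\ldots,y_7$ while leaving only a few free parameters is not substantiated; you would need the intersection arguments (or a much larger brute-force space together with a proof that it is exhaustive) to reach the paper's four cases and the complexes $N_5,\ldots,N_{12}$.
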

\begin{proof}
We begin by proving the following claims:
\begin{enumerate}[{\rm (a)}]
\item $(x_1,x_2,x_3)=(c_2,b_2,c_1)$, $(x_4,x_5)\in
\{(a_1,b_1),(b_1,a_1)\}$.
\item $(y_1,y_2,y_3)=(a_3,a_4,a_5)$, $y_4\in \{b_5,c_5\}$.
\item $y_5\in \{b_3,c_3,b_4,c_4\}$.
\item $y_4=b_5\Rightarrow (x_4,x_5)=(b_1,a_1)$, $y_4=c_5\Rightarrow
(x_4,x_5)=(a_1,b_1)$.
\end{enumerate}
By Lemma \ref{lem:pathlemma2}, we know that
$\{x_1,x_2,\ldots,x_5\}=z_0\backslash u_5$. Since $z_0=\Phi_1\cup \Phi_2$,
we have $\Phi_1\subseteq \{x_1,x_2,\ldots,x_5\}$ or $\Phi_2\subseteq
\{x_1,x_2,\ldots,x_5\}$. By Lemma \ref{lem:leaveseq}, we have $x_1=c_2$.
Let $1\leq p<q<r\leq 5$ be such that $\{x_p,x_q,x_r\}$ is one of the orbits
$\Phi_1$ or $\Phi_2$. Then it can be seen that for $x\in \{x_p,x_q,x_r\}$,
$T_x$ contains $p+q+r-2$ vertices. Thus we must have $p+q+r-2=10$, from
which it follows that $(p,q,r)=(3,4,5)$. Since $x_1=c_2$, $\{x_p,x_q,x_r\}$ must be the orbit
$\Phi_1=\{a_1,b_1,c_1\}$. Hence $x_2\in \Phi_2$. As in Claim (c) in Lemma
\ref{lem:dg36}, we conclude $x_2=b_2$. From Lemma \ref{lem:leaveseq}, we
further have $x_3=c_1$. Therefore $\{x_4,x_5\}=\{a_1,b_1\}$. This proves
(a).

For part (b), $(y_1,y_2,y_3)=(a_3,a_4,a_5)$ follows exactly as in the proof
of Claim (b) in Lemma \ref{lem:dg45}. We now show that $y_4\in \Phi_5$.
Suppose $y_4\not\in \Phi_5$. We show that in this case the trees $T_{c_1}$
and $T_{a_5}$ do not intersect. Since $\Phi_1\subseteq
\{x_1,x_2,\ldots,x_5\}$, we see that $T_{c_1}$ is contained in the union of
the three paths $z_0u_1\cdots u_4$, $z_0v_1\cdots v_4$ and $z_0w_1\cdots
w_4$. Thus $T_{a_5}$ and $T_{c_1}$ must intersect along one of the above
three paths. Clearly $T_{c_1}$ and $T_{a_5}$ do not intersect along the
path $z_0u_1\cdots u_4$. However, if $y_4\not\in \Phi_5$, we see that
$T_{a_5}$ does not contain any vertex on the paths $z_0v_1\cdots v_4$ and
$z_0w_1\cdots w_4$ and hence $T_{a_5}$ cannot intersect $T_{c_1}$, a
contradiction to neighborliness of $M$. Thus $y_4\in \Phi_5$. 

For part (c), it is enough to show that $y_5\not\in \Phi_5$. If $y_5\in
\Phi_5$, then we see that $\Phi_5\subseteq u_5$. Hence $\Phi_5\subseteq
v_5$ and $\Phi_5\subseteq w_5$. But then $|u_5\backslash v_5|\leq 3$. But
this contradicts Lemma \ref{lem:pathlemma2} along the path $u_5u_6\cdots
v_5$, as the four oriented edges
$\ovr{u_5u_6},\ovr{u_6u_7},\ovr{u_7u_8},\ovr{u_8v_5}$ cannot
all have distinct labels. This proves (c).

For part (d), we again use the neighborliness of $M$. Suppose $y_4=b_5$. As
before, the trees $T_{a_1}$ and $T_{b_5}$ must intersect along one of the
paths $z_0u_1\cdots u_4$, $z_0v_1\cdots v_4$ and $z_0w_1\cdots w_4$. We see
that if $y_4=b_4$, $T_{b_5}$ does not contain any vertex from the path
$z_0w_1\cdots w_4$ (As both $z_0$ and $w_5$ do not contain $b_5$). On the
path $z_0v_1\cdots v_4$, $T_{b_5}$ contains $v_4,v_3$, whereas $T_{a_1}$
contains $z_0,v_1,v_2$. Thus $T_{a_1}$ and $T_{b_5}$ must intersect along
$z_0u_1\cdots u_4$. Now $y_4=b_5$, implies only vertex on $z_0u_1\cdots
u_4$ on $T_{b_5}$ is $u_4$. Hence $T_{a_1}$ must also contain $u_4$, and
thus $x_4\neq a_1$. Thus $(x_4,x_5)=(b_1,a_1)$. Similarly if $y_4=c_5$, we
can show that $(x_4,x_5)=(a_1,b_1)$. 
\medskip

\noindent{\bf Case 1:} $y_5\in \{b_3,c_3\}$, $y_4=b_5$. From Claim (d), we
must have $(x_4,x_5)=(b_1,a_1)$ (see Figure \ref{fig:case541}).  Consider
the tree $T_{a_4}$. Notice that $u_5\cap \Phi_4=\{a_4\}$, and hence
$a_4\not\in v_5,w_5$. Thus $T_{a_4}\subseteq \Lambda(M)-\{z_0,v_5,w_5\}$. 
In other words, $T_{a_4}$ is contained
in the union of the path ${\cal P}=z_0u_1\cdots u_5$ and the arc ${\cal
A}=w_6w_7\cdots u_5\cdots u_8$. Since $T_{a_4}$ contains $4$ vertices on
${\cal P}$, it must induce a path containing $7$ vertices (including $u_5$)
on ${\cal A}$. The only possible solution is $T_{a_4}=u_2u_3u_4u_5\cup
w_6w_7w_8u_5u_6u_7u_8$. This gives us $x_9=a_4$, $y_6=b_4$. Putting $y_5=x\in
\{b_3,c_3\}$, we get the following constraints
\begin{subequations}\label{eq:t541}
\begin{align}
(x_1,x_2,x_3,x_4,x_5,x_9)=(c_2,b_2,c_1,b_1,a_1,a_4), &\quad
\{x_6,x_7,x_8\}=(u_5\backslash v_5)\backslash \{a_4\}, \\
(y_1,y_2,y_3,y_4,y_5,y_6)=(a_3,a_4,a_5,b_5,x,b_4), &\quad
\{y_7,y_8,y_9\}=(v_5\backslash u_5)\backslash \{b_4\}.
\end{align}
\end{subequations}

%%% FIGURE DG54 CASE 1 %%%
\begin{figure}[htbp]
\centering
\scalebox{0.8}{
\begin{tikzpicture}[scale=0.8]
\coordinate (center) at (0,0);
\def\radius{5.0cm}
% draw the circle
\draw (center) circle[radius=\radius];
\draw[fill] (center) circle[radius=2pt];
\draw (center) node[below] {\small $z_0$};
\draw (center) -- ++(90:\radius);
\draw (center) -- ++(-30:\radius);
\draw (center) -- ++(-150:\radius);

% draw the u_1..u_2, v_1..v_2, w_1..w_2 paths
\foreach \i in {1,2,3,4} {
\fill (center) ++(90:\i) circle[radius=2pt] 
	node[left] {\small $u_{\i}$};
\fill (center) ++(-30:\i) circle[radius=2pt]
	node[above] {\small $v_{\i}$};
\fill (center) ++(-150:\i) circle[radius=2pt]
	node[below] {\small $w_{\i}$};
}

% draw the outer cycle labels.
\foreach \i in {5,6,7,8} {
\fill (center) ++(240-30*\i:\radius) circle[radius=2pt];
\draw (center) ++(240-30*\i:\radius+0.8em)	
	node {\small $u_{\i}$};
\fill (center) ++(120-30*\i:\radius) circle[radius=2pt];
\draw (center) ++(120-30*\i:\radius+0.8em)
	node {\small $v_{\i}$};
\fill (center) ++(360-30*\i:\radius) circle[radius=2pt];
\draw (center) ++(360-30*\i:\radius+0.8em)
	node {\small $w_{\i}$};

}

%% Vertical outgoing labels
\draw[->] (-0.1,0.2) -- (-0.1,0.8);
\draw (0,0.5) node[left] {\scriptsize $c_2$};
\draw[->] (-0.1,1.2) -- (-0.1,1.8);
\draw (0,1.5) node[left] {\scriptsize $b_2$};
\draw[->] (-0.1,2.2) -- (-0.1,2.8);
\draw (0,2.5) node[left] {\scriptsize $c_1$};
\draw[->] (-0.1,3.2) -- (-0.1,3.8);
\draw (0,3.5) node[left] {\scriptsize $b_1$};
\draw[->] (-0.1,4.2) -- (-0.1,4.8);
\draw (0,4.5) node[left] {\scriptsize $a_1$};

%% Vertical incoming labels
\draw[->] (0.1,0.8) -- (0.1,0.2);
\draw (0,0.5) node[right] {\scriptsize $a_3$};
\draw[->] (0.1,1.8) -- (0.1,1.2);
\draw (0,1.5) node[right] {\scriptsize $a_4$};
\draw[->] (0.1,2.8) -- (0.1,2.2);
\draw (0.1,2.5) node[right] {\scriptsize $a_5$};
\draw[->] (0.1,3.8) -- (0.1,3.2);
\draw (0.1,3.5) node[right] {\scriptsize $b_5$};
\draw[->] (0.1,4.8) -- (0.1,4.2);
\draw (0.1,4.5) node[right] {\scriptsize $x$};

%% Outer cycle outoing labels
\draw[->] (center) ++(-4:\radius-0.3em) arc
			(-4:-26:\radius-0.3em);
\draw (center) ++(-15:\radius-0.8em) node {\scriptsize $a_4$};

%% Outer cycle incoming labels
\draw[<-] (center) ++(86:\radius+0.3em) arc
			(86:64:\radius+0.3em);
\draw (center) ++(75:\radius+0.8em) node {\scriptsize $b_4$};

%% draw the other side labels
\draw[<-] (center) ++(206:\radius+0.3em) arc
			(206:184:\radius+0.3em);
\draw (center) ++(195:\radius+0.8em) node {\scriptsize $a_4$};

\end{tikzpicture}
}%scalebox
\caption{\small Illustration for Lemma \ref{lem:dg54}, Case 1}
\label{fig:case541}
\end{figure}
This case yeilds the following solution when examined using {\tt simpcomp}
\cite{simpcomp}
\begin{align*}
X_1=(c_2,b_2,c_1,b_1,a_1,a_3,a_2,a_5,a_4), &\quad 
Y_1=(a_3,a_4,a_5,b_5,b_3,b_4,b_2,c_3,c_5).
\end{align*}
The pair $(X_1,Y_1)$ yeilds the complex $N_{11}$.
\medskip

\noindent{\bf Case 2:} $y_5\in \{b_3,c_3\}$, $y_4=c_5$. This is similar to
Case 1, except that $y_4=c_5$ and hence from Claim (d),
$(x_4,x_5)=(a_1,b_1)$. Letting $y_5=x\in \{b_3,c_3\}$, we get the
constraints,
\begin{subequations}\label{eq:t542}
\begin{align}
(x_1,x_2,x_3,x_4,x_5,x_9)=(c_2,b_2,c_1,a_1,b_1,a_4), &\quad
\{x_6,x_7,x_8\}=(u_5\backslash v_5)\backslash \{a_4\}, \\
(y_1,y_2,y_3,y_4,y_5,y_6)=(a_3,a_4,a_5,c_5,b_3,b_4), &\quad
\{y_7,y_8,y_9\}=(v_5\backslash u_5)\backslash \{b_4\}.
\end{align}
\end{subequations}
Using {\tt simpcomp} \cite{simpcomp} we get the following solution for this case
\begin{align*}
X_2=(c_2,b_2,c_1,a_1,b_1,a_3,a_2,c_5,a_4), &\quad
Y_2=(a_3,a_4,a_5,c_5,b_3,b_4,b_2,c_3,b_5).
\end{align*}
The pair $(X_2,Y_2)$ gives the complex $N_{12}$.
\medskip

\noindent{\bf Case 3:} $y_5\in \{b_4,c_4\}$, $y_4=b_5$. From Claim (d), we
have $(x_4,x_5)=(b_1,a_1)$. We notice that $u_5\cap \Phi_3=\{a_3\}$, and
hence $a_3\not\in z_0\cup v_3\cup w_3$. Thus $T_{a_3}$ is contained in the
 union of the arc ${\cal A}=w_6w_7\cdots u_5\cdots u_8$ and the path
${\cal P}=u_1u_2\cdots u_5$. Since $T_{a_3}$ contains $5$ vertices on the
path ${\cal P}$, it must induce a $6$ vertex path on ${\cal A}$. We have
the following possiblities for $T_{a_3}$,
\begin{align}\label{eq:possib2}
T_{a_3} &= u_1u_2u_3u_4u_5\cup w_7w_8u_5u_6u_7u_8, \nonumber \\
T_{a_3} &= u_1u_2u_3u_4u_5\cup w_6w_7w_8u_5u_6u_7.
\end{align}

%%% FIGURE DG 54 CASE 3
\begin{figure}[htbp]
\centering
\subfloat[Case 3.1\label{fig:cases5431}] {
\scalebox{0.8}{
\begin{tikzpicture}[scale=0.8]
\coordinate (center) at (0,0);
\def\radius{5.0cm}
% draw the circle
\draw (center) circle[radius=\radius];
\draw[fill] (center) circle[radius=2pt];
\draw (center) node[below] {\small $z_0$};
\draw (center) -- ++(90:\radius);
\draw (center) -- ++(-30:\radius);
\draw (center) -- ++(-150:\radius);

% draw the u_1..u_2, v_1..v_2, w_1..w_2 paths
\foreach \i in {1,2,3,4} {
\fill (center) ++(90:\i) circle[radius=2pt] 
	node[left] {\small $u_{\i}$};
\fill (center) ++(-30:\i) circle[radius=2pt]
	node[above] {\small $v_{\i}$};
\fill (center) ++(-150:\i) circle[radius=2pt]
	node[below] {\small $w_{\i}$};
}

% draw the outer cycle labels.
\foreach \i in {5,6,7,8} {
\fill (center) ++(240-30*\i:\radius) circle[radius=2pt];
\draw (center) ++(240-30*\i:\radius+0.8em)	
	node {\small $u_{\i}$};
\fill (center) ++(120-30*\i:\radius) circle[radius=2pt];
\draw (center) ++(120-30*\i:\radius+0.8em)
	node {\small $v_{\i}$};
\fill (center) ++(360-30*\i:\radius) circle[radius=2pt];
\draw (center) ++(360-30*\i:\radius+0.8em)
	node {\small $w_{\i}$};

}

%% Vertical outgoing labels
\draw[->] (-0.1,0.2) -- (-0.1,0.8);
\draw (0,0.5) node[left] {\scriptsize $c_2$};
\draw[->] (-0.1,1.2) -- (-0.1,1.8);
\draw (0,1.5) node[left] {\scriptsize $b_2$};
\draw[->] (-0.1,2.2) -- (-0.1,2.8);
\draw (0,2.5) node[left] {\scriptsize $c_1$};
\draw[->] (-0.1,3.2) -- (-0.1,3.8);
\draw (0,3.5) node[left] {\scriptsize $b_1$};
\draw[->] (-0.1,4.2) -- (-0.1,4.8);
\draw (0,4.5) node[left] {\scriptsize $a_1$};

%% Vertical incoming labels
\draw[->] (0.1,0.8) -- (0.1,0.2);
\draw (0,0.5) node[right] {\scriptsize $a_3$};
\draw[->] (0.1,1.8) -- (0.1,1.2);
\draw (0,1.5) node[right] {\scriptsize $a_4$};
\draw[->] (0.1,2.8) -- (0.1,2.2);
\draw (0.1,2.5) node[right] {\scriptsize $a_5$};
\draw[->] (0.1,3.8) -- (0.1,3.2);
\draw (0.1,3.5) node[right] {\scriptsize $b_5$};
\draw[->] (0.1,4.8) -- (0.1,4.2);
\draw (0.1,4.5) node[right] {\scriptsize $x$};

%% Outer cycle outoing labels
\draw[->] (center) ++(-4:\radius-0.3em) arc
			(-4:-26:\radius-0.3em);
\draw (center) ++(-15:\radius-0.8em) node {\scriptsize $a_3$};

%% Outer cycle incoming labels
\draw[<-] (center) ++(56:\radius+0.3em) arc
			(56:34:\radius+0.3em);
\draw (center) ++(45:\radius+0.8em) node {\scriptsize $b_3$};

%% draw the other side labels
\draw[<-] (center) ++(176:\radius+0.3em) arc
			(176:152:\radius+0.3em);
\draw (center) ++(165:\radius+0.8em) node {\scriptsize $a_3$};

\end{tikzpicture}
}%scalebox
}
\subfloat[Case 3.2\label{fig:cases5432}] {
\scalebox{0.8}{
\begin{tikzpicture}[scale=0.8]
\coordinate (center) at (0,0);
\def\radius{5.0cm}
% draw the circle
\draw (center) circle[radius=\radius];
\draw[fill] (center) circle[radius=2pt];
\draw (center) node[below] {\small $z_0$};
\draw (center) -- ++(90:\radius);
\draw (center) -- ++(-30:\radius);
\draw (center) -- ++(-150:\radius);

% draw the u_1..u_2, v_1..v_2, w_1..w_2 paths
\foreach \i in {1,2,3,4} {
\fill (center) ++(90:\i) circle[radius=2pt] 
	node[left] {\small $u_{\i}$};
\fill (center) ++(-30:\i) circle[radius=2pt]
	node[above] {\small $v_{\i}$};
\fill (center) ++(-150:\i) circle[radius=2pt]
	node[below] {\small $w_{\i}$};
}

% draw the outer cycle labels.
\foreach \i in {5,6,7,8} {
\fill (center) ++(240-30*\i:\radius) circle[radius=2pt];
\draw (center) ++(240-30*\i:\radius+0.8em)	
	node {\small $u_{\i}$};
\fill (center) ++(120-30*\i:\radius) circle[radius=2pt];
\draw (center) ++(120-30*\i:\radius+0.8em)
	node {\small $v_{\i}$};
\fill (center) ++(360-30*\i:\radius) circle[radius=2pt];
\draw (center) ++(360-30*\i:\radius+0.8em)
	node {\small $w_{\i}$};

}

%% Vertical outgoing labels
\draw[->] (-0.1,0.2) -- (-0.1,0.8);
\draw (0,0.5) node[left] {\scriptsize $c_2$};
\draw[->] (-0.1,1.2) -- (-0.1,1.8);
\draw (0,1.5) node[left] {\scriptsize $b_2$};
\draw[->] (-0.1,2.2) -- (-0.1,2.8);
\draw (0,2.5) node[left] {\scriptsize $c_1$};
\draw[->] (-0.1,3.2) -- (-0.1,3.8);
\draw (0,3.5) node[left] {\scriptsize $b_1$};
\draw[->] (-0.1,4.2) -- (-0.1,4.8);
\draw (0,4.5) node[left] {\scriptsize $a_1$};

%% Vertical incoming labels
\draw[->] (0.1,0.8) -- (0.1,0.2);
\draw (0,0.5) node[right] {\scriptsize $a_3$};
\draw[->] (0.1,1.8) -- (0.1,1.2);
\draw (0,1.5) node[right] {\scriptsize $a_4$};
\draw[->] (0.1,2.8) -- (0.1,2.2);
\draw (0.1,2.5) node[right] {\scriptsize $a_5$};
\draw[->] (0.1,3.8) -- (0.1,3.2);
\draw (0.1,3.5) node[right] {\scriptsize $b_5$};
\draw[->] (0.1,4.8) -- (0.1,4.2);
\draw (0.1,4.5) node[right] {\scriptsize $x$};

%% Outer cycle outoing labels
\draw[->] (center) ++(26:\radius-0.3em) arc
			(26:4:\radius-0.3em);
\draw (center) ++(15:\radius-0.8em) node {\scriptsize $a_3$};

%% Outer cycle incoming labels
\draw[<-] (center) ++(86:\radius+0.3em) arc
			(86:64:\radius+0.3em);
\draw (center) ++(75:\radius+0.8em) node {\scriptsize $b_3$};

%% draw the other side labels
\draw[<-] (center) ++(206:\radius+0.3em) arc
			(206:184:\radius+0.3em);
\draw (center) ++(195:\radius+0.8em) node {\scriptsize $a_3$};

\end{tikzpicture}
}%scalebox
}
\caption{\small Illustration for Lemma \ref{lem:dg54}, Case 3}
\label{fig:fig543}
\end{figure}
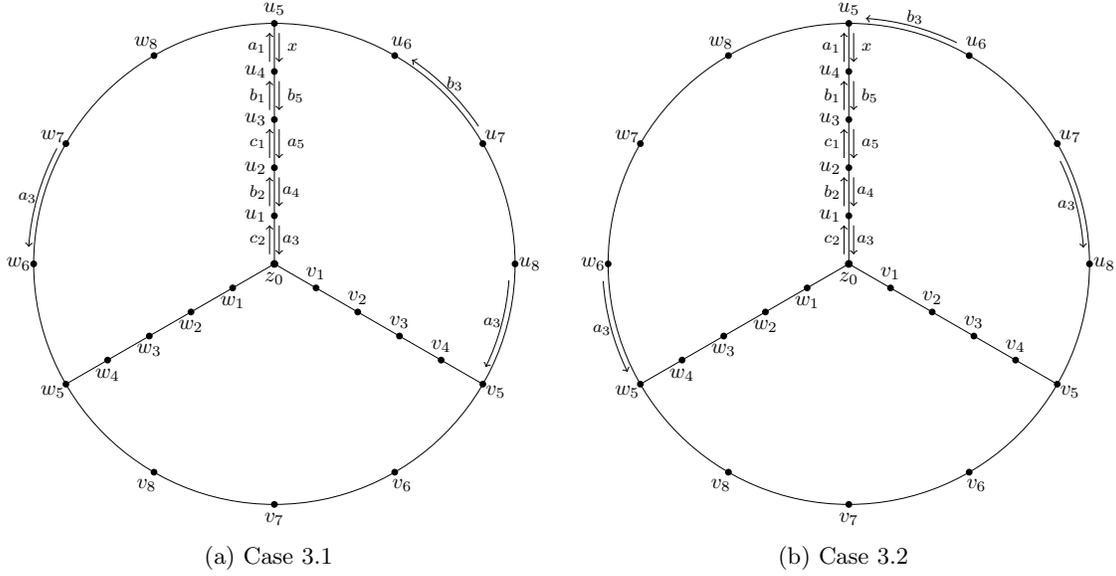

For the first solution in (\ref{eq:possib2}), we get $x_9=a_3$ and
$y_7=b_3$ (see Figure \ref{fig:fig543}\subref{fig:cases5431}). Setting $y_5=x\in \{b_4,c_4\}$, we have the following
constraints,
\begin{subequations}\label{eq:t5431}
\begin{align}
(x_1,x_2,x_3,x_4,x_5,x_9)=(c_2,b_2,c_1,b_1,a_1,a_3), &\quad
\{x_6,x_7,x_8\} = (u_5\backslash v_5)\backslash \{a_3\}, \\
(y_1,y_2,y_3,y_4,y_5,y_7)=(a_3,a_4,a_5,b_5,x,b_3), &\quad
\{y_6,y_8,y_9\} = (v_5\backslash u_5)\backslash \{b_3\}.
\end{align}
\end{subequations}
However, using {\tt simpcomp} \cite{simpcomp} we observe that the above constraints do not yeild any member of ${\cal C}$.

For the second solution in (\ref{eq:possib2}), we get $x_8=a_3$ and
$y_6=b_3$ (see Figure \ref{fig:fig543}\subref{fig:cases5432}). Setting $y_5=x\in \{b_4,c_4\}$, we have the following
constraints,
\begin{subequations}\label{eq:t5432}
\begin{align}
(x_1,x_2,x_3,x_4,x_5,x_8)=(c_2,b_2,c_1,b_1,a_1,a_3), &\quad
\{x_6,x_7,x_9\}=(u_5\backslash v_5)\backslash \{a_3\}, \\
(y_1,y_2,y_3,y_4,y_5,y_6)=(a_3,a_4,a_5,b_5,x,b_3), &\quad
\{y_7,y_8,y_9\}=(v_5\backslash u_5)\backslash \{b_3\}.
\end{align}
\end{subequations}
Using {\tt simpcomp} \cite{simpcomp} we get the following solutions
\begin{align*}
X_3 = (c_2,b_2,c_1,b_1,a_1,a_4,a_5,a_3,a_2), &\quad 
Y_3 = (a_3,a_4,a_5,b_5,b_4,b_3,c_4,c_5,b_2), \\
X_4 = (c_2,b_2,c_1,b_1,a_1,a_5,a_4,a_3,a_2), &\quad
Y_4 = (a_3,a_4,a_5,b_5,b_4,b_3,c_5,c_4,b_2), \\
X_5 = (c_2,b_2,c_1,b_1,a_1,a_5,c_4,a_3,a_2), &\quad
Y_5 = (a_3,a_4,a_5,b_5,c_4,b_3,c_5,b_4,b_2).
\end{align*}
The pairs $(X_3,Y_3),(X_4,Y_4)$ and $(X_5,Y_5)$ yeild $N_5$, $N_6$ and
$N_9$ respectively.
\medskip

\noindent{\bf Case 4:} $y_5\in \{b_4,c_4\}, y_4=c_5$. From Claim (d), we
must have $(x_4,x_5)=(a_1,b_1)$. The rest of analysis is the same as Case
3, where the possiblities for $T_{a_3}$ are given by (\ref{eq:possib2}).
For the first solution in (\ref{eq:possib2}), we have $x_9=a_3$ and
$y_7=b_3$. Setting $y_5=x\in \{b_4,c_4\}$, we have the constraints,
\begin{subequations}\label{eq:t5441}
\begin{align}
(x_1,x_2,x_3,x_4,x_5,x_9)=(c_2,b_2,c_1,a_1,b_1,a_3), &\quad
\{x_6,x_7,x_8\}=(u_5\backslash v_5)\backslash \{a_3\}, \\
(y_1,y_2,y_3,y_4,y_5,y_7)=(a_3,a_4,a_5,c_5,x,b_3), &\quad
\{y_6,y_8,y_9\}=(v_5\backslash u_5)\backslash \{b_3\}.
\end{align}
\end{subequations}
We obtain no solutions for members of ${\cal C}$ meeting above constraints.

For the second solution for $T_{a_3}$ in (\ref{eq:possib2}), we have
$x_8=a_3$ and $y_6=b_3$. Setting $y_5=x$, we have the constraints,
\begin{subequations}\label{eq:t5442}
\begin{align}
(x_1,x_2,x_3,x_4,x_5,x_8)=(c_2,b_2,c_1,a_1,b_1,a_3), &\quad
\{x_6,x_7,x_9\}=(u_5\backslash v_5)\backslash \{a_3\}, \\
(y_1,y_2,y_3,y_4,y_5,y_6)=(a_3,a_4,a_5,c_5,x,b_3), &\quad
\{y_7,y_8,y_9\}=(v_5\backslash u_5)\backslash \{b_3\}.
\end{align}
\end{subequations}
Using {\tt simpcomp} \cite{simpcomp} we obtain the following solutions for members in
${\cal C}$
\begin{align*}
X_6=(c_2,b_2,c_1,a_1,b_1,a_4,c_5,a_3,a_2), &\quad 
Y_6=(a_3,a_4,a_5,c_5,b_4,b_3,c_4,b_5,b_2), \\
X_7=(c_2,b_2,c_1,a_1,b_1,c_5,a_4,a_3,a_2), &\quad
Y_7=(a_3,a_4,a_5,c_5,b_4,b_3,b_5,c_4,b_2), \\
X_8=(c_2,b_2,c_1,a_1,b_1,c_5,c_4,a_3,a_2), &\quad
Y_8=(a_3,a_4,a_5,c_5,c_4,b_3,b_5,b_4,b_2).
\end{align*}
The pairs $(X_6,Y_6)$, $(X_7,Y_7)$ and $(X_8,Y_8)$ yeild the complexes
$N_7, N_8$ and $N_{10}$, respectively. The above cases complete the proof of
the lemma.
\end{proof}

\begin{lemma}\label{lem:dg63}
If $M\in {\cal C}$ is minimal then, $\Lambda(M)\not\cong G_{6,3}$.
\end{lemma}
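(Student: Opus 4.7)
The plan is to exploit the fact that in $G_{6,3}$ the path $z_0 u_1 u_2 \cdots u_6$ has length exactly $6 = d+1$, which pushes Proposition \ref{prop:pathprop} to saturation. Applying that proposition to this path, the labels $x_1, \ldots, x_6$ are distinct and all lie in $z_0$; since $|z_0| = 6$, we conclude $\{x_1,\ldots,x_6\} = z_0 = \Phi_1 \cup \Phi_2$. In particular, $u_6 \cap z_0 = \emptyset$, so $u_6 \subseteq \Phi_3 \cup \Phi_4 \cup \Phi_5$, and by $\Phi$-symmetry the same holds for $v_6$ and $w_6$.

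The next step is to analyze, for each $x \in z_0$, the subtree $T_x$. Since $x \notin u_6, v_6, w_6$, we have $T_x \subseteq \Lambda(M) - \{u_6, v_6, w_6\}$. I would observe that removing these three degree-three vertices from $G_{6,3}$ produces exactly four connected components: a central spider on $16$ vertices consisting of $z_0$ together with the three length-$5$ paths $z_0 u_1 \cdots u_5$, $z_0 v_1 \cdots v_5$, $z_0 w_1 \cdots w_5$, and three isolated $2$-vertex paths $u_7 u_8$, $v_7 v_8$, $w_7 w_8$. Since $T_x$ is connected and has $|T_x| = n - d = 10$ vertices by Proposition \ref{prop:nsnon}(c), it must lie in the central spider and contain $z_0$; hence $T_x$ consists of $z_0$ together with prefixes of the three paths, of lengths $p, q, r$ satisfying $p + q + r = 9$.

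The crux is a counting argument. Define $P_x$ for $x \in z_0$ to be the unique index in $\{1,\ldots,6\}$ with $x_{P_x} = x$. From the definitions of the labels on the three paths ($\Phi$-conjugates of $(x_i)$ on the $v$- and $w$-paths), one finds $p = P_x - 1$, $q = P_{\Phi^{-1}(x)} - 1$, $r = P_{\Phi(x)} - 1$, so the constraint becomes
\begin{equation*}
P_x + P_{\Phi^{-1}(x)} + P_{\Phi(x)} = 12.
\end{equation*}
The triple $\{x, \Phi^{-1}(x), \Phi(x)\}$ is exactly the $\Phi$-orbit of $x$, so this says: for each orbit $\Phi_j \subseteq z_0$ (with $j \in \{1,2\}$), the sum of positions of its three elements among $x_1,\ldots,x_6$ equals $12$. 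Adding the $\Phi_1$ and $\Phi_2$ constraints gives total $24$, but the $P_x$ for $x \in z_0$ range over a permutation of $\{1,2,3,4,5,6\}$, whose sum is $21$. This $24 \neq 21$ contradiction rules out $\Lambda(M) \cong G_{6,3}$.

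I expect steps 1 and 2 (saturation of the path bound and identifying the central-spider component) to be essentially formal, given the earlier results; the main technical point to articulate carefully is step 3, namely the clean translation $(p,q,r) = (P_x - 1, P_{\Phi^{-1}(x)} - 1, P_{\Phi(x)} - 1)$ using how the $\Phi$-action relabels edges on the $v$- and $w$-paths. Once this is in place, the parity/sum contradiction is immediate.
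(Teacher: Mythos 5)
Your proof is correct and takes essentially the same approach as the paper: both reduce to the observation that for each $\Phi$-orbit $\Phi_j\subseteq z_0$, the sum of the positions of its three elements among $x_1,\ldots,x_6$ must equal $12$ (equivalently $|T_x|=10$ forces $p+q+r-2=10$ in the paper's notation), so summing over the two orbits yields $24$, contradicting $\sum_{i=1}^{6}i=21$. The only stylistic difference is that you make the intermediate step explicit (the spider decomposition after deleting $u_6,v_6,w_6$ and the identity $(p,q,r)=(P_x-1,P_{\Phi^{-1}(x)}-1,P_{\Phi(x)}-1)$), which the paper leaves as ``it can be seen''.
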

\begin{proof}
Assume that  $M$ is a minimal member of ${\cal C}$ with
$\Lambda(M)\cong G_{6,3}$. Let $(X,Y)$ be the pair associated with $M$. Now by Lemma
\ref{lem:oriented}, we have $\{x_1,x_2,x_3,x_4,x_5,x_6\}=z_0=\Phi_1\cup
\Phi_2$. Let $p,q,r$ be such that $\{x_p,x_q,x_r\}=\Phi_1$. Then it can be
seen that $T_{a_1}$ contains $p+q+r-2$ vertices. Similarly let $i,j,k$ be
such that $\{x_i,x_j,x_k\}=\Phi_2$. Now $T_{a_2}$ contains $i+j+k-2$
vertices. But since $T_{a_1},T_{a_2}$ each contain $10$ vertices, we must
have $p+q+r+i+j+k=24$. However, $p,q,r,i,j,k$ is a permutation of
$1,2,\ldots,6$, and hence we must have $p+q+r+i+j+k=21$, a contradiction.
This proves the lemma.
\end{proof}

\begin{proof}[{\bf Proof of Theorem \ref{thm:thm1}}]
The proof follows from Lemmata \ref{lem:graphdual2},
\ref{lem:dg36}, \ref{lem:dg45}, \ref{lem:dg54}, \ref{lem:dg63}.
\end{proof}

\begin{proof}[{\bf Proof of Theorem \ref{thm:thm2}}]
The proof follows from Theorem \ref{thm:thm1} and Proposition
\ref{prop:bijection}.
\end{proof}

\noindent{\bf Acknowledgement:} The author would like to thank Basudeb
Datta for useful comments and suggestions. The author would also like to
thank `IISc Mathematics Initiative' and `UGC Centre for Advanced Study' for
support.

%\begin{lemma}\label{

\pagebreak

\section*{Appendix A}
We give a proof for the graph theoretic Lemma \ref{lem:graphs}. For
standard terminology on graphs see \cite{bm08}. For a vertex $v$ in
 a graph $G$, $d_G(v)$ will denote the degree of the vertex $v$ in $G$. For
vertices $u,v$ in $G$, $d_G(u,v)$ will denote the length of a shortest path
between $u$ and $v$ in $G$. For a vertex $a\in V(G)$ and a subset $B$ of
$V(G)$, a path $v_0v_1\cdots v_k$ such that $v_0=a$ and
$\{v_0,v_1,\ldots, v_k\}\cap B=\{v_k\}$ is called an $a$-$B$ path. The
following is an easy consequence of the {\em fan lemma} (cf. \cite[Chapter 9]{bm08}).
\begin{lemma}\label{lem:fan}
Let $G$ be a two connected graph and let $B\subseteq V(G)$. If $a\not\in
B$ and $|B|\geq 2$ then, there exist two $a$-$B$ paths in $G$, which intersect only in $a$.
\end{lemma}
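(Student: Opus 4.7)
The plan is to derive the lemma from Menger's theorem (equivalently, from the fan lemma as cited in \cite{bm08}) by a standard contraction trick.

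First, I would form an auxiliary graph $G'$ by adjoining one new vertex $x\notin V(G)$ and adding an edge from $x$ to every element of $B$. The first step is to verify that $G'$ is $2$-connected. Removing $x$ yields $G$, which is $2$-connected, hence connected. Removing any $v\in V(G)$ from $G'$ leaves the graph $G-v$ (which is connected, since $G$ is $2$-connected and therefore has no cut-vertex) together with $x$ joined to $B\setminus\{v\}$; the latter set is non-empty because $|B|\geq 2$, so some edge of the form $xb$ survives and $G'-v$ is connected. Thus $G'-v$ is connected for every $v\in V(G')$, proving $G'$ is $2$-connected.

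Next, since $a\neq x$ and $G'$ is $2$-connected, Menger's theorem gives two internally disjoint paths $P_1,P_2$ from $a$ to $x$ in $G'$. The key step is truncation: for each $i\in\{1,2\}$, let $b_i$ be the first vertex of $P_i$ (reading from $a$ toward $x$) that belongs to $B$. Such a $b_i$ exists because the vertex of $P_i$ preceding $x$ is a neighbor of $x$ in $G'$, and every neighbor of $x$ lies in $B$ by construction. Let $Q_i$ denote the subpath of $P_i$ from $a$ to $b_i$. By the choice of $b_i$, the path $Q_i$ has no internal vertex in $B$, so $Q_i$ is an $a$-$B$ path in $G$ (note that no edge of $Q_i$ is incident to $x$, so $Q_i$ indeed lies in $G$).

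Finally, I would check that $Q_1$ and $Q_2$ intersect only at $a$. Any common internal vertex of $Q_1$ and $Q_2$ is also an internal vertex of $P_1$ and $P_2$ in $G'$, contradicting the internal disjointness of $P_1,P_2$. If the endpoints coincided, $b_1=b_2$, then $b_1$ would be an internal vertex of both $P_1$ and $P_2$ in $G'$ (since $b_i\neq x$ because $b_i\in B$ and $x\notin B$), again contradicting internal disjointness. Hence $V(Q_1)\cap V(Q_2)=\{a\}$, as required. The only real point to be careful about is the $2$-connectedness check for $G'$ and the observation that the first-$B$-vertex truncation preserves both the $a$-$B$ property and the disjointness; neither is substantial, which is why the authors call the result an easy consequence of the fan lemma.
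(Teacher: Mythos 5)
Your proof is correct; the paper itself gives no argument beyond citing the fan lemma of Bondy--Murty, and your auxiliary-vertex reduction to Menger's theorem (adjoin $x$ adjacent to all of $B$, check $2$-connectivity, take two internally disjoint $a$--$x$ paths, truncate at the first vertex of $B$) is exactly the standard proof of that fan lemma in the case $k=2$. So you have simply unpacked the citation, and all the checks (non-emptiness of $B\setminus\{v\}$, existence of $b_i$, disjointness after truncation) are handled correctly.
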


We prove the following generalization of Lemma \ref{lem:graphs}.

\begin{theo}
Let graphs $G_{r,s}$ and $T_{r,s}$ be as defined in Examples $\ref{eg:grs}$
and $\ref{eg:trs}$, respectively. Let $G$ be a two connected graph on $n$
vertices with $n+2$ edges. If ${\rm Aut}(G)\supseteq {\mathbb Z}_3$ then
$G\cong G_{r,s}$ for some $r,s>0$ with $3(r+s)=n+2$ or $G\cong T_{r,s}$ for
some $r,s>0$ with $3r+s=n$.
\end{theo}
\begin{proof}
Let $\varphi$ be an order three automorphism of $G$. Let ${\rm
Fix}(\varphi) = \{v\in V(G): \varphi(v)=v\}$ denote the set of vertices
fixed by the automorphism $\varphi$. Let $T$ be the set of vertices with
degree three or more in $G$. Since $G$ is two connected, we have $d_G(v)\geq
2$ for all $v\in V(G)$. Then from the identity 
\begin{equation}
\sum_{v\in V(G)}d_G(v) =2(n+2)=2n+4, \tag{A1}\label{eq:handshake}
\end{equation}
 it follows that $|T|\leq 4$. We have the following cases:
\medskip

\noindent{\bf Case 1:} $T\not\subseteq {\rm Fix}(\varphi)$. Let $u\in T$ be
such that $\varphi(u)\neq u$. Let $v=\varphi(u)$ and $w=\varphi^2(u)$. As
$\varphi$ is an automorphism of $G$, we have $d_G(u)=d_G(v)=d_G(w)$. Let
$k$ be the degree of $u,v$ and $w$ in $G$. Clearly $k\geq 3$. Now from
(\ref{eq:handshake}), it follows that $k=3$, and there exists $z\not\in
\{u,v,w\}$ with $d_G(z)=3$. Since $\varphi$ orbits are either singleton or
three element subsets and $|T|\leq 4$,  we have $\varphi(z)=z$, or $z\in
{\rm Fix}(\varphi)$. Thus we have, $d_G(z,u)=d_G(z,v)=d_G(z,w)=r$ for some
$r\geq 1$. Let $p_{zu} := zu_1\cdots u_r(=u)$ be a shortest $z$-$u$ path in $G$. Then
$p_{zv} := zv_1\cdots v_r(=v)$ and $p_{zw} := zw_1\cdots w_r(=w)$ are $z$-$v$
and $z$-$w$ paths
respectively, where $v_i=\varphi(u_i)$ and $w_i=\varphi^2(u_i)$ for $1\leq
i\leq r$. As $G$ is two connected, $G' := G-z$ is a connected graph. Note
that $\varphi$ is an automorphism of $G'$. Let
$p_{uv} := u_ru_{r+1}\cdots u_{r+s-1}v_r$ be a shortest $u$-$v$ path
in $G'$, where $s=d_{G'}(u,v)=d_{G'}(v,w)=d_{G'}(w,u)$. Let $p_{vw} :=
v_rv_{r+1}\cdots v_{r+s-1}w_r$ and $p_{wu} := w_rw_{r+1}\cdots
w_{r+s-1}u_r$ where $v_i=\varphi(u_i)$ and $w_i=\varphi^2(u_i)$ for $r\leq
i\leq r+s-1$. We claim the following:
\begin{enumerate}[{\rm (a)}]
\item $p_{zu}\cap p_{zv}=p_{zv}\cap p_{zw}=p_{zw}\cap p_{zu}=\{z\}$.
\item $p_{uv}\cap p_{vw}=\{v\}$, $p_{vw}\cap p_{wu}=\{w\}$ and $p_{wu}\cap
p_{uv}=\{u\}$.
\item $p_{zu}\cap p_{uv}=p_{zu}\cap p_{wu}=\{u\}$, $p_{zv}\cap
p_{uv}=p_{zv}\cap p_{vw}=\{v\}$ and $p_{zw}\cap p_{vw}=p_{zw}\cap
p_{wu}=\{w\}$.
\item $p_{zu}\cap p_{vw}=p_{zv}\cap p_{wu}=p_{zw}\cap p_{uv}=\emptyset$.
\end{enumerate}
We first prove (a). Let $i>0$ be maximum such that $u_i\in p_{zu}\cap
p_{zv}$. Then $u_i=v_j$ for some $1\leq j\leq r$. Since
$i=d_G(z,u_i)=d_G(z,v_j)=j$, we have $i=j$. Because $u_r=u\neq v=v_r$, we
have $i<r$. Further, as $d_G(z,w)=r>i$, we have $u_i\neq w$. Thus $u_i\not\in
\{z,u,v,w\}$, and hence $d_G(u_i)=2$. However by maximality of $i$, we have
$\{v_{i-1},v_{i+1},u_{i+1}\}$ as three distinct neighbors of $u_i$, a
contradiction. Therefore $p_{zu}\cap p_{zv}=\{z\}$, and similar argument
works for other pairs. This proves (a). 
Claim (b) can be proved in a manner similar to Claim (a).

To prove Claim (c), we first show that $p_{zu}\cap p_{uv}=\{u\}$. Clearly
$z\not\in p_{uv}$ as $p_{uv}\subseteq G-z$. Let $0<i<r$ be maximum such that
$u_i$ is a vertex on $p_{uv}$. By Claim (a), $u_i\not\in \{z,u,v,w\}$, and
hence $d_G(u_i)=2$. Again maximality of $i$ implies that $u_{i+1}$ is
distinct from the two neighbors of $u_i$ on $p_{uv}$, a contradiction.
Therefore $p_{zu}\cap p_{uv}=\{u\}$. Similarly, we can show for other
pairs. This proves (c).
Claim (d) can be proved in a manner similar to Claim (c).

Define the subgraph 
\[ H := p_{zu}\cup p_{zv}\cup p_{zw}\cup p_{uv}\cup p_{vw}\cup p_{wu}.\] 
Observe that $d_H(v)=d_G(v)$ for all $v\in V(H)$.
Since $G$ is connected, this implies $G=H$. It can be seen that $G\cong
G_{r,s}$ and $3(r+s)=n+2$.
\medskip

\noindent{\bf Case 2:} $T\subseteq {\rm Fix}(\varphi)$. Let $u_1u_2\cdots u_r$ be a maximal path in $G-{\rm Fix}(\varphi)$. Let $x,y$ be
neighbors of $u_1$ and $u_r$ in $G$ respectively, which are not on the
path (such neighbors exist as $d_G(v)\geq 2$ for all $v\in V(G)$). By maximality
of the path, we conclude that $x,y\in {\rm Fix}(\varphi)$. Let $p_u
:= xu_1\cdots u_ry$. Then,
observe that $p_v := xv_1\cdots v_ry$ and $p_w := xw_1\cdots w_ry$ are also
$x$-$y$ paths in $G$, where $v_i=\varphi(u_i)$ and $w_i=\varphi^2(u_i)$ for $1\leq i\leq r$. We note that $x\neq y$, for otherwise we would have
$d_G(x)=6$ and it can be seen that $G-x$ cannot be connected in this case
($G-x$ would be a graph on $n-1$ vertices with $n+2-6=n-4$ edges). Let $H := p_u\cup p_v\cup p_w$. We claim the
following:
\begin{enumerate}[{\rm (a)}]
\item Paths $p_u,p_v$ and $p_w$ are vertex-independent.
\item If $V(H)=V(G)$ then $G=H+xy$. Further $G\cong T_{r,2}$.
\item If $V(H)\neq V(G)$ then $G=H\cup p$ where $p_u, p_v, p_w$ and $p$ are
vertex-independent $x$-$y$ paths. Further $G\cong T_{r,s}$ where $s$ is the
number of vertices in the path $p$.
\end{enumerate}
We first prove (a). Assume that $p_u$ and $p_v$ intersect in a vertex other
than $x$ or $y$. Let $i$ be maximum such that $u_i\in p_v$. Then $u_i=v_j$
for some $1\leq j\leq r$. Since $u_r\neq v_r$, we have $\min(i,j)<r$.
Without loss assume $i<r$. Since $T\subseteq {\rm Fix}(\varphi)$ and
$u_i\not\in {\rm Fix}(\varphi)$, we have $d_G(u_i)=2$. But we see that
$v_{j-1},v_{j+1},u_{i+1}$ are three distinct neighbors of $u_i$, a
contradiction. Therefore $p_u$ and $p_v$ are vertex-independent. Similarly
we can prove for other pairs. 

To prove (b), assume that $V(H)=V(G)$. From Claim (a), we see that $d_H(v)=d_G(v)=2$ for
$v\in V(G)$, $v\not\in \{x,y\}$. It can be seen that to satisfy
(\eqref{eq:handshake}), we must have $G=H+xy$. In this case,  it is readily
seen that $G\cong T_{r,2}$ and $3r+2=n$. This proves (b).

To prove (c), assume that $V(H)\neq V(G)$. Let $a\in V(G)\backslash V(H)$. By Lemma
\ref{lem:fan}, there exist two $a$-$V(H)$ paths $p_1$ and $p_2$ in $G$ such
that $p_1\cap p_2=\{a\}$. Since $d_G(v)=2$ for all $v\in V(H)\backslash
\{x,y\}$, we conclude that the paths $p_1$ and $p_2$ meet $H$ in vertices
$x$ and $y$. Without loss, let $p_1$ be an $a$-$x$ path and $p_2$ be an
$a$-$y$ path. Then $p=p_1\cup p_2$ is an $x$-$y$ path. Clearly $p$ is
vertex-independent to $p_u,p_v$ and $p_w$. Let $H_1=H\cup p$. Then we
observe that $d_{H_1}(v)=2$ for $v\in V(H_1)\backslash \{x,y\}$ and
$d_{H_1}(x)=d_{H_1}(y)=4$. From \eqref{eq:handshake}, it follows that
$d_{H_1}(v)=d_G(v)$ for all $v\in V(H_1)$. Since $G$ is connected, we have
$G=H_1$. It can be seen that $G\cong T_{r,s}$ in this case, where $s$ is
the number of vertices in $p$. Further we have, $3r+s=n$. This completes the proof of the theorem.
\end{proof}
\end{document}